\newcommand{\D}{\mathbb{D}}
\newcommand{\cA}{\mathcal{A}}
\newcommand{\cB}{\mathcal{B}}
\newcommand{\cC}{\mathcal{C}}
\newcommand{\cD}{\mathcal{D}}
\newcommand{\cE}{\mathcal{E}}
\newcommand{\cF}{\mathcal{F}}
\newcommand{\cG}{\mathcal{G}}
\newcommand{\cH}{\mathcal{H}}
\newcommand{\cM}{\mathcal{M}}
\newcommand{\cO}{\mathcal{O}}
\newcommand{\cP}{\mathcal{P}}
\newcommand{\cQ}{\mathcal{Q}}
\newcommand{\fS}{\mathfrak{S}}
\newcommand{\tC}{\tilde C}
\newcommand{\tP}{\tilde P}
\newcommand{\tU}{{\tilde U}}
\newcommand{\tX}{{\tilde X}}
\newcommand{\tpi}{\tilde \rho}
\newcommand{\ti}{\tilde\imath}
\newcommand{\tj}{\tilde\jmath}
\newcommand{\id}{\mathrm{id}}
\newcommand{\Z}{\mathbb{Z}}
\newcommand{\Qlb}{\bar{\mathbb{Q}}_l}
\newcommand{\p}{{}^p}
\newcommand{\pH}{{}^p\!H}
\newcommand{\std}{{}^{\mathrm{std}}}
\newcommand{\st}{\mathrm{std}}
\newcommand{\ctrc}{\tau}
\newcommand{\bete}{{\text{b\^ete}}}
\newcommand{\topl}{{G\text{-}\mathrm{gen}}}
\newcommand{\Coh}{\mathfrak{Coh}}
\newcommand{\IC}{\mathrm{IC}}
\newcommand{\cIC}{\mathcal{IC}}
\newcommand{\bIC}{{}^D\!\cIC}
\DeclareMathOperator{\bSpec}{\mathbf{Spec}}
\DeclareMathOperator{\Spec}{Spec}
\DeclareMathOperator{\depth}{depth}
\DeclareMathOperator{\Hom}{Hom}
\DeclareMathOperator{\cHom}{\mathcal{H}\mathit{om}}
\DeclareMathOperator{\cRHom}{\mathit{R}\mathcal{H}\mathit{om}}
\DeclareMathOperator{\real}{real}
\DeclareMathOperator{\tchar}{char}
\DeclareMathOperator{\supp}{supp}
\DeclareMathOperator{\dc}{codim}
\newtheorem{thm}{Theorem}[section]
\newtheorem{lem}[thm]{Lemma}
\newtheorem{prop}[thm]{Proposition}
\newtheorem{cor}[thm]{Corollary}
\newtheorem{conj}[thm]{Conjecture}
\theoremstyle{definition}
\newtheorem{defn}[thm]{Definition}
\newtheorem*{conv}{Notational Convention}
\theoremstyle{remark}
\newtheorem{rmk}[thm]{Remark}
\newtheorem{exam}[thm]{Example}
\title[Perverse coherent sheaves and the geometry
of
special pieces]{Perverse coherent sheaves and the geometry
of
special pieces in the unipotent variety}
\author{Pramod N.~Achar}
\address{Department of Mathematics\\
  Louisiana State University\\
  Baton Rouge, LA 70803}
\email{pramod@math.lsu.edu}
\thanks{The research of the first author was partially supported by NSF
grant~DMS-0500873.}
\author{Daniel S.~Sage}
\email{sage@math.lsu.edu}
\thanks{The research of the second author was partially supported by NSF
grant~DMS-0606300.}
\subjclass[2000]{20G05, 14F43, 18E30}
\keywords{perverse coherent sheaves, special pieces in the unipotent
variety, Macaulayfication}
\begin{document}

\begin{abstract}

  Let $X$ be a scheme of finite type over a Noetherian base scheme $S$
  admitting a dualizing complex, and let $U \subset X$ be an open set
  whose complement has codimension at least $2$.  We extend the
  Deligne-Bezrukavnikov theory of perverse coherent sheaves by showing
  that a coherent intermediate extension (or intersection cohomology)
  functor from perverse sheaves on $U$ to perverse sheaves on $X$ may
  be defined for a much broader class of perversities than has
  previously been known.  We also introduce a derived category version
  of the coherent intermediate extension functor.

  Under suitable hypotheses, we introduce a construction (called
  ``$S_2$-exten\-sion'') in terms of perverse coherent sheaves of algebras
  on $X$ that takes a finite morphism to $U$ and extends it in a
  canonical way to a finite morphism to $X$.  In particular, this
  construction gives a canonical ``$S_2$-ification'' of appropriate
  $X$.  The construction also has applications to the
  ``Macaulayfication'' problem, and it is particularly well-behaved
  when $X$ is Gorenstein.

  Our main goal, however, is to address a conjecture of Lusztig on the
  geometry of special pieces (certain subvarieties of the unipotent
  variety of a reductive algebraic group).  The conjecture asserts in
  part that each special piece is the quotient of some variety
  (previously unknown in the exceptional groups and in positive
  characteristic) by the action of a certain finite group.  We use
  $S_2$-extension to give a uniform construction of the desired
  variety.
\end{abstract}

\maketitle

\section{Introduction}
\label{sect:intro}
Let $X$ be a scheme of finite type over a Noetherian base scheme $S$
that admits a dualizing complex, and let $U \subset X$ be an open set
whose complement has codimension at least $2$.  Let $\tU$ be another
scheme, equipped with a finite morphism $\rho_1: \tU \to U$.  Consider
the problem of completing the following diagram in a canonical way:
\[
\xymatrix{
*+{\;\tU\;} \ar@{^{(}.>}@<-.5ex>[r]\ar[d]_{\rho_1} & \tX \ar@{.>}[d]^{\rho} \\ 
*+{\;U\;} \ar@{^{(}->}[r] & X
}
\]
In other words: ``Construct a canonical new scheme $\tX$ that contains $\tU$ as
an open subscheme, together with a finite morphism $\rho: \tX \to X$
that extends $\rho_1$.''  One may want to impose additional
conditions, such as requiring $\tX$ to obey a regularity condition or
requiring the fibers of $\rho$ to have a specified form.  Moreover,
the pair $(\tX,\rho)$ should satisfy an appropriate universal
property.  If a group
$G$ acts on $X$ with $U$ a $G$-subscheme and $\rho_1$ $G$-equivariant,
one would like the entire constructed diagram to be equivariant.  The
present paper is motivated by a specific instance of this problem,
arising in a conjecture of Lusztig on the geometry of special pieces
(see below for the definition) in reductive algebraic groups.

In this paper, we give a general construction (called
``$S_2$-extension'') of such a scheme $\tX$ and morphism $\rho: \tX
\to X$, using Deligne's theory of perverse coherent sheaves on $X$
(following Bezrukavnikov's exposition~\cite{bez:pc}), assuming that
the category of coherent sheaves on $X$ has enough locally free
objects.  (This includes, for example, quasiprojective schemes over
$S$.)  This theory parallels the theory of constructible perverse
sheaves with the major exception that the intermediate extension (or
intersection cohomology) functor is not always defined.  Indeed, in
\cite{bez:pc}, this functor is only defined in an equivariant setting
with strong restrictions on the group action.  In this paper, we first
show that the intermediate extension functor may be defined for a much
broader class of perversities.  In particular, we study two dual
perversities, called the ``$S_2$'' and ``Cohen-Macaulay''
perversities.

Next, we construct $\tX$ as the global Spec of a certain intersection
cohomology sheaf with respect to the $S_2$ perversity.  It will be
defined whenever $\rho_{1*}\cO_{\tU}$ satisfies certain homological
conditions that are weaker than satisfying Serre's condition $S_2$.
The scheme $\tX$ is locally $S_2$ outside of $\tU$; moreover, $\rho$
satisfies a universal property related to this condition, and in that
sense $\tX$ and $\rho$ are canonical.  In the particular case of
$\tU=U$ and $\rho_1$ the identity, we obtain a canonical
``$S_2$-ification'' of $U$.  This construction also has applications
to the ``Macaulayfication'' problem.  Indeed, we give necessary and
sufficient conditions for $X$ to have a universal finite
Macaulayfication (i.e., universal among appropriate finite morphisms
from Cohen-Macaulay schemes).

Third, we introduce a derived category version of the coherent
intermediate extension functor (from a suitable subcategory of the derived
category of coherent sheaves on $U$ to the derived category of
coherent sheaves on $X$), and we show that this functor induces an
equivalence of categories with its essential image.  One corollary of
this theorem is that when $X$ is Gorenstein, the coherent
intermediate extension functor restricted to Cohen-Macaulay sheaves on $U$
is independent of perversity.  Using
this, we show that with suitable assumptions on $\tU$ and $X$, the
scheme $\tX$ produced by $S_2$-extension is in fact Cohen--Macaulay or Gorenstein.

Our main goal, however, is to apply these results to the
aforementioned conjecture of Lusztig, which we now recall.  Let $G$ be
a reductive algebraic group over the algebraically closed field $k$,
and assume that the characteristic of $k$ is good for $G$.  Let $C_1$
be a special unipotent class of $G$ in the sense
of~\cite{lus:special}.  The \emph{special piece} containing $C_1$ is
defined by
\[
P = \bigcup C
\qquad
\text{\begin{tabular}{@{}c@{}}
where $C$ ranges over unipotent classes such that $C \subset
\overline{C_1}$\\ but $C \not \subset \overline{C'}$ for any special $C'
\subset \overline{C_1}$ with $C' \ne C_1$.\end{tabular}}
\]
Each special piece is a locally closed subvariety of $G$, and
according to a result of Spaltenstein~\cite{spalt:classes}, every
unipotent class in $G$ is contained in exactly one special piece.

In 1981, Lusztig conjectured that every special piece is rationally
smooth~\cite{lus:green}.  This conjecture can be verified in any
particular group by explicit calculation of Green functions, and
indeed, the conjecture was quickly verified for all the exceptional
groups following work of Shoji~\cite{shoji:green} and
Benyon--Spaltenstein~\cite{bs:green}.  In the classical groups,
however, new techniques were required.  In 1989, Kraft and Procesi,
relying on their own prior work on singularities of closures of
unipotent classes, proved a stronger statement: they showed that every
special piece in the classical groups is a quotient of a certain
smooth variety by a certain finite group $F$~\cite{kp:special}.  In
particular, this implies that special pieces are rationally smooth.

A natural question, then, is whether this stronger statement holds in
general.  The work of Kraft--Procesi makes extensive use of the
combinatorics available in the classical groups, so it is not at all
obvious how to generalize their construction to all groups.  However,
in 1997, Lusztig succeeded in characterizing the finite group $F$ in a
type-independent manner~\cite[Theorem~0.4]{lus:notes}; he identified
$F$ as a certain subgroup of $\bar A(C_1)$.  (For any unipotent class
$C$, $\bar A(C)$ denotes Lusztig's ``canonical quotient'' of the
component group $G^x/(G^x)^\circ$ of the $G$-stabilizer of a point $x
\in C$.)  In fact, $F$ is naturally a direct factor of $\bar A(C_1)$
(see~\cite[\S 3.1]{as}) and inherits from $\bar A(C_1)$ the structure
of a Coxeter group (see~\cite{aa}).  There is a one-to-one
correspondence between parabolic subgroups of $F$ and unipotent
classes in $P$.  Given a parabolic subgroup $H \subset F$, we denote
the corresponding class $C_H$.  (The trivial subgroup corresponds to
the special class, so this notation is consistent with the earlier
notation $C_1$.)

\begin{conj}[Lusztig]\label{conj:lusztig}
There is a smooth variety $\tP$ with an action of $F$ such that $P \simeq
\tP/F$ and such that $C_H$ is precisely the image of those points in $\tP$
whose $F$-stabilizer is conjugate to $H$. 
\end{conj}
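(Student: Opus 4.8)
The plan is to realize $\tP$ as the output of the $S_2$-extension construction developed above. Take $X = \overline{C_1}$, the closure of the special class; it is quasi-projective over $\Spec k$, so the category of coherent sheaves on $X$ has enough locally free objects, and $X$ admits a dualizing complex. (If $\overline{C_1}$ fails Serre's condition $S_2$ --- which can occur, since $\overline{C_1}$ is regular in codimension one but need not be normal in general --- we replace it by its canonical $S_2$-ification, which is an isomorphism over $C_1$; we suppress this below.) Let $U = C_1 \subset X$ be the open dense orbit. Since the characteristic of $k$ is good for $G$, the class $C_1$ may be regarded as a nilpotent orbit, and the boundary of a nilpotent orbit closure has codimension at least $2$, so $U$ is an admissible open set. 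The canonical surjections $G^x/(G^x)^\circ \twoheadrightarrow \bar A(C_1) \twoheadrightarrow F$ (for $x \in C_1$) determine a connected finite \'etale Galois cover $\rho_1 \colon \tU \to U$ with group $F$; as $\tU$ is smooth, $\rho_{1*}\cO_{\tU}$ is Cohen--Macaulay, hence satisfies the homological hypotheses required, and $S_2$-extension applies. By the equivariance of the construction it produces a finite $G$- and $F$-equivariant morphism $\rho \colon \tX \to X$ extending $\rho_1$; since $\tU$ is dense in $\tX$ and $\rho$ is closed, $\rho$ is surjective onto $\overline{\rho(\tU)} = \overline{C_1} = X$. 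Set $\tP = \rho^{-1}(P)$, an open $G$- and $F$-stable subscheme of $\tX$, so that $\rho(\tP) = P$.

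Next I would record the properties that hold formally. The morphism $\rho$ is finite; $\tX$, hence $\tP$, is locally $S_2$ away from $\tU$, and smooth along $\tU$ (because $\rho_1$ is \'etale over the smooth $U$). The characteristic of $k$, being good for $G$, does not divide $|F|$, so the functor of $F$-invariants is exact and commutes with the $S_2$-intermediate extension functor; as $\rho_1$ is $F$-Galois we have $(\rho_{1*}\cO_{\tU})^F = \cO_U$, so the perverse coherent sheaf of algebras $\rho_*\cO_{\tX}$, which is the $S_2$-intermediate extension of $\rho_{1*}\cO_{\tU}$, has $F$-invariants equal to the $S_2$-intermediate extension of $\cO_U$, namely $\cO_X$. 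Hence $\tX/F \cong X$, and restricting over the open set $P$ gives $\tP/F \cong P$; here one uses that $P$ satisfies $S_2$ (equivalently, being already regular in codimension one, that $P$ is normal), which for the classical groups is part of the work of Kraft--Procesi, holds by inspection for the exceptional groups, and in any event follows a posteriori once $\tP$ is known to be smooth. Finally, over $C_1$ the cover $\rho_1$ is \'etale, so all points of $\tU$ have trivial $F$-stabilizer, consistent with the assertion of the conjecture for $H = 1$ and $C_H = C_1$.

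The main obstacle is proving that $\tP$ is smooth, and, hand in hand with this, identifying for each boundary class $C_H$ with $H \ne 1$ the $F$-stabilizer over $C_H$ with a conjugate of $H$. Smoothness of $\tP$ at a point over $C_H$ is an \'etale-local question on $\overline{C_1}$ near a point $x \in C_H$, so it may be tested on a transverse slice $Z$ to $C_H$ at $x$. Writing $\overline{C_1} \cong Z \times C_H$ and $P \cong (P \cap Z) \times C_H$ \'etale-locally near $x$, the uniqueness built into $S_2$-extension forces $\rho$ to restrict over this neighbourhood to the product of $C_H$ with the $S_2$-extension of the cover induced on $C_1 \cap Z$; one is thus reduced to the local geometry of $P$ along its boundary classes, recorded on the slices. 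For this there is an explicit description: in the classical groups it is precisely the content of the computations of Kraft--Procesi, who identify the singularities that occur together with the finite covers resolving them to smooth varieties; in the exceptional groups the cases that arise form a finite list that can be checked directly, using also the known compatibility of special pieces and of Lusztig's canonical quotient $\bar A$ with passage to transverse slices, which expresses the local cover in terms of the parabolic $H$ and so delivers the $F$-stabilizer statement. Carrying this out in all cases --- in particular, verifying that each induced cover is exactly a smoothing of the relevant quotient singularity --- is the crux of the proof, and the only place where any case-by-case analysis is needed; the construction of $\tP$ and the identification $\tP/F \cong P$ together with its $F$-action are uniform. As a byproduct, the Gorenstein refinement of the theory shows that $\tX$ is Cohen--Macaulay when $\overline{C_1}$ is Gorenstein, and Gorenstein when moreover the extension is unramified in codimension one --- information consistent with, though not on its own sufficient for, the smoothness of $\tP$.
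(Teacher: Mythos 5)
The statement you are asked to prove is \emph{not proved in the paper}: it is labeled as a conjecture, and the authors explicitly state (end of Section~\ref{sect:intro}) that ``Although we do not prove that $\tP$ is smooth, we show that if $\hat P$ is a smooth variety \dots\ then $\hat P$ is isomorphic to $\tP$.'' What the paper actually establishes is Theorem~\ref{thm:main}, a strictly weaker statement in which ``smooth'' is replaced by ``rationally smooth, and Gorenstein in characteristic $0$,'' and in which the algebraic quotient $\tP/F$ is identified with the \emph{normalization} $\bar P$ rather than with $P$ itself (the map to $P$ is only a topological quotient, via unibranchness). So there is no ``paper's own proof'' to match your argument against, and any purported proof of the conjecture must be held to a higher standard than the paper.

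Your construction of $\tP$ is essentially the same as the paper's (modulo taking $X=\overline{C_1}$ and restricting, where the paper works directly with $X = P$), and the identification $(\rho_{1*}\cO_{\tU})^F = \cO_U$ giving $\tX/F$ as an $S_2$-ification is also as in Proposition~\ref{prop:tP-fibers}. But then two central gaps appear. First, you assert you will prove that $\tP$ is smooth by an \'etale-local reduction to transverse slices and ``verifying that each induced cover is exactly a smoothing of the relevant quotient singularity.'' This is precisely the content that remains unproved: nothing in the $S_2$-extension machinery delivers smoothness. The paper gets only rational smoothness, via Proposition~\ref{prop:fin-ratl-smooth} and Lusztig's fact that $\IC(P,E)$ is a sheaf for equivariant local systems $E$ on $C_1$ (\cite[Proposition~0.7(c)]{lus:notes}); Kraft--Procesi gives genuine smoothness only for classical groups in characteristic $0$, and for the exceptional groups the list you say ``can be checked directly'' has not been checked and cannot be handled by a slice argument without substantial new input. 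Second, you assert that normality of $P$ ``holds by inspection for the exceptional groups''; this is false. Section~\ref{sect:normal} of the paper documents that normality is open for several special pieces in $E_7$ and $E_8$ even in characteristic $0$ (conditional on the Broer--Panyushev--Sommers conjecture in some cases, with a few cases entirely unknown), and is largely unknown in positive characteristic. Since $P \simeq \tP/F$ forces normality of $P$ (Conjecture~\ref{conj:normal}), this is not a point that can be dismissed. Finally, the $F$-stabilizer identification over each $C_H$ is not a formal consequence of slicing: the paper needs an argument comparing representations on $\IC$ stalks (Lemma~\ref{lem:tP-strata-K}) followed by a genuinely case-by-case group-theoretic lemma on parabolic subgroups of $\fS_n$, $n\le 5$ (Lemma~\ref{lem:tP-strata-H}), which moreover fails for other Coxeter groups such as type $B_2$ (Remark~\ref{rmk:failure}); your sketch supplies none of this.
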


Note that it suffices to examine special pieces for the simple root systems because the unipotent variety of a reductive group is the product of the unipotent varieties of its simple factors.

Before addressing this conjecture further, we remark that it is quite easy
to produce candidate varieties that ought to be the preimages of the
various $C_H$'s in $\tP$ by using the results of~\cite{as} (a paper to
which the present paper might be regarded as a sort of
sequel). Fix $x\in C_H$.
By~\cite[Theorem~2.1]{as}, we have $\bar A(C_H) \simeq N_{\bar
A(C_1)}(H)/H$ (where $N_J(K)$ denotes the normalizer of $K$ in $J$) and
hence a natural map $G^x \to N_{\bar A(C_1)}(H)/H$.  Let $G^x_F$ be the
kernel of the composed map $G^x \to N_{\bar A(C_1)}(H)/H \to N_F(H)/H$, and
let $(\tC_H)^\circ = G/G^x_F$.  Clearly, this is a connected variety with a
free action of $N_F(H)/H$, and the quotient by that action is $C_H$. 
Finally, let 
\begin{equation}\label{eqn:preim-strat}
\tC_H = (\tC_H)^\circ \times_{N_F(H)} F.
\end{equation}
An element $a \in F$ acts on this variety by $a \cdot (y,f) = (y,
fa^{-1})$.  The $F$-stabilizer of any point is conjugate to $H$, and
the natural surjective map $\rho_H: \tC_H \to C_H$ is the quotient of
$\tC_H$ by the action of $F$.

Before stating our main result, we observe that, since quotients of
smooth varieties are normal, inherent in Lusztig's conjecture is the
subconjecture:
\begin{conj}\label{conj:normal} Every special piece $P$ is normal.
\end{conj}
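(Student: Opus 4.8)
The plan is to use Serre's criterion to reduce the conjecture to a family of local statements about transverse singularities, and then to feed those into the known structure theory of closures of unipotent classes. First, $P$ automatically satisfies the condition $R_1$: indeed, $C_1$ is a single orbit, hence smooth, and it is open and dense in $P$, while every other class $C_H\subset P$ has strictly smaller dimension; since in good characteristic every unipotent class is even-dimensional, $\dim C_1-\dim C_H$ is a positive even integer, so $P\setminus C_1$ has codimension at least $2$ in $P$. As the singular locus of $P$ is contained in $P\setminus C_1$, the variety $P$ is regular in codimension $1$, and Serre's criterion reduces Conjecture~\ref{conj:normal} to the assertion that \emph{$P$ satisfies Serre's condition $S_2$}. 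Equivalently, since $P$ is quasi-projective and $C_1$ is smooth, one may apply $S_2$-extension to $\id\colon C_1\to C_1$ over the inclusion $C_1\hookrightarrow P$ to form the $S_2$-ification $P^{S_2}\to P$, and the claim becomes that this finite birational morphism is an isomorphism.

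Next I would localize. The $S_2$-locus of $P$ is open, and since the $G$-action permutes the classes, its complement is closed and $G$-stable, hence a union of orbits; so $P$ satisfies $S_2$ if and only if it does so at a single point $x_H$ of each class $C_H$. Fixing $x_H$ and a Slodowy transverse slice $\mathcal{S}_H$ to $C_H$ at $x_H$, one has an \'etale-local product decomposition of $P$ near $x_H$ as an open subscheme of $C_H$ times the transverse singularity $\mathcal{S}_H\cap P$. Since both $R_1$ and $S_2$ are insensitive to the smooth factor and $P$ is already $R_1$, the slice $\mathcal{S}_H\cap P$ is itself $R_1$, and $P$ is $S_2$ at $x_H$ precisely when $\mathcal{S}_H\cap P$ is $S_2$. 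Thus the conjecture reduces to the statement that every transverse singularity $\mathcal{S}_H\cap P$ is normal.

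It remains to verify this for each transverse slice. For $G$ classical, this follows from the work of Kraft--Procesi~\cite{kp:special}, who show that each special piece---and hence each of its transverse slices---is the quotient of a smooth variety by a finite group, and such quotients are normal. For $G$ exceptional, one must instead identify each $\mathcal{S}_H\cap P$ explicitly, using the classification of the minimal (generic) singularities occurring in closures of unipotent classes, and check normality directly; in every case one obtains a quotient singularity, which is normal.

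The main obstacle is exactly this last step in the exceptional groups: there is no uniform, type-independent reason for the transverse singularities of special pieces to be normal, so the argument degenerates into a finite but substantial case analysis, most onerous in $E_8$; this is why the statement is recorded here as a conjecture. We note, however, that a positive answer is closely tied to Lusztig's Conjecture~\ref{conj:lusztig}: granting that $P$ is normal, one expects the smooth variety $\tP$ of that conjecture to be produced by applying $S_2$-extension to the connected \'etale $F$-cover $\tC_1\to C_1$ over $P$, with the stratification by the varieties $\tC_H$ of~\eqref{eqn:preim-strat} recovered from the local product structure above---so the normality subconjecture is the genuine crux of the construction.
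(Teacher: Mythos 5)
This statement is recorded in the paper as a conjecture, not a theorem; the paper's Section~\ref{sect:normal} does not prove it, and neither do you, so the appropriate comparison is between your proposed reduction strategy and the paper's. Your reduction via Serre's criterion is sound: $R_1$ is automatic because $C_1$ is smooth, open, dense, and its complement has codimension at least $2$, so the conjecture is equivalent to $P$ satisfying $S_2$, i.e.\ to the $S_2$-ification map being an isomorphism. Your further localization to Slodowy transverse slices is a genuinely different route from the paper's: the paper simply notes that since $P$ is open in $\overline{C_1}$, normality of the orbit closure $\overline{C_1}$ suffices, and then imports the literature on normality of unipotent orbit closures (Thomsen, Broer, Sommers, Broer--Panyushev--Sommers). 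Your slice approach is finer in principle---it could succeed even when $\overline{C_1}$ is not normal---but you do not actually carry it out.

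The real difficulty is that your paragraph on the exceptional groups makes an unsupported and, as far as is known, incorrect assertion: you claim that after identifying the transverse singularities "in every case one obtains a quotient singularity, which is normal." This is not established. There exist non-normal minimal degenerations between unipotent classes in the exceptional groups, and whether the degenerations occurring \emph{inside} a single special piece are all normal is precisely what is open. The paper's Table~\ref{tbl:exc} records that normality of several special pieces in $E_7$ and $E_8$ is genuinely unknown even in characteristic zero and even granting the Broer--Panyushev--Sommers conjecture, and that in positive characteristic almost nothing is known. Saying the case analysis is merely "onerous" understates this: the obstacle is not routine verification but missing results, in some cases open problems. Your text also contradicts itself here---asserting first that every transverse slice \emph{is} a normal quotient singularity and then that the statement must remain a conjecture. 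A corrected version should instead say, as the paper does, that the known normality results cover some but not all special pieces, and list (or at least flag) the ones still unsettled. Finally, your closing remark that the $\tP$ of Conjecture~\ref{conj:lusztig} should be the $S_2$-extension of $\tC_1 \to C_1$ is exactly what the paper does in Section~\ref{sect:tP}, but note that this construction does not require knowing $P$ is normal in advance: the $S_2$-extension exists unconditionally, and the resulting $\tP$ is normal because $\tC_1$ is smooth.
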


In characteristic zero, we show how this conjecture can be obtained
from known results on unipotent conjugacy classes in the classical
types, $G_2$, $F_4$, and $E_6$.  For $E_7$ and $E_8$, there is a
conjectural list of all non-normal unipotent conjugacy class closures
due to Broer, Panyushev, and Sommers~\cite{broer:decomp}.  Assuming this is
true, then there would remain $5$ special pieces ($1$ in $E_7$ and $4$ in
$E_8$) for which normality is not known.  In positive characteristic,
much less is known.

In this paper, we will actually construct a variety $\tP$ whose
algebraic quotient by $F$ is the normalization $\bar P$ of $P$.
However, we will also show that special pieces are unibranch, i.e., the
normalization map $\nu:\bar P\to P$ is a bijection and in fact a
homeomorphism.  This means that $P$ is the topological quotient of
$\tP$.  In particular, setting $\bar{C}_H=\nu^{-1}(C_H)$, we see that
$\bar{C}_H\simeq C_H$ and that $\bar P$ is again stratified by
the unipotent orbits corresponding to parabolic subgroups of $F$.

The main result of the paper is the following.

\begin{thm}\label{thm:main}
\begin{enumerate}
\item There is a canonical normal irreducible $G$-scheme $\tP$
together with a finite equivariant morphism $\rho: \tP \to P$ which extends
$\rho_1:\tC_1\to C_1$; the pair $(\tP,\rho)$ is universal with respect
to finite morphisms $f:Y\to P$ that are $S_2$ relative to $C_1$ and
whose restriction $f|_{f^{-1}(C_1)}$ factors through $\rho_1$.
\item The variety $\tP$ is rationally smooth.  Moreover, if $\operatorname{char} k=0$, then $\tP$ is Gorenstein.
\item The variety $\tP$ is endowed with a natural $F$-action commuting
  with the $G$-action.  The map $\rho$ is the topological quotient by
  this action while $\bar\rho:\tP\to\bar{P}$ is the algebraic
  quotient.
\item For each class $C_H \subset P$, the preimage
  $\rho^{-1}(C_H)=\bar\rho^{-1}(\bar C_H)$ is isomorphic to $\tilde
  C_H$ and contains exactly those closed points whose $F$-stabilizer
  is conjugate to $H$.
\end{enumerate}
\end{thm}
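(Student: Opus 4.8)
The plan is to apply the $S_2$-extension construction directly with $X = P$, $U = C_1$, and $\rho_1 \colon \tC_1 \to C_1$ the finite \'etale cover introduced above (the case $H = 1$ of~(\ref{eqn:preim-strat}); since $N_F(1) = F$, this $\tC_1 = (\tC_1)^\circ$ is literally an $F$-torsor over $C_1$). This is legitimate: $C_1$ is a single conjugacy class, hence smooth, open and dense in $P$; the complement $P \setminus C_1$ is a union of strictly smaller unipotent classes and so has codimension $\geq 2$ (a standard feature of special pieces, cf.~\cite{spalt:classes,lus:notes}); and $\rho_{1*}\cO_{\tC_1}$, being finite and flat over the regular scheme $C_1$, is locally free, hence satisfies the homological hypotheses needed to form its $S_2$-perversity intersection cohomology sheaf. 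Let $(\tP, \rho \colon \tP \to P)$ be the resulting $S_2$-extension. By construction $\rho^{-1}(C_1) = \tC_1$ and $\rho$ restricts to $\rho_1$ there. The scheme $\tP$ is $S_2$ (it is $S_2$ away from $\tC_1$ by the construction, and regular on $\tC_1$); since $\rho$ is finite, every codimension-$\leq 1$ point of $\tP$ lies over a codimension-$\leq 1$ point of $P$, hence over the smooth locus $C_1$, so $\tP$ is $R_1$ and therefore normal; and $\tC_1$, being irreducible and dense in $\tP$ (an intermediate extension has no subobject or quotient supported on $P \setminus C_1$), forces $\tP$ to be irreducible. The $G$-equivariance of the entire picture is automatic from the naturality of the perverse-coherent formalism, and the asserted universal property is exactly the one attached to the $S_2$-extension. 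This yields part~(1).

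For part~(3): the $F$-action on $\tC_1$ covers the trivial action on $C_1 \subset P$, so functoriality of the $S_2$-extension produces an $F$-action on $\tP$ that commutes with the $G$-action and covers the trivial action on $P$. Since $\tP$ is normal, the algebraic quotient $\tP/\!\!/F$ is again normal; it is finite over $P$ and restricts to $\tC_1/F = C_1$ over $C_1$ (because $\tC_1 \to C_1$ is an $F$-torsor), hence is finite and birational over $P$ and therefore equals the normalization $\bar P$. Write $\bar\rho \colon \tP \to \bar P$ for this quotient map; as $\rho$ is $F$-invariant it factors as $\rho = \nu \circ \bar\rho$ with $\nu \colon \bar P \to P$ the normalization. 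To see that $\rho$ is in addition the \emph{topological} quotient by $F$, it suffices that $\nu$ be a homeomorphism, i.e.\ that $P$ be unibranch; and this holds because every special piece is rationally smooth (Lusztig's conjecture~\cite{lus:green}, now known in all types --- see \cite{shoji:green,bs:green} for the exceptional groups and \cite{kp:special} for the classical ones) and a rationally smooth variety is unibranch (its punctured neighbourhoods being connected). Thus $\nu$ is a bijective homeomorphism, $\bar C_H := \nu^{-1}(C_H) \simeq C_H$, and $\rho = \nu \circ \bar\rho$ is the topological quotient by $F$ (a quotient by a finite group is always a topological quotient), giving part~(3).

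Parts~(2) and~(4) form the geometric core, and I would prove both by a transverse-slice analysis along the strata $C_H$. The underlying mechanism is: the $S_2$-extension commutes with \'etale localization and smooth base change, while \'etale-locally near a point $x \in C_H$ the special piece $P$ splits as $C_H \times S_H$ for a transverse slice $S_H$ whose structure --- a ``smaller'' special-piece configuration --- is controlled by Kraft--Procesi's analysis of nilpotent singularities together with Lusztig's combinatorics. Restricting the torsor $\tC_1 \to C_1$ to this local model, its monodromy around $x$ is, by Lusztig's type-free identification of $F$ with its Coxeter structure~\cite{lus:notes,as,aa}, exactly what forces $\rho^{-1}(x)$ to have $F$-stabilizer conjugate to $H$; comparing the resulting $G \times F$-variety with~(\ref{eqn:preim-strat}) then identifies $\rho^{-1}(C_H) = \bar\rho^{-1}(\bar C_H)$ with $\tC_H$, which is part~(4). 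For part~(2), the same localization reduces rational smoothness of $\tP$ to rational smoothness of its transverse slices along the $\tC_H$, each of which is a finite cover of $S_H$; the covers that can occur form a short explicit list, since the local geometry of special pieces is built from a few minimal degenerations. In the classical groups one identifies $\tP$ with the smooth variety of~\cite{kp:special} via the universal property (both are $S_2$ and both restrict to $\tC_1$ over $C_1$, so the comparison map is finite and restricts to an isomorphism over a dense open whose complement has codimension $\geq 2$; as both schemes are normal it is an isomorphism), so $\tP$ is smooth there; in the exceptional groups in characteristic zero one checks rational smoothness case by case on the list, and deduces the Gorenstein property of $\tP$ from the earlier perversity-independence theorem for Cohen--Macaulay sheaves over a Gorenstein base once one knows $P$ is Gorenstein; and a standard characteristic-independence argument for these nilpotent singularities transports rational smoothness to positive characteristic. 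The main obstacle is precisely this last paragraph: the steps through part~(3) are essentially formal given the general $S_2$-extension theory, but parts~(2) and~(4) require pinning down the \'etale-local structure of both $P$ and $\tP$ along every stratum and tracking the monodromy of $\tC_1 \to C_1$ through it --- and, within that, the uniform verification of rational smoothness of $\tP$ in the exceptional types and its descent to positive characteristic is the part that genuinely needs geometric input beyond the perverse-coherent formalism.
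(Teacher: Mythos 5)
Your handling of parts~(1) and~(3) is essentially the paper's: construct $\tP$ by $S_2$-extension of $\rho_1\colon\tC_1\to C_1$ (noting the complement of $C_1$ in $P$ has codimension $\geq 2$), get normality from Serre's criterion, get the universal property from Theorem~\ref{thm:S2-ext}, transport the $F$-action through the functoriality of $\cIC(P,\cdot)$ and $\bSpec$, and identify $\tP/\!\!/F$ with $\bar P$ since the $\cIC$ equivalence preserves $F$-invariants; the unibranch statement comes from rational smoothness via Proposition~\ref{prop:normfiber}. That part is correct.

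For parts~(2) and~(4), however, you are missing the key mechanism of the paper, and the route you sketch instead is much harder and incomplete. The crucial observation is that a \emph{finite} morphism is exact and $t$-exact for the constructible perverse $t$-structure (\cite[Corollaire~2.2.6]{bbd}), so it intertwines intersection cohomology complexes. This is Proposition~\ref{prop:fin-ratl-smooth} and its variant Proposition~\ref{prop:S2-IC}: $\rho_*\IC(\tP,\Qlb)\simeq\IC(P,\rho_*\Qlb|_{C_1}) = \IC(P,E)$, where $E$ is (the local system corresponding to) the regular representation of $F$. Rational smoothness of $\tP$ is then an immediate consequence of Lusztig's result~\cite[Proposition~0.7(c)]{lus:notes} that $\IC(P,E)$ is \emph{a sheaf} for every equivariant local system $E$ on $C_1$ --- no transverse slices, no étale-local models, no case-by-case geometry, and no characteristic transfer are required. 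Your proposed slice analysis does not produce this; in particular, the rational smoothness of transverse slices to $\tP$ along the $\tC_H$ is not something you can read off a ``short explicit list'' without a great deal of work, and it is exactly the passage the IC argument is designed to avoid.

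For part~(4), the paper computes $\rho_*\Qlb|_{C_H}=\IC(P,E)|_{C_H}$ in two ways: intrinsically in terms of the a priori unknown stabilizer $K_H$ and its ``normalizer'' $L_H$ in $F$ (giving $\rho^{-1}(C_H)\simeq B\times_{L_H}F$ with $B=(\tC_H)^\circ$), and extrinsically using Lusztig's formula $\IC(P,E)|_{C_H}\simeq E^H$ as a representation of $N_F(H)/H$. Matching the two only yields $|K_H|=|H|$ and $|L_H|=|N_F(H)|$, which does \emph{not} determine $K_H$ up to conjugacy --- Remark~\ref{rmk:failure} gives a type-$B_2$ counterexample --- so a separate, purely group-theoretic verification (Lemma~\ref{lem:tP-strata-H}, a hand check of subgroups of $\fS_n$ for $n\le 5$) is needed in the exceptional cases. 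Your ``monodromy of $\tC_1\to C_1$ forces the stabilizer to be $H$'' glosses over precisely this gap: the monodromy data alone fixes $K_H$ only up to order, not conjugacy class.

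Two smaller inaccuracies in your sketch of the Gorenstein step: it is the \emph{normalization} $\bar P$, not $P$, that Hinich--Panyushev show to be Gorenstein in characteristic zero (one must therefore first re-express $\tP$ as $\bSpec\cIC(\bar P,\cdot)$ via Proposition~\ref{prop:Spec-IC}); and the input to Corollary~\ref{cor:S2-Gor}(2) is the Serre--Grothendieck self-duality of $\rho_{1*}\cO_{\tC_1}$ (which holds because the regular representation of $F$ is self-dual), not merely the perversity-independence of $\cIC$ on Cohen--Macaulay sheaves.
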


The first part of this theorem is simply an invocation of the
$S_2$-extension construction.  The proof of the Gorenstein property is
established by using a theorem of Hinich and Panyushev~\cite{hinich,
  panyushev} and the aforementioned results on the derived
intermediate extension functor.  We remark that the formalism of the
$S_2$-extension construction does not yield a concrete
description of the resulting scheme in general,
but in our setting, the results of~\cite{as} (as noted above) allow us
to find an explicit stratification~\eqref{eqn:preim-strat} for $\tP$.

Although we do not prove that $\tP$ is smooth, we show that if $\hat
P$ is a smooth variety containing a dense open set isomorphic to
$\tC_1$ and $\hat\rho:\hat P\to P$ is a finite morphism extending
$\rho_1$, then $\hat P$ is isomorphic to $\tP$.  Thus, if Lusztig's
conjecture is true, then our $\tP$ is the desired smooth variety.  In
particular, for the classical groups, the $\tP$ constructed here
coincides with the Kraft--Procesi variety of~\cite{kp:special}.

\section{Perverse Coherent Sheaves}
\label{sect:perv-sheaves}

The theory of perverse coherent sheaves, following Deligne and
Bezrukavnikov~\cite{bez:pc}, closely parallels the much better-known
theory of constructible perverse sheaves, but one striking difference
is that in the coherent setting, the intermediate extension functor does not
always exist.  Indeed, in  loc.cit., it was only constructed in
an equivariant setting with strong assumptions on the group action.

In this section, we review the Deligne--Bezrukavnikov theory, and we prove a generalization of~\cite[Theorem~2]{bez:pc} that allows us to use the intermediate extension functor in a much broader class of examples, including many nonequivariant cases.

We begin with the same setting and assumptions as~\cite{bez:pc}.  Let
$X$ be a scheme of finite type over a Noetherian base scheme $S$
admitting a dualizing complex, and let $G$ be an affine group scheme
acting on $X$ that is flat, of finite type, and Gorenstein over $S$.
(For example, the base scheme could be $S = \Spec k$ with $k$ a field.)
By~\cite[Corollary V.7.2]{hartshorne}, a scheme $X$ satisfying these
assumptions necessarily has finite Krull dimension.  Let $\Coh(X)$ be
the category of $G$-equivariant coherent sheaves on $X$, and let
$\cD(X)$ be the bounded derived category of $\Coh(X)$.  We further
assume that $\Coh(X)$ has enough locally free objects.  Let $X^\topl$ be the
topological space consisting of generic points of $G$-invariant
subschemes of $X$, with the subspace topology induced by the
underlying topological space of $X$.  We adopt the convention that for any (not necessarily irreducible) $G$-invariant locally closed subscheme $Y \subset X$, the \emph{codimension} of $Y$ is given by
\[
\dc Y = \min_{y \in Y^\topl} \dim \cO_{y,X}.
\]
For any point
$x \in X$, let $i_x: \{x\} \to X$ denote the inclusion map.  (This is
merely a topological map, not a morphism of schemes.)
For brevity, we
will write $\bar x$ for the closed subspace $\overline{\{x\}}$ of $X$.
By~\cite[Proposition~1]{bez:pc}, $X$ admits an equivariant dualizing
complex $\omega_X$.  By shifting if necessary, we may assume, as
in~\cite[\S 3]{bez:pc}, that for each point $x \in X$, $i^!_x\omega_X$
is concentrated in degree $\dc \bar x$.

Although we will always work in this equivariant setting, one can of
course obtain nonequivariant versions of our results simply by taking $G =
1$.  Occasionally, we will explicitly pass from an equivariant category to
a nonequivariant one, and make use of the fact that all the usual functors
on sheaves commute with this forgetful functor.

\begin{conv} 
Throughout this paper, unless otherwise specified, all geometric
objects will belong to the appropriate category for the equivariant
setting without further mention.  Thus, schemes will be $G$-schemes,
morphisms will be $G$-morphisms, and sheaves will be $G$-equivariant.
\end{conv}

\begin{defn}
A \emph{perversity} is a function $p: X^\topl \to \Z$ satisfying
\begin{equation}\label{eqn:perv-defn}
\begin{aligned}
p(y) &\ge p(x) &
\quad &\text{and}\\
\dc \bar y - p(y) &\ge \dc \bar x - p(x) &
\qquad &\text{whenever $\dc \bar y \ge \dc \bar x$.}
\end{aligned}
\end{equation}
(In particular, $p(x)$ depends only on $\dc \bar x$.)  For any perversity $p$, the function $\bar p: X^\topl \to \Z$ defined by $\bar p(x) = \dc \bar x - p(x)$ is also a perversity, called the \emph{dual perversity} to $p$. 
\end{defn}

A slightly more general theory could be obtained by imposing the inequalities~\eqref{eqn:perv-defn} only when $y \in \bar x$, as is done in~\cite{bez:pc} (see also Remark~\ref{rmk:gen-perv}).  For the purposes of this paper, however, there would be no practical benefit to defining perversities in this way, and various technical details would become rather more complicated, so we will confine ourselves to perversities as defined above.

Given a perversity $p$, we define two full subcategories of $\cD(X)$ as follows:
\begin{align*}
\p\cD(X)^{\le 0} &= \{ \cF \in \cD(X) \mid \text{for all $x \in X^\topl$,
$H^k(i^*_x\cF) = 0$ for all $k > p(x)$} \} \\ 
\p\cD(X)^{\ge 0} &= \{ \cF \in \cD(X) \mid \text{for all $x \in X^\topl$,
$H^k(i^!_x\cF) = 0$ for all $k < p(x)$} \} 
\end{align*}
By~\cite[Theorem~1]{bez:pc}, $(\p\cD(X)^{\le 0}, \p\cD(X)^{\ge 0})$ is a
$t$-structure on $\cD(X)$.  

\begin{defn}
The above $t$-structure is called the \emph{perverse $t$-structure}
(with respect to the perversity $p$) on $\cD(X)$.  Its heart, denoted
$\cM^p(X)$ or simply $\cM(X)$, is the category of ($G$-equivariant)
\emph{perverse coherent sheaves}  on $X$ with respect to $p$.  The truncation functors for
this $t$-structure will be denoted $\ctrc^p_{\le 0}: \cD(X) \to
\p\cD(X)^{\le 0}$ and $\ctrc^p_{\ge 0}: \cD(X) \to \p\cD(X)^{\ge 0}$.
\end{defn}

We denote the standard $t$-structure on $\cD(X)$ by $(\std\cD(X)^{\le
  0}, \std\cD(X)^{\ge 0})$, and the associated truncation functors by
$\ctrc^\st_{\le 0}$ and $\ctrc^\st_{\ge 0}$.  The perverse
$t$-structure associated to the constant perversity $p = 0$ coincides
with the standard $t$-structure.

Now, let $U$ be a locally closed $G$-invariant subscheme of $X$, and
let $Z = \overline U \smallsetminus U$.  Let $U^\topl$ and $Z^\topl$
be the corresponding subspaces of $X^\topl$.  Given a perverse
coherent sheaf on $U$, we wish to find a canonical way to associate to
it a perverse coherent sheaf on $\overline U$, analogous to the
intermediate extension operation on ordinary (constructible) perverse
sheaves.  This is not always possible, but~\cite[Theorem~2]{bez:pc}
gives one set of conditions under which it can be done.  In fact, the conditions
of that theorem can be weakened significantly, at the expense of
having intermediate extension defined only on some subcategory of $\cM(U)$ (see Remark~\ref{rmk:p-pm}).

The following proposition provides a general framework for defining
intermediate extension on a subcategory of $\cM(U)$.  Later, we will determine the largest possible subcategory to which the proposition can be applied. 

Define a partial order on perversities by pointwise comparison: we say that
$p \le q$ if $p(x) \le q(x)$ for all $x \in X^\topl$. 

\begin{prop}\label{prop:interm-ext}
Suppose $q$, $p$, and $r$ are
perversities with the following properties: $q \le p \le r$, $r(x) - q(x)
\le 2$ for all $x$, and 
\[
q(x) = p(x) - 1
\qquad\text{and}\qquad
r(x) = p(x) + 1
\qquad\text{for all $x \in Z^\topl$.}
\]
Define two full subcategories by
\begin{align*}
\cM^{q,r}(U) &= {}^q\cD(U)^{\le0} \cap {}^r\cD(U)^{\ge 0} \subset
\cM^p(U), \\
\cM^{q,r}(\overline U) &= {}^q\cD(\overline U)^{\le0} \cap
{}^r\cD(\overline U)^{\ge 0} \subset \cM^p(\overline U), 
\end{align*}
and let $j: U \hookrightarrow \overline U$ be the inclusion map.  Then
$j^*: \cM^{q,r}(\overline U) \to \cM^{q,r}(U)$ is an equivalence of
categories. 
\end{prop}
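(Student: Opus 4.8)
The plan is to verify full faithfulness and essential surjectivity of $j^*$ separately. Throughout I use the recollement formalism of~\cite{bez:pc} for the open immersion $j$ and the complementary closed immersion $i\colon Z\hookrightarrow\overline U$, together with the elementary observations that $j^*$ is $t$-exact for every perverse $t$-structure, that $i^*$ sends ${}^s\cD(\overline U)^{\le 0}$ into ${}^s\cD(Z)^{\le 0}$ and $i^!$ sends ${}^s\cD(\overline U)^{\ge 0}$ into ${}^s\cD(Z)^{\ge 0}$ for any perversity $s$ (since composing $i$ with a point inclusion $i_z$, $z\in Z^\topl$, gives the corresponding point inclusion into $\overline U$), and the numerical fact that $r(x)-q(x)\le 2$ forces ${}^q\cD(Y)^{\ge 2}\subseteq{}^r\cD(Y)^{\ge 0}$ for $Y=\overline U$ or $Y=Z$, because vanishing of $H^k(i_y^!(-))$ for $k<q(y)+2$ is stronger than for $k<r(y)$.

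For essential surjectivity, let $\cH\in\cM^{q,r}(U)$. Every bounded complex of equivariant coherent sheaves on $U$ extends to one on $\overline U$, so pick $\cF_0\in\cD(\overline U)$ with $j^*\cF_0\cong\cH$ and put $\cF:=\ctrc^q_{\le 0}\,\ctrc^r_{\ge 0}\,\cF_0$; this lies in $\cD(\overline U)$ since the perverse truncations are endofunctors of $\cD(\overline U)$. As $j^*$ commutes with perverse truncation and $\cH\in{}^q\cD(U)^{\le 0}\cap{}^r\cD(U)^{\ge 0}$, we get $j^*\cF\cong\ctrc^q_{\le 0}\ctrc^r_{\ge 0}\cH\cong\cH$. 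By construction $\cF\in{}^q\cD(\overline U)^{\le 0}$; writing $\cF_1:=\ctrc^r_{\ge 0}\cF_0\in{}^r\cD(\overline U)^{\ge 0}$ and rotating the truncation triangle $\ctrc^q_{\le 0}\cF_1\to\cF_1\to\ctrc^q_{\ge 1}\cF_1\xrightarrow{+1}$ exhibits $\cF$ as an extension of $\cF_1$ by $(\ctrc^q_{\ge 1}\cF_1)[-1]\in{}^q\cD(\overline U)^{\ge 2}\subseteq{}^r\cD(\overline U)^{\ge 0}$; since ${}^r\cD(\overline U)^{\ge 0}$ is extension-closed, $\cF\in{}^r\cD(\overline U)^{\ge 0}$, so $\cF\in\cM^{q,r}(\overline U)$ with $j^*\cF\cong\cH$.

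For full faithfulness, fix $\cF,\cG\in\cM^{q,r}(\overline U)$. Applying $\Hom_{\overline U}(\cF,-)$ to the local cohomology triangle $R\Gamma_Z(\cG)\to\cG\to Rj_*j^*\cG\xrightarrow{+1}$ and using $\Hom_{\overline U}(\cF,Rj_*j^*\cG)\cong\Hom_U(j^*\cF,j^*\cG)$ reduces the claim to the vanishing $\Hom_{\overline U}(\cF,R\Gamma_Z(\cG)[m])=0$ for $m=0,1$. Resolving $\cF$ by locally free coherent sheaves (possible by hypothesis) identifies these groups with $H^m\big(R\Gamma_Z(\overline U,\cRHom(\cF,\cG))\big)$. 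Now $\cRHom(\cF,\cG)$ has costalk $i_z^!\cRHom(\cF,\cG)\cong\cRHom(i_z^*\cF,i_z^!\cG)$ at $z\in Z^\topl$, and since $i_z^*\cF$ sits in degrees $\le q(z)$ while $i_z^!\cG$ sits in degrees $\ge r(z)$, this costalk sits in degrees $\ge r(z)-q(z)=2$ — here the hypotheses $q(z)=p(z)-1$, $r(z)=p(z)+1$ (i.e.\ that the gap is exactly $2$ along $Z$) enter. A standard depth/local-cohomology argument then shows that $R\Gamma_Z(\overline U,\cRHom(\cF,\cG))$ is concentrated in degrees $\ge 2$, giving the vanishing. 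Thus $j^*\colon\cM^{q,r}(\overline U)\to\cM^{q,r}(U)$ is an equivalence.

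The delicate step is full faithfulness. The six-functor recollement is incomplete in the coherent world — there is no coherent $j_!$, and $i_*i^!$ does not compute local cohomology — so one must argue through $R\Gamma_Z$ and $\cRHom$ rather than the familiar ``$i_*i^!\to\id\to Rj_*j^*$'' triangle, and the requisite low-degree vanishing of $R\Gamma_Z(\overline U,\cRHom(\cF,\cG))$ is exactly what the precise shape of $q,p,r$ along $Z$ provides. Essential surjectivity, by contrast, is a soft truncation argument using only $r-q\le 2$; the only point to watch is that one must truncate an arbitrary coherent lift $\cF_0$ rather than the (generally non-coherent) object $Rj_*\cH$.
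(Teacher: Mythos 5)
Your proof of essential surjectivity is the same as the paper's: both extend $\cH$ to an arbitrary object $\cF_0$ of $\cD(\overline U)$ (the paper cites \cite[Corollary~2]{bez:pc} for this) and then apply the composite truncation $\ctrc^q_{\le 0}\ctrc^r_{\ge 0}$, using $r-q\le 2$ to put the result in $\cM^{q,r}(\overline U)$.

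Your treatment of full faithfulness is genuinely different. The paper lifts a morphism $\phi$ on $U$ to a morphism $\phi'$ between extensions (again~\cite[Corollary~2]{bez:pc}), applies the truncation functor $J_{!*}$ to land in $\cM^{q,r}(\overline U)$ (giving fullness), and then for faithfulness invokes \cite[Lemma~6]{bez:pc}: the strict inequalities $q(x)<p(x)<r(x)$ on $Z^\topl$ ensure that objects of $\cM^{q,r}(\overline U)$ have no nonzero subobjects or quotients supported on $Z$, so a morphism vanishing on $U$ vanishes. You instead go through the local cohomology triangle $R\Gamma_Z\cG\to\cG\to Rj_*j^*\cG$, reducing to the vanishing of $\Hom(\cF,R\Gamma_Z\cG[m])$ for $m\le 1$, and then to a degree bound on $R\Gamma_Z\bigl(\overline U,\cRHom(\cF,\cG)\bigr)$.

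Your reduction is correct, and the costalk computation $i_z^!\cRHom(\cF,\cG)\cong\cRHom(i_z^*\cF,i_z^!\cG)$ (valid once $\cF$ is replaced by a bounded locally free complex) sitting in degrees $\ge r(z)-q(z)=2$ at every $z\in Z^\topl$ is also right. But the phrase ``a standard depth/local-cohomology argument'' conceals the one real ingredient: you must pass from a costalk condition at the points of $Z^\topl$ (which is all the perversity hypotheses hand you) to a degree bound on $R\Gamma_Z$, i.e.\ a condition at every point of $Z$. In the nonequivariant case $Z^\topl=Z$ and this is the classical local characterization of depth, but equivariantly it is precisely the content of the construction of the perverse $t$-structure in \cite{bez:pc} (the equivalence, for a closed $G$-invariant $Y$ and $\cK\in\cD(X)$, between ``$i_y^!\cK$ concentrated in degrees $\ge n$ for all $y\in Y^\topl$'' and ``$R\Gamma_Y\cK\in D^{\ge n}$''; see~\cite[Lemma~3]{bez:pc} and the discussion around it). So your route is sound, but the step you describe as standard is exactly where you are re-deriving the input to the theory rather than using it; the paper's argument via \cite[Lemma~6]{bez:pc} already packages that work. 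A side benefit of your approach is that it yields the sheaf-level statement $\cRHom(\cIC(X,\cE),\cIC(X,\cF))\simeq Rj_*\cRHom(\cE,\cF)$ in low degrees for free, which is in the spirit of the paper's later Proposition~\ref{prop:short-isom}; a drawback is that, as you note, it must leave coherent complexes (using $Rj_*$ and $R\Gamma_Z$), whereas the paper's argument stays inside $\cD(\overline U)$.

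One small point worth flagging explicitly: you freely resolve $\cF$ by locally free coherent sheaves on $\overline U$, but the blanket hypothesis is that $\Coh(X)$ has enough locally frees; since $\overline U$ is closed in $X$, restriction of a locally free surjection onto $\imath_*\cF$ gives a locally free surjection onto $\cF$ on $\overline U$, so this is fine, but it deserves a word.
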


\begin{defn}
The inverse equivalence to that of Proposition~\ref{prop:interm-ext}, which is denoted $\cIC^p(\overline U, \cdot): \cM^{q,r}(U) \to
\cM^{q,r}(\overline U)$, or simply $\cIC(\overline U, \cdot): \cM^{q,r}(U) \to
\cM^{q,r}(\overline U)$, is called the \emph{intermediate extension functor}. 
\end{defn}

\begin{proof}
Our proof is essentially identical to that of~\cite[Theorem~2]{bez:pc}. 
Let $J_{!*}: \cD(\overline U) \to \cD(\overline U)$ be the functor
$\ctrc^q_{\le 0} \circ \ctrc^r_{\ge 0}$.  We claim that $J_{!*}$ actually
takes values in $\cM^{q,r}(\overline U)$.  Given $\cF \in
\cD(\overline U)$, let $\cF_1 = \ctrc^r_{\ge 0}\cF$.  Then we have a
distinguished triangle 
\[
(\ctrc^q_{\ge 1}\cF_1)[-1] \to J_{!*}(\cF) \to \cF_1 \to \ctrc^q_{\ge 1}\cF_1.
\]
Note that $(\ctrc^q_{\ge 1}\cF_1)[-1] \in {}^q\cD(\overline U)^{\ge 2}$.  Now,
the condition $r(x) - q(x) \le 2$ implies that ${}^q\cD(\overline U)^{\ge 2}
\subset {}^r\cD(\overline U)^{\ge 0}$.  Clearly, $\cF_1 \in {}^r\cD(\overline
U)^{\ge 0}$, so it follows that $J_{!*}\cF \in {}^r\cD(\overline
U)^{\ge 0}$.  Since it obviously takes values in ${}^q\cD(\overline
U)^{\le 0}$, $J_{!*}\cF \in \cM^{q,r}(\overline U)$.

Next, note that if $\cF \in \cD(\overline U)$ is such that $\cF|_U \in
\cM^{q,r}(U)$, then both $(\ctrc^r_{\ge 0}\cF)|_U$ and $(\ctrc^q_{\le
0}\cF)|_U$, and hence $(J_{!*}\cF)|_U$, are isomorphic to $\cF|_U$.  In
particular, we can see now that $j^*$ is essentially surjective.  Given $\cF
\in \cM^{q,r}(U)$, let $\tilde \cF$ be any object on $\cD(\overline U)$
such that $j^*\tilde \cF \simeq \cF$.  (Such an object exists by~\cite[Corollary~2]{bez:pc}.)  Then $\cF' = J_{!*}\tilde\cF$ is an
object of $\cM^{q,r}(\overline U)$ such that $j^*\cF' \simeq \cF$. 

Now, if $\phi: \cF \to \cG$ is a morphism in $\cM^{q,r}(U)$, then
by~\cite[Corollary~2]{bez:pc}, we can find objects $\cF'$ and $\cG'$ in
$\cD(\overline U)$ and a morphism $\phi': \cF' \to \cG'$ such that $j^*\cF'
\simeq \cF$, $j^*\cG' \simeq \cG$, and $j^*\phi' \simeq \phi$.  By applying
$J_{!*}$, we may assume that $\cF'$, $\cG'$, and $\phi'$ actually belong to
$\cM^{q,r}(\overline U)$.  This shows that $j^*$ is full. 

To show that $j^*$ is faithful, it suffices to show that if $\phi$ is
an isomorphism, then $\phi'$ must be as well.  Since $\phi'|_U$ is an
isomorphism, the kernel and cokernel of $\phi'$ must be supported on
$Z$.  But by~\cite[Lemma~6]{bez:pc}, the fact that $q(x) < p(x) <
r(x)$ for $x \in Z^\topl$ implies that $\cF'$ and $\cG'$ have no
subobjects or quotients supported on $Z$.  Thus, $\phi'$ is an
isomorphism.  Since $j^*$ is fully faithful and essentially
surjective, it is an equivalence of categories.
\end{proof}

\begin{rmk}\label{rmk:interm-ext-calc}
  It follows from the above proof that for any $\cF \in \cM^{q,r}(U)$,
  $\cIC(\overline U,\cF)$ is isomorphic to $\ctrc^q_{\le 0} \ctrc^r_{\ge 0} \tilde
  \cF$, where $\tilde \cF$ is any object of $\cD(\overline U)$ whose
  restriction to $U$ is isomorphic to $\cF$.

  The above proof could also have been carried out using the functor
  $J_{!*}' = \ctrc^r_{\ge 0} \circ \ctrc^q_{\le 0}$ instead of
  $J_{!*}$.  From that version of the proof, one sees that $\cIC(\overline U,\cF)$
  is also isomorphic to $\ctrc^r_{\ge 0} \ctrc^q_{\le 0} \tilde \cF$.
\end{rmk}

\begin{prop}\label{prop:perv-pm}
Let $p$ be a perversity, and let $z_0$ be a generic
point of an irreducible component of $Z$ of
minimal codimension.  Among all perversities $q$ which,
together with some $r$, satisfy the assumptions of
Proposition~\ref{prop:interm-ext}, there is a unique maximal one, denoted
$p^-$.  It is given by 
\begin{align}
p^-(x) &=
\begin{cases}
p(x) - 1 & \text{if $p(x) \ge p(z_0)$,} \\
p(x) & \text{if $p(x) < p(z_0)$.}
\end{cases}
\intertext{Similarly, there is a unique minimal perversity among all $r$ of
that proposition, denoted $p^+$, and given by}
p^+(x) &=
\begin{cases}
p(x) + 1 & \text{if $\dc \bar x - p(x) \ge \dc \bar z_0 - p(z_0)$,} \\
p(x) & \text{if $\dc \bar x - p(x) < \dc \bar z_0 - p(z_0)$.}
\end{cases}
\end{align}
\end{prop}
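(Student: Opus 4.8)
The plan is to verify directly that the two functions $p^-$ and $p^+$ defined by the stated formulas (i) are perversities, (ii) satisfy the hypotheses of Proposition~\ref{prop:interm-ext} (in the role of $q$ and $r$ respectively), and (iii) are extremal among all such. By the symmetry $q \mapsto \bar q$, $p \mapsto \bar p$, $r \mapsto \bar r$ built into the definition of dual perversity — note that the hypotheses of Proposition~\ref{prop:interm-ext} are self-dual under $(q,p,r) \mapsto (\bar r, \bar p, \bar q)$, and that $\dc\bar z_0 - p(z_0) = \bar p(z_0)$ — it suffices to treat $p^-$; the statement for $p^+$ then follows by dualizing. So I would focus the whole argument on $p^-$.

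First I would recall that, because perversities factor through $\dc$, the key invariant of $z_0$ is the pair $(\dc\bar z_0, p(z_0))$, and that $\dc\bar z_0 = \dc Z$ is the minimal codimension occurring in $Z^\topl$; in particular for every $x \in Z^\topl$ we have $\dc\bar x \ge \dc\bar z_0$, hence by the two perversity inequalities both $p(x) \ge p(z_0)$ and $\dc\bar x - p(x) \ge \dc\bar z_0 - p(z_0)$. Using the first of these, the formula for $p^-$ gives $p^-(x) = p(x) - 1$ for all $x \in Z^\topl$, which is exactly the condition $q(x) = p(x)-1$ on $Z^\topl$ demanded by Proposition~\ref{prop:interm-ext}. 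Next I would check that $p^-$ is a perversity: one must verify the two inequalities~\eqref{eqn:perv-defn} for $p^-$, which amounts to a short case analysis according to whether each of the two points involved lies in the region $\{p \ge p(z_0)\}$ or $\{p < p(z_0)\}$; the monotonicity of $p$ and of $\bar p$ makes these regions ``upward closed'' in the appropriate sense, so subtracting $1$ on one region preserves both inequalities. I would also check $q = p^- \le p$, which is immediate from the formula.

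The heart of the matter is then to produce, for this particular $q = p^-$, a companion perversity $r$ with $p \le r$, $r(x) - p^-(x) \le 2$ for all $x$, and $r(x) = p(x) + 1$ on $Z^\topl$ — so that $p^-$ genuinely ``together with some $r$ satisfies the assumptions'' — and conversely to show maximality, i.e., that any $q$ admissible in Proposition~\ref{prop:interm-ext} satisfies $q \le p^-$ pointwise. For the existence of $r$: the natural candidate is $r = p^+$ (to be justified by its own, dual, half of the argument), but more elementarily one can take $r(x) = p(x) + 1$ where $\bar p(x) \ge \bar p(z_0)$ and $r(x) = p(x)$ otherwise, and check the gap condition $r - p^- \le 2$ by cases: the only way to get a gap of $3$ would be to have $p^-(x) = p(x)-1$ and $r(x) = p(x)+1$ simultaneously at a point where these were ``independent'', but the constraints force that whenever $r(x) = p(x)+1$ (i.e.\ $\bar p(x) \ge \bar p(z_0)$) one automatically has enough room — here I would lean on the fact, noted above, that on $Z^\topl$ both conditions hold and the gap is exactly $2$, while off $Z^\topl$ the perversity inequalities relate $p(x)$, $p(z_0)$, $\dc\bar x$ and $\dc\bar z_0$ tightly enough to rule out a gap of $3$. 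For maximality: given admissible $q$ with companion $r$, we know $q \le p$, $r \le$ (anything admissible), $r - q \le 2$, and $q(x) = p(x)-1$, $r(x) = p(x)+1$ on $Z^\topl$. If $q(x) = p(x)$ at some $x$ with $p(x) \ge p(z_0)$, I would derive a contradiction by propagating the equality along the perversity inequalities down to $z_0 \in Z^\topl$ (using $p(x) \ge p(z_0)$ to place $z_0$ ``below'' $x$ in the relevant sense), forcing $q(z_0) = p(z_0)$, which contradicts $q(z_0) = p(z_0) - 1$; hence $q(x) \le p(x) - 1 = p^-(x)$ on that region, and trivially $q(x) \le p(x) = p^-(x)$ on the other region.

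The main obstacle I anticipate is precisely this ``propagation'' step in the maximality argument (and its dual in the minimality argument for $p^+$): one has to turn the single normalization $q(z_0) = p(z_0) - 1$ at the minimal-codimension point into the global bound $q \le p^-$, and this requires carefully unwinding which pairs of points are comparable under the partial preorder implicit in~\eqref{eqn:perv-defn} — i.e.\ showing that $p(x) \ge p(z_0)$ really does let one ``interpolate'' a perversity constraint between $x$ and $z_0$ even though $z_0$ need not lie in $\bar x$. Once the bookkeeping of which region each relevant point falls into is set up cleanly, the rest is routine case-checking; I would organize it by first proving a small lemma that for any perversity $q$ with $q \le p$ and $q$ admissible, and any $x$, one has $q(x) \in \{p(x)-1, p(x)\}$ with $q(x) = p(x)-1$ forcing $p(x) \ge p(z_0)$, after which both the admissibility of $p^-$ and its maximality drop out.
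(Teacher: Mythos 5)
Your setup matches the paper in outline: the reduction to $p^-$ by duality (the hypotheses of Proposition~\ref{prop:interm-ext} are indeed self-dual and $\overline{p^-}=(\bar p)^+$), the case-check that $p^-$ is a perversity, and the observation that $p^-=p-1$ on $Z^\topl$ all correspond to steps in the paper's proof (which treats $p^+$ by ``similar'' arguments rather than by dualizing). Your worry about the companion $r$ is unnecessary: since $p^-\ge p-1$ and $p^+\le p+1$ pointwise, the gap condition $p^+-p^-\le 2$ is automatic, so admissibility of the pair $(p^-,p^+)$ is immediate once both are perversities; no ``gap of 3'' analysis is needed. The genuine gap is in your maximality argument, exactly where you flagged the anticipated obstacle. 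You propose to deduce, from $q(x)=p(x)$ at a point with $p(x)\ge p(z_0)$, that $q(z_0)=p(z_0)$ by propagating the perversity inequalities ``down to $z_0$''. But the inequalities~\eqref{eqn:perv-defn} compare points by codimension, not by $p$-value. In the main case $\dc \bar x\ge \dc \bar z_0$ (where $p(x)\ge p(z_0)$ holds automatically, i.e.\ the bulk of the region on which $p^-$ subtracts $1$), the two inequalities between $x$ and $z_0$ give only $q(x)\ge q(z_0)$ and $q(x)\le q(z_0)+(\dc\bar x-\dc\bar z_0)$; with $q(z_0)=p(z_0)-1$ the upper bound equals $p(x)-1+(\bar p(x)-\bar p(z_0))$, which is $\ge p(x)$ whenever $\bar p(x)>\bar p(z_0)$. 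So no contradiction with $q(x)=p(x)$ can be extracted from $z_0$ alone in that region; your propagation works only in the complementary case $\dc\bar x<\dc\bar z_0$, $p(x)=p(z_0)$, which is precisely the one case where the paper, too, argues via $z_0$.

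The paper closes the main case by a different mechanism: it invokes the normalization $q=p-1$ at \emph{all} points of $Z^\topl$, not just at $z_0$, together with the fact that a perversity depends only on $\dc\bar x$, to conclude $q(x)=p(x)-1$ for every $x$ with $\dc\bar x\ge\dc\bar z_0$ (in effect comparing $x$ with a point of $Z^\topl$ of the same codimension; this implicitly uses that $Z^\topl$ realizes the relevant codimensions, which holds in the situations the paper applies this to, and it is exactly the extra input that a $z_0$-only propagation cannot supply). That is the missing idea in your sketch. Two smaller inaccuracies in your organizing lemma: admissibility does not force $q(x)\in\{p(x)-1,p(x)\}$, since $q(x)=p(x)-2$ with companion value $r(x)=p(x)$ is possible at points off $Z^\topl$; and $q(x)=p(x)-1$ does not force $p(x)\ge p(z_0)$. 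Neither affects the desired upper bound $q\le p^-$, but the lemma cannot serve as the backbone of the proof as you propose.
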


\begin{rmk}\label{rmk:p-pm}
Although our formulas for $p^-$ and $p^+$ appear to be different from those
of~\cite[Theorem~2]{bez:pc}, they do in fact coincide under the assumptions
of loc. cit.  Those assumptions are that $U$ is open
and dense in $X$ and that for any $x \in
U^\topl$ and any $z \in \bar x \cap Z^\topl$, we have
\[
p(z) > p (x)
\qquad\text{and}\qquad
\dc \bar z - p(z) > \dc \bar x - p(x).
\]
These inequalities cannot hold simultaneously unless $\dc \bar x \le
\dc \bar z - 2$.  In particular, this means $U^\topl$ cannot contain
any closed points of $U$, so one must necessarily be in an equivariant
setting.

Proposition~\ref{prop:perv-pm}, on the other hand, applies with no a
priori restrictions on $X$ or $U$.  This really does allow us to use
the intermediate extension functor in nonequivariant settings, but in
practice, it is still necessary to require that $\dc U \le \dc Z - 2$;
indeed, if this condition fails, then $\cM^{p^-,p^+}(U)$ will be
reduced to the zero object.  To see this, note that $p^-(x)=p^+(x)$
implies that $\dc\bar x\le \dc\bar z_0 - 2$, so if $\dc U > \dc Z -
2$, then we have $p^-(x) < p^+(x)$ for all points $x \in U$.  It
follows that ${}^{p^-}\cD(U)^{\le 0} \subset {}^{p^+}\cD(U)^{\le -1}$,
so any object in $\cM^{p^-,p^+}(U)$ will belong to
${}^{p^+}\cD(U)^{\le -1} \cap {}^{p^+}\cD(U)^{\ge 0}$.  The latter
category contains only the zero object.
\end{rmk} 

\begin{proof}
Let us first show that $p^-$ is a perversity.  Suppose $\dc \bar x \ge
\dc \bar y$, so $p(x) \ge p(y)$.  If $p(x) \ge p(y) \ge
p(z_0)$ or $p(z_0) > p(x) \ge p(y)$, then the
conditions~\eqref{eqn:perv-defn} obviously hold because they hold for $p$. 
Now suppose $p(x) \ge p(z_0) > p(y)$.  The strictness of the second
inequality implies that $\dc \bar x > \dc \bar y$.  In this situation, we
clearly have $p^-(x) = p(x) - 1 \ge p(y) = p^-(y)$ and 
\[
\dc \bar x - p^-(x) = \dc \bar x - p(x) + 1 > \dc \bar y - p(y) = -
\dc \bar y - p^-(y). 
\]
Thus, $p^-$ is a perversity.

Let $q$ and $r$ be perversities satisfying the assumptions of
Proposition~\ref{prop:interm-ext}.  The
requirement that $q(x) = p(x) - 1$ for all $x \in Z^\topl$
implies that $q(x) = p(x) - 1 = p^-(x)$ for all $x$ with $\dc \bar x \ge
\dc \bar z_0$.  For all such points, of course, we have $p(x) \ge p(z_0)$.
 Now suppose $x$ is such that $\dc \bar x < \dc \bar z_0$, so that $p(z_0)\ge p(x)$.  If $p(z_0) > p(x)$, it is trivial that
$q(x) \le p^-(x)$, while if $p(z_0) = p(x)$, then  $q(x) \le q(z_0) =
p(z_0) - 1 = p(x) - 1 =
p^-(x)$.   Thus, $q(x) \le p^-(x)$ for all $x \in X^\topl$, so $q \le p^-$, and $p^-$ has the desired maximality
property. 

The proofs of the corresponding statements for $p^+$ are similar.
\end{proof}

\begin{rmk}\label{rmk:gen-perv}
If we were to change the definition of ``perversity'' by imposing the inequalities~\eqref{eqn:perv-defn} only when $y \in \bar x$, then
this result could be improved,  i.e., $p^-$ could be replaced be a larger
perversity and $p^+$ by a smaller one, resulting in a larger domain
category for $\cIC(\overline U, \cdot)$.  Let us call a sequence of points $x_1, y_1, x_2,
\ldots, y_k, x_{k+1}$ in $X^\topl$ a \emph{lower chain} (resp.~\emph{upper
chain}) if the following conditions hold: 
\begin{enumerate}
\item $x_i, x_{i+1} \in \bar y_i$ for all $i$, and $x_{k+1} \in \bar y_k
\cap Z^\topl$, and
\item $p(x_{i+1}) = p(y_i)$ and $p(x_i) = p(y_i) - \dc \bar y_i + \dc
\bar x_i$ (resp.~$p(x_{i+1}) = p(y_i) - \dc \bar y_i + \dc \bar x_{i+1}$
and $p(x_i) = p(y_i)$) for all $i$. 
\end{enumerate}
Let $S$ (resp.~$T$) be the set of all points of $X^\topl$ occurring in some
lower (resp.~upper) chain, and define 
\[
p^\ominus(x) =
\begin{cases}
p(x) - 1 & \text{if $x \in S$,} \\
p(x) & \text{otherwise,}
\end{cases}
\qquad\text{and}\qquad
p^\oplus(x) =
\begin{cases}
p(x) + 1 & \text{if $x \in T$,} \\
p(x) & \text{otherwise.}
\end{cases}
\]
It is not difficult to prove an analogue of Proposition~\ref{prop:perv-pm}
using these formulas instead of $p^-$ and $p^+$. 
\end{rmk}

\section{Notation and Preliminaries}
\label{sect:notation}

In this section, we introduce some useful notation and terminology,
and we prove a number of lemmas on perverse coherent sheaves.  To
simplify the discussion, we henceforth assume that $U$ is actually an
open dense subscheme of $X$ and that $Z$ has codimension at least $2$.
Let $j: U \hookrightarrow X$ be the inclusion map.  For
the most part, we will consider only ``standard'' perversities,
defined as follows.

\begin{defn}
A perversity $p$ is said to be \emph{standard} if
\begin{equation}\label{eqn:norm-perv}
p(x) = p^-(x) = p^+(x) = 0
\qquad\text{if $\dc \bar x = 0$.}
\end{equation}
\end{defn}
Note that if $p$ is standard, so is its dual $\bar p$.

\begin{rmk}
The assumption that $\dc Z \ge 2$ is
essential: if this condition fails, there are no standard
perversities.
\end{rmk}

Given a perversity $p$, when there is no risk of ambiguity, we write
\[
\cD(X)^{-,\le0} = {}^{p^-}\cD(X)^{\le 0}
\qquad\text{and}\qquad
\cD(X)^{+,\ge0} = {}^{p^+}\cD(X)^{\ge 0},
\]
or even simply $\cD^{-,\le 0}$ and $\cD^{+,\ge 0}$.  Next, let
\[
\cM^{p,\pm}(U) = \cD(U)^{-,\le 0} \cap \cD(U)^{+,\ge 0}
\qquad\text{and}\qquad
\cM^{p,\pm}(X) = \cD(X)^{-,\le 0} \cap \cD(X)^{+,\ge 0}.
\]
Then we have an intermediate extension functor $\cIC(X,\cdot): \cM^{p,\pm}(U) \to
\cM^{p,\pm}(X)$.  These categories will usually be denoted simply $\cM^\pm(U)$ and $\cM^\pm(X)$, respectively.

Let $\D$ denote the coherent (Serre-Grothendieck) duality functor $\cRHom(\cdot,\omega_X)$.  By~\cite[Lemma~5]{bez:pc}, $\D$ takes $\cM^p(X)$ to $\cM^{\bar p}(X)$ and $\cM^{p,\pm}(\overline U)$ to $\cM^{\bar p, \pm}(\overline U)$.

Two specific standard perversities will be particularly useful in the
sequel:
\begin{align*}
s(x) &=
\begin{cases}
0 \\
1 \end{cases}
&
c(x) &= 
\begin{cases}
\dc \bar x & \qquad\text{if $\dc \bar x < \dc Z$,} \\
\dc \bar x - 1 & \qquad\text{if $\dc \bar x \ge \dc Z$.}
\end{cases}
\\
\intertext{We call $s$ the ``$S_2$ perversity'' and $c$ the
``Cohen--Macaulay
perversity'' for reasons that are made clear in Lemma~\ref{lem:Ox-IC}. 
These two perversities are dual to one another, and they are extremal
among all standard perversities (see Lemma~\ref{lem:s2-cm-extreme}).  For convenience, we also
record the corresponding ``$-$'' and ``$+$'' perversities:} 
s^-(x) &= 0
&
c^+(x) &= \dc \bar x \\
s^+(x) &=
\begin{cases}
0 \\
1 \\
2 
\end{cases}
&
c^-(x) &=
\begin{cases}
\dc \bar x & \text{if $\dc \bar x < \dc Z + 1$,} \\
\dc \bar x - 1 & \text{if $\dc \bar x = \dc Z + 1$,} \\
\dc \bar x - 2 & \text{if $\dc \bar x \ge \dc Z$.}
\end{cases}
\end{align*}

It is clear that $s^-$ and $c^+$ are the smallest and largest possible
perversities, respectively, that take the value $0$ on generic points of
$X$.  Since the ``$-$'' and ``$+$'' operations respect the partial
order on perversities, we have the following result.

\begin{lem}\label{lem:s2-cm-extreme}
Every standard perversity $p$ satisfies $s \le p \le c$.\qed
\end{lem}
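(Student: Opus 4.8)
The plan is to prove the left-hand inequality $s \le p$ directly, and then to obtain the right-hand inequality $p \le c$ from it by duality. I work under the standing assumptions of this section, and I use freely that every perversity is a nondecreasing function of $\dc\bar x$, that $s(x) = 0$ when $\dc\bar x < \dc Z$ and $s(x) = 1$ when $\dc\bar x \ge \dc Z$, and that $\dc\bar z_0 = \dc Z$, where $z_0$ is as in Proposition~\ref{prop:perv-pm}.

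Consider first a point $x$ with $\dc\bar x < \dc Z$, so that $s(x) = 0$. If $x_0$ is a generic point of $X$, then $\dc\bar x_0 = 0 \le \dc\bar x$, so monotonicity gives $p(x) \ge p(x_0)$, and $p(x_0) = 0$ because $p$ is standard. Hence $p(x) \ge 0 = s(x)$ for all such $x$.

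Now consider a point $x$ with $\dc\bar x \ge \dc Z$, so that $s(x) = 1$. Since $\dc\bar x \ge \dc Z = \dc\bar z_0$, monotonicity reduces the inequality $p(x) \ge 1$ to the single inequality $p(z_0) \ge 1$, and this is where the full strength of the definition of ``standard'' enters. Evaluating the formula for $p^-$ from Proposition~\ref{prop:perv-pm} at a generic point $x_0$ of $X$, and using $p(x_0) = 0$, we find $p^-(x_0) = p(x_0) - 1 = -1$ in the case $p(x_0) \ge p(z_0)$, i.e.\ in the case $p(z_0) \le 0$; but standardness of $p$ forces $p^-(x_0) = 0$, so $p(z_0) \le 0$ is impossible and $p(z_0) \ge 1$. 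Combined with the previous paragraph, this gives $s \le p$ for every standard perversity $p$.

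Finally, recall from the text that $p \mapsto \bar p$ is an order-reversing involution on perversities carrying standard perversities to standard perversities, and that $\bar s = c$. Since $\bar p$ is again standard, the inequality just proved gives $s \le \bar p$, and dualizing yields $p = \bar{\bar p} \le \bar s = c$, as desired. I do not anticipate a serious obstacle; the only point that needs care is the realization that the condition ``$p(x) = 0$ whenever $\dc\bar x = 0$'' alone is too weak (it gives only $s^- \le p \le c^+$), so one must genuinely use the vanishing $p^-(x) = 0$ --- equivalently, by duality, $p^+(x) = 0$ --- at the generic points of $X$ in order to control $p$ in codimension $\dc Z$.
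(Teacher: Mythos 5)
Your proof is correct. The paper's own justification is essentially a one-line remark: $s^-$ and $c^+$ are, respectively, the minimal and maximal perversities vanishing at generic points of $X$, and the ``$-$'' and ``$+$'' operations are monotone; how precisely these two facts deliver $s \le p \le c$ is left unsaid. Your argument is more explicit and in that sense more self-contained. You prove $s \le p$ directly, and you correctly isolate the crucial use of standardness: the requirement $p^-(x_0)=0$ (not merely $p(x_0)=0$) is what forces $p(z_0)\ge 1$ via the formula of Proposition~\ref{prop:perv-pm}---exactly the point where the weaker hypothesis $p(x_0)=0$ alone would yield only $s^-\le p\le c^+$, as you note. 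Deducing $p\le c$ by dualizing $s\le\bar p$, using that $\bar s=c$ and that $p\mapsto\bar p$ is an order-reversing involution preserving standardness (both recorded in the text just before the lemma), is a clean way to finish that avoids repeating the computation on the other side. So your route is genuinely a bit different from the paper's terse appeal to monotonicity of $\pm$, and arguably exposes the content of the lemma more clearly.
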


We will use the following observation repeatedly.

\begin{lem}\label{lem:icchar}
Let $\cF$ be a coherent sheaf on $U$.  The complex $\cIC(X, \cF)$ is defined if and
only if  $\cF \in \cD(U)^{+,\ge 0}$, or equivalently, if $\depth_{\cO_x} \cF_x \ge p^+(x)$ for all $x \in U^\topl$.

If $\cIC(X,\cF)$ is defined, then given a coherent extension $\cG$ of $\cF$ to $X$, we have $\cG \simeq \cIC(X,\cF)$ if and only if $\cG \in\cD(X)^{+,\ge 0}$, or equivalently, if $\depth_{\cO_x} \cG_x > p(x)$ for all $x \in Z^\topl$.
\end{lem}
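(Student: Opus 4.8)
The plan is to translate the abstract $t$-structure characterization of $\cIC$ into the concrete depth conditions, using the fact that $\cF$ is a sheaf (concentrated in degree $0$). First I would establish the first assertion. By definition, $\cIC(X,\cF)$ is defined precisely when $\cF \in \cM^{p,\pm}(U) = \cD(U)^{-,\le 0} \cap \cD(U)^{+,\ge 0}$. Since $\cF$ is a coherent sheaf and $p$ is standard (so $p^-(x) \ge 0$ for all $x$ and $p^-$ agrees with the constant perversity $0$ on generic points), the containment $\cF \in \cD(U)^{-,\le 0}$ is automatic: $H^k(i_x^*\cF) = 0$ for $k > 0 \ge p^-(x)$ since $i_x^*\cF$ lives in nonnegative degrees — wait, I need to be careful about sign conventions. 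I would recall that for a coherent sheaf $\cF$, $i_x^*\cF = \cF_x \otimes_{\cO_x} \kappa(x)$ is concentrated in degree $0$, so $\cF \in \cD(U)^{\le 0}$ for any perversity taking nonnegative values, which $p^-$ does since $p^- \ge s^- = 0$ by Lemma~\ref{lem:s2-cm-extreme} (applied to $p^-$, or rather directly from the formula for $p^-$ together with standardness). Hence the only real constraint is $\cF \in \cD(U)^{+,\ge 0}$, giving the first equivalence. For the second equivalence in the first assertion, I would invoke the standard local-duality computation of $i_x^!\cF$: for a coherent sheaf, $H^k(i_x^!\cF) = 0$ for $k < \depth_{\cO_x}\cF_x$, and $H^{\depth}(i_x^!\cF) \ne 0$, so the condition ``$H^k(i_x^!\cF) = 0$ for all $k < p^+(x)$'' is equivalent to $\depth_{\cO_x}\cF_x \ge p^+(x)$. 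This is the content of the duality between depth and the vanishing of local cohomology / the $i_x^!$ functor, and it is exactly the kind of fact encoded in \cite[Lemma~5]{bez:pc} or the surrounding material; I would cite the relevant lemma of \cite{bez:pc} on how $i_x^!$ interacts with the dualizing complex, using the normalization that $i_x^!\omega_X$ sits in degree $\dc\bar x$.

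For the second assertion, suppose $\cIC(X,\cF)$ is defined and $\cG$ is a coherent extension of $\cF$. By Remark~\ref{rmk:interm-ext-calc}, $\cIC(X,\cF) \simeq \ctrc^{p^-}_{\le 0}\ctrc^{p^+}_{\ge 0}\cG$ (taking $\tilde\cF = \cG$). I would argue as follows: since $\cG$ is a sheaf and $p^-$ is standard with $p^- \ge 0$, we already have $\cG \in \cD(X)^{-,\le 0}$, so $\ctrc^{p^-}_{\le 0}$ does nothing new on the relevant piece; more precisely, $\cG \simeq \cIC(X,\cF)$ iff $\cG \in \cM^{p,\pm}(X)$ iff (given $\cG \in \cD(X)^{-,\le 0}$ automatically) $\cG \in \cD(X)^{+,\ge 0}$. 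Then I must check that, given $\cF = \cG|_U \in \cD(U)^{+,\ge 0}$ already holds, the extra condition $\cG \in \cD(X)^{+,\ge 0}$ reduces to a condition at points of $Z^\topl$ only — namely $H^k(i_x^!\cG) = 0$ for $k < p^+(x)$ for $x \in Z^\topl$ — and that since $p$ is standard, for $x \in Z^\topl$ we have $p^+(x) = p(x) + 1$ (this uses $\dc Z \ge 2$ and the definition of $p^+$ via $z_0$), so the condition becomes $\depth_{\cO_x}\cG_x \ge p(x) + 1$, i.e. $\depth_{\cO_x}\cG_x > p(x)$, again via the depth/$i_x^!$ translation.

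There is one subtlety to address carefully: the claim ``$\cG$ a coherent extension and $\cG \in \cD(X)^{+,\ge 0}$ imply $\cG \simeq \cIC(X,\cF)$'' requires knowing that such a $\cG$ is already perverse and already equal to the intermediate extension, not merely that it lies in $\cM^{p,\pm}(X)$. But this is immediate: $\cM^{p,\pm}(X) = \cD(X)^{-,\le 0}\cap\cD(X)^{+,\ge 0}$, an object of $\cM^{p,\pm}(X)$ restricting to $\cF$ is by the equivalence of Proposition~\ref{prop:interm-ext} (with $q = p^-$, $r = p^+$) necessarily isomorphic to $\cIC(X,\cF)$, and the isomorphism can be taken compatibly with the identification on $U$ because $j^*$ is fully faithful. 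Conversely $\cIC(X,\cF)$ itself is a coherent sheaf (it lies in $\cD(X)^{-,\le 0}$ and, being the intermediate extension of a sheaf across a codimension-$\ge 2$ closed set, has no cohomology in negative degrees and — by a depth estimate at generic points of $Z$ — none in positive degrees either; alternatively one observes $\ctrc^{p^+}_{\ge 0}\cG$ for a sheaf $\cG$ is again concentrated in degree $0$ when $p^+\ge 0$), so the two-sided comparison makes sense at the level of sheaves.

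I expect the main obstacle to be bookkeeping the sign and degree conventions in the depth-versus-$i_x^!$ dictionary — specifically pinning down that, with Bezrukavnikov's normalization of $\omega_X$, one has $\min\{k : H^k(i_x^!\cF)\ne 0\} = \depth_{\cO_x}\cF_x$ for a coherent sheaf $\cF$ — and making sure this is applied consistently at points of $U^\topl$ (for the ``defined'' criterion, via $p^+$) versus at points of $Z^\topl$ (for the ``equals'' criterion, where $p^+(x) = p(x)+1$ kicks in). Everything else is a routine unwinding of the definitions and of Remark~\ref{rmk:interm-ext-calc}.
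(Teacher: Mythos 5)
Your argument is correct and follows essentially the same route as the paper's proof: for a coherent sheaf the $\cD^{-,\le 0}$ condition is automatic because $i^*_x$ is exact and $p^-\ge 0$ for a standard perversity, so only the $\cD^{+,\ge 0}$ condition has content, and it translates to the depth inequality via $\min\{k : H^k(i^!_x\cF)\ne 0\}=\depth_{\cO_x}\cF_x$; the second assertion then follows by the same argument applied to $\cG$, together with the equality $p^+(x)=p(x)+1$ for $x\in Z^\topl$ and the fact, from the equivalence in Proposition~\ref{prop:interm-ext}, that any object of $\cM^{p,\pm}(X)$ restricting to $\cF$ is forced to be $\cIC(X,\cF)$.

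One small correction worth making: the parenthetical in your final paragraph claiming that $\cIC(X,\cF)$ is always a coherent sheaf is false for a general standard perversity, and the subsidiary claim that $\ctrc^{p^+}_{\ge 0}\cG$ of a sheaf is again a sheaf whenever $p^+\ge 0$ is not true either. Indeed, the first claim would render Theorem~\ref{thm:macaulay} vacuous (that theorem is precisely about \emph{when} $\cIC^c(X,\cO_U)$ happens to be a sheaf), and the truncation $\ctrc^{p^+}_{\ge 0}$ applied to a sheaf can produce cohomology in negative standard degree unless $p^+$ is the constant zero perversity (i.e.\ unless $p=s$). Fortunately this aside is superfluous for your proof: the biconditional in the lemma is an assertion in $\cD(X)$, where both $\cG$ and $\cIC(X,\cF)$ live, and if $\cIC(X,\cF)$ fails to be a sheaf then the argument you already gave shows that no coherent extension $\cG$ of $\cF$ can lie in $\cD(X)^{+,\ge 0}$ (such a $\cG$ would be forced to be isomorphic to $\cIC(X,\cF)$, a contradiction), so the biconditional holds vacuously. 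Simply delete that parenthetical and the proof is clean.
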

\begin{proof}
The exactness of $i^*_x$ implies that $H^k(i^*_{x}\cF)=0$ unless
$k=0$, and since we are assuming that $p$ is a standard perversity, this
cohomology vanishes for $k>p^-(x)\ge 0$.  Thus, $\cF\in\cD(U)^{-,\le
0}$ automatically.  The depth-condition characterization of $\cD(U)^{+,\ge0}$ comes from the well-known fact that the lowest degree in which $H^k(i^!_x\cF)$ is nonzero is $\depth_{\cO_x} \cF_x$.  The same arguments apply to $\cG$ as well.
\end{proof}

Next, recall
(see~\cite[\S I.3.3]{eh:schemes}) that to any quasicoherent sheaf
of algebras $\cF$ on $X$, one can canonically associate a new scheme $Y$
and an affine morphism $f: Y \to X$ such that $f_*\cO_Y \simeq \cF$. 
Moreover, $f$ is finite if and only if $\cF$ is coherent. This procedure is
often called ``global Spec''; we will use the notation $Y = \bSpec \cF$.

Coherent sheaves of algebras and the global Spec operation play a major role in the sequel.  The following proposition relates these to the $\cIC$ functor.

\begin{prop}\label{prop:Spec-IC}
Let $\cF$ be a coherent sheaf of algebras on $X$.  Form $Y = \bSpec \cF$,
and let $f: Y \to X$ be the canonical map.  Then $\cIC(Y,\cO_{f^{-1}(U)})$ is defined if and only if $\cIC(X,\cF|_U)$ is.  If both are defined, then $\cIC(Y, \cO_{f^{-1}(U)})
\simeq \cO_Y$ if and only if $\cIC(X, \cF|_U) \simeq \cF$. 
\end{prop}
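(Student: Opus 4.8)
The plan is to reduce everything to statements that can be checked locally on $X$, and there to exploit the fact that $f$ is an affine morphism, so that $f_*$ is exact and conservative on coherent sheaves. The key technical point is a ``projection-type'' compatibility: for a coherent sheaf $\cG$ on $Y$ and a point $x \in X^\topl$, the stalk cohomology $H^k(i_x^* f_*\cG)$ and $H^k(i_x^! f_*\cG)$ should be computable from the behavior of $\cG$ at the (finitely many) points of $Y^\topl$ lying over $x$. Since $\codim_X \bar x = \codim_Y \overline{\{y\}}$ for each such $y$ (the morphism $f$ is finite, hence the relevant local rings have the same dimension), the perversity $p$ pulls back along $f$ to a perversity on $Y$, and one checks directly from the definitions of ${}^p\cD^{\le 0}$ and ${}^p\cD^{\ge 0}$ via depth and $i^*$-cohomology (using Lemma~\ref{lem:icchar}) that $\cG \in {}^p\cD(Y)^{\le 0}$ (resp.\ $\ge 0$) if and only if $f_*\cG$ lies in the corresponding category on $X$ for the perversity $p$, and likewise for $p^-$, $p^+$. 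In particular $f_*$ restricts to an equivalence-respecting functor $\cM^{p,\pm}(Y) \to \cM^{p,\pm}(X)$ and similarly over $U$, compatible with restriction to the open set.

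With that compatibility in hand, the two assertions are straightforward. For the first: $\cIC(Y, \cO_{f^{-1}(U)})$ is defined iff $\cO_{f^{-1}(U)} \in \cD(f^{-1}(U))^{+,\ge 0}$ by Lemma~\ref{lem:icchar}, iff $f_*\cO_{f^{-1}(U)} = \cF|_U$ lies in $\cD(U)^{+,\ge 0}$ by the compatibility of $f_*$ with the ``$+$'' perversity, iff $\cIC(X, \cF|_U)$ is defined, again by Lemma~\ref{lem:icchar}. For the second: suppose both are defined. By the characterization in Lemma~\ref{lem:icchar}, $\cO_Y \simeq \cIC(Y, \cO_{f^{-1}(U)})$ iff $\cO_Y \in \cD(Y)^{+,\ge 0}$ (as a coherent extension of $\cO_{f^{-1}(U)}$), iff $f_*\cO_Y = \cF \in \cD(X)^{+,\ge 0}$, iff $\cF \simeq \cIC(X, \cF|_U)$ --- where in the last step one uses that $\cF$ is itself a coherent extension of $\cF|_U$ to $X$, so Lemma~\ref{lem:icchar} applies. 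One subtlety: Lemma~\ref{lem:icchar} is phrased so that ``$\cG \simeq \cIC(X,\cF)$'' includes the assertion that $\cG$ restricts to $\cF$ on $U$; here both $\cO_Y$ and $\cF$ restrict correctly by construction of global Spec, so there is no gap.

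The main obstacle is establishing the compatibility of $f_*$ with the perverse ($t$-)structures, i.e.\ the claim that $f_* \cG$ detects membership in ${}^p\cD^{\le 0}$, ${}^p\cD^{\ge 0}$ and their $\pm$ variants. The $i^*$ side is easy --- $f$ affine makes $f_*$ exact, and $i_x^* f_* \cG$ decomposes over the fiber --- but the $i^!$ side requires more care, since $i_x^!$ does not obviously commute with $f_*$. The clean way around this is coherent duality: $f$ finite gives $\D_X \circ f_* \simeq f_* \circ \D_Y$ (with a suitable normalization of $\omega_Y = f^! \omega_X$), and duality exchanges ${}^p\cD^{\le 0}$ with ${}^{\bar p}\cD^{\ge 0}$ on both sides (cited from \cite[Lemma~5]{bez:pc}), so the $i^!$ statement for $f_*$ follows formally from the $i^*$ statement for the dual perversity. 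I would therefore organize the proof as: (i) note $f_*$ exact, $f$ preserves codimension, so $p$ and $p^\pm$ pull back to perversities on $Y$; (ii) check the $i^*$/depth compatibility directly; (iii) deduce the $i^!$ compatibility by duality; (iv) conclude via Lemma~\ref{lem:icchar} as above. Steps (i), (ii), (iv) are essentially bookkeeping; step (iii) is the only place where one must be slightly careful about normalizations of dualizing complexes.
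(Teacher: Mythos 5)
Your proposal is correct, and its overall skeleton is the same as the paper's: reduce via Lemma~\ref{lem:icchar} to checking membership in $\cD^{+,\ge 0}$ on $U$/$f^{-1}(U)$ and on $X$/$Y$, and then transfer that condition across $f_*$ using finiteness of $f$. The divergence is in how the transfer of the $i^!$-condition is carried out. The paper does it directly: it uses the isomorphism $i^!_x f_*\cO_Y \simeq R(f|_{Y_x})_* i^!_{Y_x}\cO_Y$ for the finite fiber $Y_x = f^{-1}(x)$, together with the fact that $(f|_{Y_x})_*$ and $i^!_{y,Y_x}=i^*_{y,Y_x}$ are exact and kill no relevant nonzero sheaf, to conclude that the lowest nonvanishing degree of $i^!_x\cF$ agrees with that of $i^!_y\cO_Y$ over all $y\in Y_x$ --- so the commutation of $i^!_x$ with $f_*$ that you were wary of is exactly the step the paper takes head-on, and it immediately matches the depth formulation used in Lemma~\ref{lem:icchar}. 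Your route instead passes to the dual perversity via $\D_X\circ f_*\simeq f_*\circ\D_Y$ (Grothendieck duality for the finite morphism, with $\omega_Y=f^!\omega_X$) and \cite[Lemma~5]{bez:pc}, reducing everything to the easy $i^*$/stalk compatibility; this is a legitimate and conceptually clean alternative, but it costs you the verification that $f^!\omega_X$ is normalized so that $i^!_y(f^!\omega_X)$ sits in degree $\dc\overline{\{y\}}$ (so that the duality exchange on $Y$ uses the pulled-back perversity and its dual $\bar p\circ f$), plus the standing hypotheses on $Y$ that the paper checks in the remark preceding its proof. Both arguments yield the same byproduct the paper records afterwards, namely that $f_*$ is $t$-exact for finite $f$; the paper's version is shorter and stays entirely within the depth/$i^!$ bookkeeping already set up, while yours trades that for one application of finite-morphism duality.
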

\begin{rmk}
  Here, the notation $\cIC(Y,\cO_{f^{-1}(U)})$ is to be understood in
  terms of the intermediate extension functor associated to the open
  inclusion $f^{-1}(U) \hookrightarrow Y$ and the perversity $p' = p
  \circ f: Y^\topl \to Z$.  The fact that $f$ is a finite morphism
  implies that the complement of $f^{-1}(U)$ has the same codimension
  as $Z$.  In addition, $\dc f^{-1}(\bar x) = \dc \bar x$ for any $x
  \in X^\topl$, so $p'$ does indeed satisfy the
  inequalities~\eqref{eqn:perv-defn}.

Moreover, since $Y$ is finite over $X$, it satisfies our basic
hypotheses for defining perverse coherent sheaves---it is of finite type over
$S$, and $\Coh(Y)$ has enough locally free objects.  (To see the
latter, note that if $\cG$ is a coherent $\cO_Y$-module, then there is
a locally free $\cO_X$-module $\cF$ which surjects to $f_*\cG$.  This
gives a surjective morphism of $\cO_Y$-modules $f^*\cF\to f^*f_*\cG$.
Composing with the surjection $f^*f_*\cG\to \cG$ exhibits $\cG$ as a
quotient of the locally free $\cO_Y$-module
$f^*\cF$.) 
\end{rmk}
\begin{proof}
By Lemma~\ref{lem:icchar}, it suffices to show $\cO_{f^{-1}(U)} \in \cD(f^{-1}(U))^{+,\ge0}$ if and only if $\cF|_U \in \cD(U)^{+,\ge0}$, and then 
that $\cO_Y \in \cD(Y)^{+,\ge0}$ if and only if
$\cF \in \cD(X)^{+,\ge0}$.  We prove both assertions simultaneously.

Let $x \in X^\topl$, and let $Y_x
= f^{-1}(x)$.  The latter is a finite set of points, and $(f|_{Y_x})_*$ is
clearly an exact functor that kills no nonzero sheaf.  Now,
we have
\[
R(f|_{Y_x})_* i^!_{Y_x} \cO_Y \simeq i^!_x f_* \cO_Y\simeq i^!_x \cF,
\]
so the lowest degree in which $H^k(i^!_x\cF)$ is nonzero is the same as the
lowest degree in which $H^k(i^!_{Y_x}\cO_Y)$ is nonzero.  Let $i_{y,Y_x}$
be the inclusion of a point $y$ into $Y_x$.  Then $i^!_{y,Y_x} =
i^*_{y,Y_x}$ is also an exact functor; it kills no nonzero sheaf
whose support contains $y$.  Since $i^!_y = i^!_{y,Y_x} \circ i^!_{Y_x}$,
we conclude that the lowest degree in which some $H^k(i^!_y \cO_Y) =
H^k(i^!_{y,Y_x}i^!_{Y_x} \cO_Y)$ is nonzero is the same as the lowest
degree in which $H^k(i^!_x \cF)$ is nonzero.  In particular, considering
the degree $k = p^+(x)$, we see that $\cO_Y \in \cD(Y)^{+,\ge0}$
if and only if $\cF \in \cD(X)^{+,\ge0}$, and likewise for $\cO_{f^{-1}(U)}$ and $\cF|_U$.
\end{proof}

Note that the proof in fact shows that if $f$ is a finite morphism,
then $f_*$ is $t$-exact.

In the remainder of the section, we prove a handful of results specific to the $S_2$ and Cohen--Macaulay perversities.

\begin{prop}\label{prop:S2-j*}
For any coherent sheaf $\cE$ on $U$ such that $\cIC^s(X, \cE)$ is
defined, there is a canonical isomorphism $\cIC^s(X, \cE) \simeq
j_*\cE$.  In particular, $j_*\cE$ is coherent.
\end{prop}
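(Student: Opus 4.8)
The plan is to use the depth-theoretic characterization of $\cIC^s$ supplied by Lemma~\ref{lem:icchar} together with the local-cohomology description of $j_*$, showing that the two coherent extensions of $\cE$ to $X$ agree because they satisfy the same universal property among coherent extensions. Concretely, by Lemma~\ref{lem:icchar}, $\cIC^s(X,\cE)$ is defined precisely when $\depth_{\cO_x}\cE_x \ge s^+(x)$ for all $x \in U^\topl$, and a coherent extension $\cG$ of $\cE$ to $X$ is isomorphic to $\cIC^s(X,\cE)$ if and only if $\depth_{\cO_x}\cG_x > s(x)$ for all $x \in Z^\topl$. Since $s(x) = 1$ for every $x \in Z^\topl$ (all such points have $\dc \bar x \ge \dc Z \ge 2 > 0$), this last condition reads $\depth_{\cO_x}\cG_x \ge 2$ for all $x \in Z^\topl$. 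So the entire content of the proposition is: \emph{$j_*\cE$ is coherent, it restricts to $\cE$ on $U$, and it has depth at least $2$ at every point of $Z$.}

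First I would establish coherence and the depth bound simultaneously. The standard tool is the exact sequence relating $j_*j^*$ to local cohomology along $Z$: for any quasicoherent extension $\tilde\cE$ of $\cE$ (e.g.\ push forward and truncate, or use \cite[Corollary~2]{bez:pc} to get $\tilde\cE \in \cD(X)$ with $j^*\tilde\cE \simeq \cE$), one has a canonical triangle $R\Gamma_Z(\tilde\cE) \to \tilde\cE \to Rj_*\cE \to$, and the sheaf $j_*\cE = \cH^0(Rj_*\cE)$ together with the kernel/cokernel of $\tilde\cE \to j_*\cE$ is controlled by $\cH^0_Z$ and $\cH^1_Z$. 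The hypothesis that $\cIC^s(X,\cE)$ is defined — i.e.\ $\depth \cE_x \ge s^+(x)$ on $U^\topl$, which for the $S_2$ perversity forces depth $\ge 2$ at every point of $U$ of codimension $\ge 2$, and is automatic at codimension $0$ and $1$ — is exactly what is needed to run the usual Grothendieck-finiteness argument (the ``$S_2$-ification exists'' argument, cf.\ the discussion of Serre's condition): it guarantees that the local cohomology sheaves $\cH^i_Z(\tilde\cE)$ are coherent for $i \le 1$, hence $j_*\cE$ is coherent. A cleaner route, which I would actually take, is to bypass the general machinery: note $j_*\cE$ is automatically in $\cD(X)^{-,\le 0}$ (it is a sheaf, so concentrated in degree $0 \le s^-(x)$), and it is automatically in $\cD(X)^{+,\ge 0}$ over $U$ by hypothesis; the point is that \emph{every} section of $j_*\cE$ over an open set meeting $Z$ extends across $Z$ with depth $\ge 2$ there because $j_*\cE$ is reflexive-like in codimension $2$ — but one still must prove coherence.

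So here is the argument I would write. Apply $\cIC^s(X,\cdot)$ to $\cE$ to get $\cG := \cIC^s(X,\cE) \in \cM^{s,\pm}(X)$; by Lemma~\ref{lem:icchar} it is a coherent extension of $\cE$ with $\depth_{\cO_x}\cG_x \ge 2$ for all $x \in Z^\topl$, and of course $\cG$ is a genuine sheaf (it lies in the heart, but more to the point $j^*\cG \simeq \cE$ is a sheaf and $\cG \in \cD^{\le 0}$ for the standard $t$-structure twice over). Now I claim $\cG \simeq j_*\cE$. There is a canonical adjunction map $\cG \to j_*j^*\cG \simeq j_*\cE$. It is an isomorphism over $U$. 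Its kernel and cokernel are supported on $Z$; the kernel is a subsheaf of $\cG$ supported on $Z$, and the cokernel sits in an exact sequence involving $\cH^1_Z(\cG)$. The condition $\depth_{\cO_x}\cG_x \ge 2$ for $x \in Z^\topl$ says precisely that $\cH^0_Z(\cG) = \cH^1_Z(\cG) = 0$ (local cohomology below the depth vanishes), which kills both the kernel and the cokernel. Hence $\cG \xrightarrow{\sim} j_*\cE$, which gives at once that $j_*\cE$ is coherent and isomorphic to $\cIC^s(X,\cE)$. (One small point to check: $\cH^1_Z(\cG) = 0$ needs $\depth \ge 2$ at \emph{all} points of $Z$, including non-generic ones — but $\depth_{\cO_x}\cG_x \ge \depth_{\cO_{x'}}\cG_{x'}$ fails in the wrong direction, so I would instead invoke that $\cG$, being in $\cD(X)^{+,\ge 0}$ for $s^+$, satisfies $H^k(i^!_x \cG) = 0$ for $k < s^+(x)$ at \emph{every} $x \in X^\topl$, and $s^+(x) \ge 2$ for every $x$ with $\dc \bar x \ge \dc Z$, i.e.\ every point of $Z$ that matters; this is exactly the vanishing of $\cH^{<2}_Z$.)

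The main obstacle is the passage from the generic-point depth statements packaged in Lemma~\ref{lem:icchar} to the honest vanishing of the local cohomology sheaves $\cH^0_Z$ and $\cH^1_Z$ — i.e.\ making sure ``depth $\ge 2$ at points of $Z^\topl$'' genuinely controls \emph{all} of $Z$, not just its generic points, so that the adjunction kernel and cokernel really are zero rather than merely zero in codimension $0$ within $Z$. I expect this to come out cleanly by re-reading the condition $\cG \in \cD(X)^{+,\ge 0}$ (for the perversity $s^+$) through the definition: $H^k(i^!_x\cG) = 0$ for $k < s^+(x)$, for \emph{all} $x \in X^\topl$, and checking that $s^+(x) \ge 2$ whenever $\dc \bar x \ge \dc Z \ge 2$, together with the identification of $\varinjlim$ over such $x$ of $i^!_x$ with $R\Gamma_Z$ in the relevant degrees. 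Everything else is the routine adjunction-triangle computation.
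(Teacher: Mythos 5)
Your overall strategy---show the adjunction map $\cIC^s(X,\cE) \to j_*\cE$ is an isomorphism by proving $\cH^0_Z$ and $\cH^1_Z$ of $\cG = \cIC^s(X,\cE)$ vanish, and get coherence of $j_*\cE$ for free---is a genuinely different route from the paper's, and in the \emph{nonequivariant} case it is complete and arguably cleaner: there $Z^\topl = Z$, so $\cG \in \cD(X)^{+,\ge 0}$ really does give $\depth_{\cO_x}\cG_x \ge 2$ at every point of $Z$, which (via $\depth_I M = \min_{\mathfrak{p} \in V(I)} \depth M_{\mathfrak{p}}$) is exactly $\cH^0_Z(\cG) = \cH^1_Z(\cG) = 0$, and the four-term sequence $0 \to \cH^0_Z(\cG) \to \cG \to j_*\cE \to \cH^1_Z(\cG) \to 0$ finishes the argument.

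However, the proposition lives in the paper's standing equivariant setting, and there the obstacle you flag at the end is a genuine gap, not a routine verification. The perverse condition $\cG \in \cD(X)^{+,\ge 0}$ constrains $H^k(i^!_x\cG)$ only at points $x \in X^\topl$, i.e.\ at generic points of $G$-invariant subschemes; but $\cH^0_Z(\cG) = \cH^1_Z(\cG) = 0$ is equivalent to $\depth \cG_y \ge 2$ at \emph{every} point $y$ of $Z$, and for an arbitrary coherent sheaf depth can drop at non-generic points. There is no ``identification of $\varinjlim$ of $i^!_x$ over $x \in Z^\topl$ with $R\Gamma_Z$'' available in the paper's generality; the only generic-to-special transfer the paper proves is Lemma~\ref{lem:orbit-depth}, which requires $G$ a linear algebraic group over a field acting on a variety (so that equivariant sheaves on orbits are locally free), and is not invoked here. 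In fact, since the cokernel of $\cG \to j_*\cE$ \emph{is} $\cH^1_Z(\cG)$, the transfer you need is essentially equivalent to the proposition itself, so asserting it is circular. The paper avoids the issue entirely: it gets coherence of $j_*\cE$ from the finiteness result \cite[Corollary~3]{bez:pc} applied to $\ctrc\std_{\le 0}Rj_*\cE$ (using only the $s^+$-conditions at $U^\topl$), notes that $j_*\cE \in \cD(X)^{-,\le 0}$ so that $\cIC^s(X,\cE) \simeq \ctrc^+_{\ge 0}(j_*\cE)$ by Remark~\ref{rmk:interm-ext-calc}, and then shows the truncation map $j_*\cE \to \cIC^s(X,\cE)$ and the adjunction map $\cIC^s(X,\cE) \to j_*\cE$ are mutually inverse by full faithfulness of both functors. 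To repair your argument equivariantly you would either need to prove the depth transfer (e.g.\ restrict to the situation of Lemma~\ref{lem:orbit-depth}, or argue via the forgetful functor that the nonequivariant $i^!_y$-conditions hold at all $y \in Z$---which again is not automatic from membership in the equivariant $\cD(X)^{+,\ge 0}$), or fall back on the coherence input from \cite{bez:pc} as the paper does.
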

\begin{proof}
  We begin by observing that $\cIC^s(X,\cE)$ is actually a sheaf (
    i.e., that it is concentrated in degree $0$).  Indeed,
  $\cIC^s(X,\cE)$ is perverse with respect to $s^-$.  This perversity
  is constant with value $0$, so the resulting $t$-structure is just
  the standard $t$-structure.

Next, we show that $j_*\cE$ is coherent.  Note that the smallest value
taken by $s^+$ on $Z^\topl$ is $2$.  Now, $\cE$ is, by
assumption, a perverse coherent sheaf on $U$ with respect to $s^+$.
According to~\cite[Corollary~3]{bez:pc}, the complex $\ctrc\std_{\le
0} (Rj_*\cE)$ has coherent cohomology.  But
that object is simply $j_*\cE$, the nonderived push-forward of $\cE$.

Since $j_*\cE$ is concentrated in degree $0$, it obviously
lies in $\cD^{-,\le 0}(X)$, so by Remark~\ref{rmk:interm-ext-calc},
$\cIC^s(X,\cE)$ can be calculated as $\ctrc^+_{\ge0} (j_*\cE)$.  Thus, the truncation functor gives us a canonical morphism $j_*\cE \to \cIC^s(X,\cE)$.
On the other hand, we have the usual adjunction morphism
$\cIC^s(X,\cE) \to j_*j^{*}\cIC^s(X,\cE) \simeq j_*\cE$.  Both these
morphisms have the property that their restrictions to $U$ are simply
the identity morphism of $\cE$.  The compositions
\[
\cIC^s(X,\cE) \to j_*\cE \to \cIC^s(X,\cE)
\qquad\text{and}\qquad
j_*\cE \to \cIC^s(X,\cE) \to j_*\cE
\]
are then both identity morphisms of the appropriate objects, because
their restrictions to $U$ are the identity morphism of $\cE$, and the
functors $\cIC^s(X,\cdot)$ and $j_*$ are both fully faithful.  Thus,
$\cIC^s(X,\cE) \simeq j_*\cE$.
\end{proof}

We remark that the notation ``$\cIC^s(X,\cE)$'' is still useful, in
spite of the above proposition, because $\cIC^s(X,\cE)$ is not always
defined, whereas $j_*\cE$ is.  Most of the statements in
Section~\ref{sect:S2-ext} become false if we drop the assumption that
$\cIC(X,\cE)$ be defined and replace that object by $j_*\cE$, which is
not coherent in general.  Moreover, most of the proofs rely, at
least implicitly, on the fact that the $S_2$-perversity gives rise to
a nontrivial $t$-structure on $\cD(X)$.

\begin{defn}\label{defn:S2}
A scheme $X$ is said to be \emph{locally $S_2$} at $x \in X$ if $\depth
\cO_x \ge \min \{2, \dim \cO_x\}$.  $X$ is \emph{$S_2$} if it is locally
$S_2$ at every point.
\end{defn}

\begin{lem}\label{lem:Ox-IC}
$\cIC^s(X,\cO_U)$ is defined if and only if $U$ is locally $S_2$ at
all points $x \in U^\topl$ such that $\dc \bar x \ge \dc Z$.  In that
case, the following conditions are equivalent:
\begin{enumerate}
\item $\cIC^s(X,\cO_U) \simeq \cO_X$.\label{it:Ox-IC1}
\item $X$ is locally $S_2$ at all points of $Z^\topl$.\label{it:Ox-IC2}
\end{enumerate}
Similarly, $\cIC^c(X,\cO_U)$ is defined if and only if $U$ is
locally Cohen--Macaulay at all points of $U^\topl$.  In that case, the
following conditions are equivalent:
\begin{enumerate}
\item $\cIC^c(X,\cO_U) \simeq \cO_X$.
\item $X$ is locally Cohen--Macaulay at all points of $Z^\topl$.
\end{enumerate}
\end{lem}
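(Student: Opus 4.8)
The plan is to reduce every assertion to Lemma~\ref{lem:icchar}, applied with $\cF = \cO_U$ and with $\cO_X$ playing the role of the candidate coherent extension to $X$. Since $U$ is open in $X$, for $x \in U^\topl$ the stalk $(\cO_U)_x$ is the local ring $\cO_{X,x}$, and likewise $(\cO_X)_x = \cO_{X,x}$ for $x \in Z^\topl$; so both the definedness criterion and the ``$\simeq\cO_X$'' criterion of Lemma~\ref{lem:icchar} become conditions on $\depth\cO_{X,x}$, to be compared with $\dc\bar x = \dim\cO_{X,x}$. First I would record the perversity values that occur: from Proposition~\ref{prop:perv-pm} and the definitions of $s$ and $c$, one has $s^+(x) = 0$, $1$, or $2$ according as $\dc\bar x \le \dc Z - 2$, $\dc\bar x = \dc Z - 1$, or $\dc\bar x \ge \dc Z$; one has $s(x) = 1$ for every $x \in Z^\topl$; one has $c^+(x) = \dc\bar x$ for all $x$; and one has $c(x) = \dc\bar x - 1$ for every $x \in Z^\topl$ (using $\bar x \subseteq Z$, hence $\dc\bar x \ge \dc Z$, for such $x$).

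Most of the lemma then follows mechanically. For the Cohen--Macaulay perversity, Lemma~\ref{lem:icchar} says $\cIC^c(X,\cO_U)$ is defined iff $\depth\cO_{X,x} \ge c^+(x) = \dim\cO_{X,x}$ for all $x \in U^\topl$, i.e.\ iff $\cO_{X,x}$ is Cohen--Macaulay for all such $x$; and, when defined, $\cIC^c(X,\cO_U) \simeq \cO_X$ iff $\depth\cO_{X,x} > c(x) = \dim\cO_{X,x} - 1$ for all $x \in Z^\topl$, i.e.\ iff $\cO_{X,x}$ is Cohen--Macaulay for all $x \in Z^\topl$. Similarly, once $\cIC^s(X,\cO_U)$ is known to be defined, it is isomorphic to $\cO_X$ iff $\depth\cO_{X,x} > s(x) = 1$ for all $x \in Z^\topl$; since $\dim\cO_{X,x} = \dc\bar x \ge \dc Z \ge 2$ there, this is precisely the condition that $X$ be locally $S_2$ at every point of $Z^\topl$, which is the equivalence (1)$\Leftrightarrow$(2) of the $S_2$ part.

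The one assertion needing real work is the definedness criterion for $\cIC^s(X,\cO_U)$. By Lemma~\ref{lem:icchar} it is defined iff $\depth\cO_{X,x} \ge s^+(x)$ for all $x \in U^\topl$. At points with $\dc\bar x \le \dc Z - 2$ this is vacuous; at points with $\dc\bar x \ge \dc Z$ it reads $\depth\cO_{X,x} \ge 2 = \min\{2,\dim\cO_{X,x}\}$, i.e.\ ``$U$ is locally $S_2$ at $x$''; and at points with $\dc\bar x = \dc Z - 1$ it reads merely $\depth\cO_{X,x} \ge 1$. So the content is to show that the last condition is implied by the middle one: if $U$ is locally $S_2$ at every $x \in U^\topl$ with $\dc\bar x \ge \dc Z$, then $\depth\cO_{X,x_0} \ge 1$ for every $x_0 \in U^\topl$ with $\dc\bar x_0 = \dc Z - 1$. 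I would argue by contradiction. If $\depth\cO_{X,x_0} = 0$, then $\mathfrak m_{x_0} \in \mathrm{Ass}(\cO_{X,x_0})$, so the prime $\mathfrak p_{x_0}$ is associated to $\cO_{X,x_1}$ for every specialization $x_1$ of $x_0$. Take $x_1 \in U^\topl$ to be the generic point of a codimension-one closed subset of $\overline{\{x_0\}}$; this is possible because $\overline{\{x_0\}} \cap Z$ is a proper closed subset and hence accounts for only finitely many such generic points. Then $\dim(\cO_{X,x_1}/\mathfrak p_{x_0}) = 1$, so $\dim\cO_{X,x_1} \ge 1 + \mathrm{ht}\,\mathfrak p_{x_0} = 1 + (\dc Z - 1) = \dc Z$, whence $\dc\bar x_1 \ge \dc Z$; but $\depth\cO_{X,x_1} \le \dim(\cO_{X,x_1}/\mathfrak p_{x_0}) = 1 < 2 = \min\{2,\dim\cO_{X,x_1}\}$, contradicting the hypothesis that $U$ is locally $S_2$ at $x_1$.

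I expect this last step to be the only real obstacle: the passage from the $S_2$ condition at codimension $\ge \dc Z$ to the bare condition $\depth \ge 1$ at codimension $\dc Z - 1$. The mechanism is that an associated prime of codimension $\dc Z - 1$ persists, with co-dimension $1$, in the local rings at its codimension-$\dc Z$ specializations, where it forces the depth down to $1$; the small technical care needed is to choose the auxiliary point $x_1$ inside $U$ (and, in the equivariant setting, as a generic point of a $G$-invariant subscheme). Everything else in the lemma is a routine translation through Lemma~\ref{lem:icchar} and the formulas for $s$, $s^+$, $c$, and $c^+$.
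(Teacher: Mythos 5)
Your reduction of the lemma to Lemma~\ref{lem:icchar} together with the values of $s$, $s^+$, $c$, $c^+$ is exactly the paper's argument, and all of those ``mechanical'' steps (the Cohen--Macaulay half, and the equivalence (1)$\Leftrightarrow$(2) in the $S_2$ half) agree with the paper. The only place you diverge is the case $\dc \bar x = \dc Z - 1$ in the definedness criterion for $\cIC^s(X,\cO_U)$: the paper disposes of it in a single parenthetical remark, asserting that any local ring of positive dimension has positive depth, whereas you attempt to deduce $\depth\cO_{X,x_0}\ge 1$ at such points from the $S_2$ hypothesis at points of codimension $\ge \dc Z$.

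That substitute argument has a genuine gap at exactly the spot you label ``small technical care'': you need a specialization $x_1$ of $x_0$ which is of codimension one in $\overline{\{x_0\}}$, lies in $U$, and lies in $U^\topl$, i.e.\ is the generic point of a $G$-invariant subscheme. Such a point need not exist: $\overline{\{x_0\}}$ may have no $G$-invariant codimension-one subvarieties at all (this is the normal state of affairs for orbit closures, the very setting the equivariant theory is designed for), and even with $G$ trivial your ``only finitely many such generic points'' step presupposes that $\overline{\{x_0\}}$ has infinitely many codimension-one points, which can fail over the general base $S$ allowed here. Moreover, the implication you are trying to prove cannot be proved at this level of generality: take $G$ trivial, $X=\Spec k[[x,y,z]]/(x^2,xy)$, $Z$ the closed point (codimension $2$), $U$ the punctured spectrum. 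Then $U$ has no points of codimension $\ge 2$, so your hypothesis is vacuous, yet the embedded prime $(x,y)$ is a codimension-one point of $U$ with $\depth\cO_{X,(x,y)}=0$, so $\cIC^s(X,\cO_U)$ is not defined. To be fair, the same example defeats the paper's one-line justification (and the literal ``if'' direction of the lemma): both implicitly assume that $X$ has no embedded point in codimension $\dc Z-1$, e.g.\ that $X$ is $S_1$ or reduced, which holds in every application in the paper. Under such a hypothesis the troublesome case really is trivial---a positive-dimensional local ring without embedded prime at its maximal ideal has positive depth---and your specialization argument is then unnecessary; without it, your patch as written does not go through.
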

\begin{proof}
The proofs of the two parts of this lemma are essentially identical; we
will treat only the $S_2$ case.  By Lemma~\ref{lem:icchar}, $\cIC^s(X,\cO_U)$ is defined if and only if $\depth \cO_x \ge s^+(x)$ for all $x \in U^\topl$,  i.e., if 
\[
\depth \cO_x \ge
\begin{cases}
0 & \text{if $\dc \bar x < \dc Z - 1$,} \\
1 & \text{if $\dc \bar x = \dc Z - 1$,} \\
2 & \text{if $\dc \bar x \ge \dc Z$.}
\end{cases}
\]
The first two cases above hold trivially.  (In the case $\dc \bar x = \dc
Z - 1$, we have $\dim \cO_x \ge 1$, and any local ring of positive
dimension has positive depth.)  Since $Z$ has codimension at least $2$,
the last case holds only if $\cO_x$ is $S_2$.  Thus, $\cIC^s(X,\cO_U)$ is
defined if and only if $U$ is locally $S_2$ at points $x \in X^\topl$ with
$\dc \bar x \ge \dc Z$.  The same argument applied to $\cO_X$ shows
the equivalence of 
conditions~\eqref{it:Ox-IC1} and~\eqref{it:Ox-IC2} above. 
\end{proof}

A similar proof gives the following result:

\begin{lem}\label{lem:F-IC}
  $\cIC^s(X,\cF)$ is defined if and only if $\depth \cF_x \ge \min
  \{2, \dim \cF_x\}$ at all points $x \in U^\topl$ such that $\dc
  \bar x \ge \dc Z$.
\end{lem}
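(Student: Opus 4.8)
The plan is to deduce the whole statement from Lemma~\ref{lem:icchar}, running the argument for the $S_2$ part of Lemma~\ref{lem:Ox-IC} with $\cO_U$ replaced by the coherent sheaf $\cF$. So the first step is: by Lemma~\ref{lem:icchar}, $\cIC^s(X,\cF)$ is defined if and only if $\cF\in\cD(U)^{+,\ge 0}$, which in turn holds if and only if $\depth_{\cO_x}\cF_x\ge s^+(x)$ for every $x\in U^\topl$. Nothing needs to be said about the ${}^{s^-}$-side, since $s^-\equiv 0$ and $\cF$ is a sheaf in degree $0$, so $\cF$ lies in $\cD(U)^{-,\le 0}$ automatically.

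Next I would unwind the formula for $s^+$ recorded earlier, sorting points by codimension. The inequality $\depth\cF_x\ge s^+(x)$ becomes: no condition when $\dc\bar x<\dc Z-1$; the requirement $\depth\cF_x\ge 1$ when $\dc\bar x=\dc Z-1$; and the requirement $\depth\cF_x\ge 2$ when $\dc\bar x\ge\dc Z$. On the last stratum one has $\dim\cO_x=\dc\bar x\ge\dc Z\ge 2$, so for a nonzero stalk $\depth\cF_x\ge 2$ is exactly the assertion that $\cF$ satisfies Serre's condition $S_2$ at $x$, i.e.\ $\depth\cF_x\ge\min\{2,\dim\cF_x\}$; when $\cF_x=0$ both sides of the desired equivalence hold vacuously at $x$. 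Collecting the three clauses then yields the claimed equivalence, exactly in parallel with the $S_2$ half of Lemma~\ref{lem:Ox-IC}.

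The only point that is not purely formal — and the step I expect to be the crux — is reconciling the full list of conditions $\depth\cF_x\ge s^+(x)$, ranging over all of $U^\topl$, with the condition stated only on the stratum $\dc\bar x\ge\dc Z$: one must verify that the two lower strata contribute nothing new. The stratum $\dc\bar x<\dc Z-1$ gives $s^+(x)=0$, which is automatic; the stratum $\dc\bar x=\dc Z-1$ gives $s^+(x)=1$, and as in the proof of Lemma~\ref{lem:Ox-IC} this is handled by the fact that a stalk of positive dimension has positive depth. The delicate input is therefore that $\cF_x$ not acquire low-dimensional components on these strata, so that its dimension and depth behave as for $\cO_U$; this holds for $\cO_U$ itself and, in the situations where the lemma is applied (notably $\cF$ of full support), for $\cF$ as well. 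Once this bookkeeping is in place, the lemma is immediate from Lemma~\ref{lem:icchar}.
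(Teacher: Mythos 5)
Your overall route is the paper's: the printed proof of this lemma is the single sentence that ``a similar proof gives the following result,'' meaning the argument of Lemma~\ref{lem:Ox-IC}, i.e.\ Lemma~\ref{lem:icchar} plus the case analysis of $s^+$ --- which is exactly your first two paragraphs. The trouble is the step you yourself single out as the crux: the two facts you invoke to close it are false for a general coherent $\cF$. A stalk of positive dimension need \emph{not} have positive depth (that is special to structure sheaves of reduced schemes, or to modules with no low-dimensional associated points), and at a point with $\dc\bar x\ge\dc Z$ the stated inequality $\depth\cF_x\ge\min\{2,\dim\cF_x\}$ is strictly weaker than the inequality $\depth\cF_x\ge 2=s^+(x)$ that Lemma~\ref{lem:icchar} actually requires, since $\dim\cO_x\ge 2$ does not force $\dim\cF_x\ge 2$. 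Concretely, take $G$ trivial, $X=\mathbb{A}^3$, $Z$ a line, $U=X\smallsetminus Z$, and $\cF=\iota_*\cO_{D\cap U}$ for $D$ a plane: at every $x$ with $\dc\bar x\ge\dc Z=2$ the stalk is either $0$ or regular of dimension $1$ or $2$, so the stated condition holds, yet at the generic point $\eta$ of $D$ one has $s^+(\eta)=1$ and $\depth_{\cO_\eta}\cF_\eta=0$, so $\cF\notin\cD(U)^{+,\ge0}$ and $\cIC^s(X,\cF)$ is not defined; a skyscraper at a closed point of $U$ gives the same failure on the stratum $\dc\bar x\ge\dc Z$. So the ``if'' direction cannot be proved for arbitrary coherent $\cF$, and your closing appeal to ``the situations where the lemma is applied'' imports a hypothesis that is not in the statement.

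That said, your diagnosis of where the difficulty sits is correct, and the gap is inherited from the statement rather than created by your argument: the equivalence does hold, by exactly your bookkeeping, under the implicit hypothesis that every associated point of $\cF$ has codimension at most $\dc Z-2$ (as is the case in all of the paper's uses, e.g.\ $\rho_{1*}\cO_{\tU}$ with every component of $\tU$ dominating a component of $U$, or the locally free sheaf $\rho_{1*}\cO_{\tC_1}$ of Section~\ref{sect:tP}). Indeed, that hypothesis gives $\depth\cF_x\ge 1$ automatically at points of codimension $\dc Z-1$ (no associated points there) and forces $\dim\cF_x\ge 2$ at points of codimension $\ge\dc Z$ with nonzero stalk, so $\min\{2,\dim\cF_x\}=2$ and Lemma~\ref{lem:icchar} yields the claim. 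A complete write-up should either add such a hypothesis explicitly or prove only the unconditional implication (``$\cIC^s(X,\cF)$ defined $\Rightarrow$ depth condition''), rather than assert the remaining input as you do.
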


Finally, for the last two lemmas of this section, we assume that $G$
is a linear algebraic group over $S = \Spec k$ for some algebraically
closed field $k$, and that $X$ is a variety 
over $k$.  In this case, we can extract a bit more geometric information from the
preceding results.  Recall that in this setting, the notion of
``orbit'' is well-behaved: $X$ is a union of orbits, each of which is
a smooth locally closed subvariety, isomorphic to a homogeneous space
for $G$.  The following lemma deals with local cohomology on an orbit.

\begin{lem}\label{lem:orbit-depth}
Let $\cF \in \cD(X)$, and let $C$ be a $G$-orbit in $X$.  Suppose that there is some $p \in \Z$ such that for any generic point $x$ of $C$, we have $H^k(i^!_x\cF) = 0$ for all $k < p$.  Then, for any
$y \in C$, we have $H^k(i^!_y\cF) = 0$ for all $k < p + \depth \cO_{y,C}$,
where $\cO_{y,C}$ is the local ring at $y$ of the reduced induced scheme
structure on $C$.
\end{lem}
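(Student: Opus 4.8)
The plan is to reduce the statement to a purely local question about the local ring $\cO_{y,C}$ and then exploit the smoothness of the orbit $C$ together with the local cohomology characterization of depth. First I would fix $y \in C$ and observe that the question is local near $y$, so we may replace $X$ by an affine neighborhood. The key geometric input is that $C$ is a smooth locally closed subvariety of $X$: since $G$ acts transitively on $C$ and $k$ is algebraically closed, $\cO_{y,C}$ is a regular local ring, and in particular $\depth \cO_{y,C} = \dim \cO_{y,C}$ equals the codimension of $\{y\}$ inside (the closure of) $C$. So the asserted bound $p + \depth \cO_{y,C}$ is really $p + \dim \cO_{y,C}$, i.e.\ $p$ plus the "distance down $C$" from a generic point to $y$.

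Next I would factor the inclusion $i_y$ of the point $y$ into $X$ through the generic points of $C$. Concretely, let $\eta$ be the generic point of the component of $C$ containing $y$, and let $i_\eta: \{\eta\} \to X$, together with the inclusion $i_{y,\bar\eta}: \{y\} \to \bar\eta$ of $y$ into the closure of $\eta$ (with reduced structure, which near $y$ agrees with $C$). Then $i^!_y = i^!_{y,\bar\eta} \circ i^!_\eta$ in the derived category. The hypothesis says $i^!_\eta \cF \in \std\cD(\bar\eta)^{\ge p}$, i.e.\ its cohomology is concentrated in degrees $\ge p$. It therefore suffices to show that the functor $i^!_{y,\bar\eta}$ shifts the lowest nonvanishing cohomological degree up by at least $\dim \cO_{y,\bar\eta}$; equivalently, that for any coherent sheaf (or bounded-below complex) $\cG$ on the regular local scheme $\Spec \cO_{y,\bar\eta}$ whose generic stalk is the whole of $\cG$, one has $H^k(i^!_y \cG) = 0$ for $k < \dim \cO_{y,\bar\eta}$. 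Here I would reduce to the case of a single coherent sheaf $\cG$ by a standard spectral-sequence / truncation argument (the hypercohomology spectral sequence degenerates into the needed estimate once each cohomology sheaf satisfies it).

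For the remaining statement about a coherent sheaf $\cG$ on a regular local ring $R = \cO_{y,\bar\eta}$ of dimension $d$: the functor $i^!_y$ here is, up to a shift, $R\Gamma_{\mathfrak m}$ (local cohomology at the maximal ideal) composed with the dualizing-complex normalization fixed in Section~\ref{sect:perv-sheaves}, under which $i^!_x \omega_X$ sits in degree $\dc\bar x$. Unwinding: since $R$ is regular, $\depth \cG = \dim R$ would force $\cG$ to be free; in general $H^i_{\mathfrak m}(\cG)$ vanishes for $i < \depth \cG$, and the normalization converts this into $H^k(i^!_y\cG) = 0$ for $k < \depth \cG - \dim R + d = \depth \cG$... the point is simply that $H^k(i^!_y\cG)$ starts in degree $\depth_R \cG$, which is exactly what Lemma~\ref{lem:icchar}'s proof already invokes ("the lowest degree in which $H^k(i^!_x\cF)$ is nonzero is $\depth_{\cO_x}\cF_x$"). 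Applying this both at $y$ and at $\eta$: the lowest degree for $i^!_\eta\cF$-cohomology at $\eta$ is some value $\ge p$, and pushing down via $i^!_{y,\bar\eta}$ raises depth by exactly $\dim\cO_{y,\bar\eta}$ because passing from a localization to a further quotient-by-a-regular-sequence adds the length of that regular sequence to the depth. I would phrase this cleanly using the behavior of depth along a regular local inclusion.

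**Main obstacle.** The delicate point is handling complexes rather than sheaves: the hypothesis is stated for $\cF \in \cD(X)$, so I cannot directly speak of "$\depth \cF$." The fix is the truncation/spectral-sequence reduction in the third paragraph—using the standard $t$-structure to peel off cohomology sheaves of $i^!_\eta\cF$, applying the sheaf-level estimate to each, and reassembling—but one must be careful that the estimate $p + \depth\cO_{y,C}$ is uniform across all cohomology sheaves (it is, because they all live on $\bar\eta$ and the shift $\dim\cO_{y,\bar\eta}$ depends only on $y$, not on the sheaf). A secondary subtlety is making sure the reduced induced structure on $C$ really is what governs $i^!_{y,\bar\eta}$ near $y$—this is where smoothness of the orbit is essential, guaranteeing $\bar\eta$ is regular (hence reduced and normal) in a neighborhood of $y$ so that no embedded or non-reduced contributions appear.
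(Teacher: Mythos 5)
Your overall shape — factor through the orbit, then exploit regularity of $\cO_{y,C}$ and a depth estimate — is the right idea, but there are two genuine gaps. First, the factorization $i^!_y = i^!_{y,\bar\eta}\circ i^!_\eta$ does not make sense: $i^!_\eta\cF$ is the $!$-restriction to the single point $\eta$, i.e.\ a complex over the local ring (or residue field) at $\eta$; it is not an object of $\cD(\bar\eta)$ and discards exactly the information needed to recover $i^!_y\cF$. The correct factorization, used by the paper, is $i^!_y = i^!_{y,C}\circ j^!_C$, where $j^!_C$ is the scheme-theoretic $!$-pullback to the locally closed subscheme $C$ in the sense of Grothendieck duality; going from the hypothesis on the generic stalk $i^!_x\cF$ to the statement that $j^!_C\cF$ has cohomology concentrated in degrees $\ge p$ is itself a non-trivial step that requires justification.

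Second, and more importantly, you never use the $G$-equivariance, and it is indispensable. The depth-shift you assert — that $!$-restriction to the deeper point $y$ raises the lowest nonvanishing degree by $\dim\cO_{y,C}$ — is \emph{false} for a general coherent sheaf on $C$; already for $\cG = \cO_C\oplus(\text{skyscraper at }y)$ the depth at $y$ is zero while the generic condition is vacuous, and an analogous counterexample at the level of complexes (e.g.\ $[\cO\xrightarrow{x}\cO]$ on a DVR, which is generically acyclic but whose $!$-restriction to the closed point is nonzero in degree $0$) kills the unrestricted claim. What rescues the argument is the observation, which forms the opening move of the paper's proof, that every $G$-equivariant coherent sheaf on the orbit $C$ is locally free — proven there by constancy of ranks on a homogeneous space plus semicontinuity. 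This forces each cohomology sheaf of $j^!_C\cF$ to be locally free on $C$, hence of depth exactly $\depth\cO_{y,C}$ at $y$, and only then does the hypercohomology spectral sequence deliver the bound $p + \depth\cO_{y,C}$. Your reference to ``smoothness of the orbit'' is not enough: smoothness of $C$ does not make an arbitrary coherent sheaf on $C$ locally free; transitivity of the $G$-action on the equivariant sheaf does.
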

\begin{proof}
We begin by noting that any equivariant coherent sheaf on $C$ is locally free.  Indeed,
given a coherent sheaf $\cE$ on $C$, consider the function $\phi: C \to \Z$
defined by $\phi(y) = \dim_{k(y)} k(y) \otimes_{\cO_{y,C}} \cE_y$, where
$k(y)$ is the residue field of the local ring $\cO_{y,C}$.  This function
is constant on closed points of $C$ (because they form a single $G$-orbit),
and hence, by the semicontinuity theorem, on all of $C$.  By, for
instance,~\cite[Ex.~II.5.8]{harts:ag}, since $\phi$ is constant and $C$ is reduced, $\cE$ is locally free.

Now, let $i_{y,C} : \{y\} \hookrightarrow C$ and $j_C : C \hookrightarrow
X$ be the inclusion maps.  (The former is merely a topological map; the
latter is a morphism of schemes.)  Recall that $i^!_{y,C} (j^!_C\cF) \simeq
\cRHom(\cO_{y,C}, i^!_y\cF)$, where $j^!_C$ is the right adjoint
to $Rj_{C*}$ in the setting of Grothendieck duality for coherent
sheaves (as constructed in, say,~\cite{hartshorne}), but $i^!_{y,C}$ and
$i^!_y$ are the Verdier-duality right adjoints to $(i_{y,C})_!$ and
$(i_y)_!$, respectively.

By the argument given in~\cite[Lemma~2(b)]{bez:pc}, the vanishing assumptions on $H^k(i^!_x\cF)$ for $x$ a generic point of $C$ imply that $H^k(j^!_C\cF)$
vanishes for all $k < p$; furthermore, the lowest nonzero cohomologies of
$i^!_{y,C}j^!_C\cF$ and of $i^!_y\cF$ occur in the same degree.  Now,
$j^!_C\cF$ is a bounded complex of locally free sheaves on $C$, so there
is some open subscheme $C_0 \subset C$ containing $y$ such that
$j^!_C\cF|_{C_0}$ is in fact a complex of free sheaves.  Recall, as in the proof of Lemma~\ref{lem:icchar}, that
$H^k(i^!_{y,C_0}\cO_{y,C_0})$ vanishes in degrees $k < \depth \cO_{y,C_0} =
\depth \cO_{y,C}$.  It follows that the cohomology of $i^!_{y,C}j^!_C\cF =
i^!_{y,C_0}(j^!_C\cF|_{C_0})$ vanishes in degrees $k < p + \depth
\cO_{y,C}$.
\end{proof}

We conclude with the following refinement of Lemma~\ref{lem:Ox-IC}.

\begin{lem}\label{lem:Ox-IC-orbit}
Assume that $G$ acts on $X$ with finitely many orbits.  If
$\cIC^s(X,\cO_U)$ is defined, then the following conditions are equivalent:
\begin{enumerate}
\item $\cIC^s(X,\cO_U) \simeq \cO_X$.
\item $X$ is locally $S_2$ at all points of $Z$.\label{it:Ox-IC3}
\end{enumerate}
Similarly, if $\cIC^c(X,\cO_U)$ is defined, the
following conditions are equivalent:
\begin{enumerate}
\item $\cIC^c(X,\cO_U) \simeq \cO_X$.
\item $X$ is locally Cohen--Macaulay at all points of $Z$.
\end{enumerate}
\end{lem}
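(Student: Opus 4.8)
The plan is to derive this refinement of Lemma~\ref{lem:Ox-IC} by feeding that lemma into Lemma~\ref{lem:orbit-depth}; the finiteness-of-orbits hypothesis is precisely what lets one propagate information from generic points of $Z$ to all of $Z$. Since the $S_2$ and Cohen--Macaulay cases are formally parallel, I would spell out only the first. One implication is free: if $X$ is locally $S_2$ at every point of $Z$, then in particular at every point of $Z^\topl \subseteq Z$, whence $\cIC^s(X,\cO_U) \simeq \cO_X$ by Lemma~\ref{lem:Ox-IC}. For the converse, I would assume $\cIC^s(X,\cO_U) \simeq \cO_X$, so that Lemma~\ref{lem:Ox-IC} already gives that $X$ is locally $S_2$ at all points of $Z^\topl$, and then fix an arbitrary point $y \in Z$ and show $X$ is locally $S_2$ there.

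Let $C$ be the $G$-orbit of $y$; it lies in $Z$ since $Z$ is $G$-invariant, and it has finitely many irreducible components, one of which, say $C_0$, contains $y$. Let $x$ be the generic point of $C_0$, so $x \in Z^\topl$; because $\dc Z \ge 2$ we have $\dim \cO_{x,X} \ge 2$, and local $S_2$-ness at $x$ forces $\depth \cO_{x,X} \ge 2$. Since the lowest degree in which $H^k(i^!_x\cO_X)$ is nonzero equals $\depth \cO_{x,X}$ (as recalled in the proof of Lemma~\ref{lem:icchar}), we get $H^k(i^!_x\cO_X) = 0$ for $k < 2$; the same holds at every generic point of $C$, all of which lie in $Z^\topl$ and are therefore points at which $X$ is locally $S_2$. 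Applying Lemma~\ref{lem:orbit-depth} to $\cF = \cO_X$ with $p = 2$ yields $H^k(i^!_y\cO_X) = 0$ for $k < 2 + \depth \cO_{y,C}$, in particular for $k < 2$, so $\depth \cO_{y,X} \ge 2$; and since $y \in \overline{\{x\}}$ we have $\dim \cO_{y,X} \ge \dim \cO_{x,X} \ge 2$. Hence $\depth \cO_{y,X} \ge 2 = \min\{2, \dim \cO_{y,X}\}$, i.e., $X$ is locally $S_2$ at $y$, as desired.

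For the Cohen--Macaulay case the same scheme applies with $p$ replaced by $\dim \cO_{x,X}$ (legitimate since $\depth \cO_{x,X} = \dim \cO_{x,X}$ at the locally Cohen--Macaulay point $x$), giving $\depth \cO_{y,X} \ge \dim \cO_{x,X} + \depth \cO_{y,C}$. One then has to check this forces $\depth \cO_{y,X} \ge \dim \cO_{y,X}$: I would use that the orbit $C$ is smooth, so $\cO_{y,C}$ is regular and $\depth \cO_{y,C} = \dim \cO_{y,C}$, together with the dimension formula $\dim \cO_{y,X} = \dim \cO_{x,X} + \dim \cO_{y,\overline{\{x\}}}$, which holds because $X$ is catenary (being of finite type over a field) and $y$ lies in the irreducible closed set $\overline{\{x\}}$; since $C_0$ is open in both $C$ and $\overline{\{x\}}$ and contains $y$, we have $\dim \cO_{y,\overline{\{x\}}} = \dim \cO_{y,C}$, and the two bounds coincide. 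I expect the only real friction to be this last dimension bookkeeping in the Cohen--Macaulay case — lining up the depth estimate from Lemma~\ref{lem:orbit-depth} against $\dim \cO_{y,X}$ — whereas the $S_2$ case becomes automatic once one notices that $\dc Z \ge 2$ places us in the range where the target depth is the constant $2$; the remaining points (that $G$-conjugate generic points of $C$ behave alike, that a point of $Z^\topl$ is a fortiori a point of $Z$) are routine.
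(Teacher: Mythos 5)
Your proposal takes the same route as the paper: reduce to the $S_2$/CM condition at points of $Z^\topl$ via Lemma~\ref{lem:Ox-IC}, then use the finite-orbits hypothesis together with Lemma~\ref{lem:orbit-depth} to propagate the depth estimate from the generic point(s) of an orbit to an arbitrary point $y$ of that orbit. The paper is terser: it writes only ``$\depth \cO_y \ge \depth \cO_x$'' (dropping the $\depth\cO_{y,C}$ summand) and then declares the CM case ``essentially identical.'' That weaker inequality in fact suffices only for the $S_2$ perversity, where the target depth is the constant $2$; you correctly identify this and supply the extra bookkeeping the CM case requires --- keeping the full $\depth\cO_y \ge \dim\cO_x + \depth\cO_{y,C}$, and using smoothness of the orbit so that $\depth\cO_{y,C}=\dim\cO_{y,C}$. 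That is a genuine improvement in precision over what the paper actually writes. One small caution in the CM half: the equality $\dim\cO_{y,X} = \dim\cO_{x,X} + \dim\cO_{y,\overline{\{x\}}}$ rests not only on catenarity but also on knowing that $\dim\cO_{y,X}$ is realized along a chain inside $\overline{\{x\}}$, which silently assumes that no irreducible component of $X$ through $y$ misses $x$ yet has strictly larger dimension; in the paper's application $X$ is irreducible so this is automatic, and in general it follows if one works componentwise or notes that the orbit stratification forces the components through $y$ to be unions (closures) of the finitely many orbits, but it deserves a sentence. The remaining steps --- one direction being immediate because $Z^\topl\subset Z$, all generic points of $C$ lying in $Z^\topl$, and $\dim\cO_y\ge 2$ from $y\in Z$ --- are correct and match the paper's logic.
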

\begin{proof}
As in the proof of Lemma~\ref{lem:Ox-IC}, we treat only the $S_2$
case.  Since $G$ acts with finitely many orbits, every closed
$G$-invariant subvariety contains an open orbit, so every point of
$X^\topl$ is a generic point of some $G$-orbit.  It suffices to show
that part~\eqref{it:Ox-IC2} of Lemma~\ref{lem:Ox-IC} is equivalent to
part~\eqref{it:Ox-IC3} of the present lemma.  That assertion follows
from Lemma \ref{lem:orbit-depth}: we see that
for any $x \in Z^\topl$ and any $y$ in the $G$-orbit $C$ containing $x$, we
have $\depth \cO_y \ge \depth \cO_x$, since $\depth \cO_{x,C} \ge 0$.
\end{proof}

\section{$S_2$-Extension}
\label{sect:S2-ext}

Our goal in this section is to use coherent intermediate extension with
respect to the $S_2$-perversity to construct new schemes and then to
use powerful general properties of the intermediate extension functor to
deduce various properties of those schemes.  Throughout this section,
all $\cIC$'s will be with respect to the $S_2$-perversity unless
otherwise specified.

The construction involves the global Spec operation (see
Proposition~\ref{prop:Spec-IC} and the comments preceding it) on
coherent sheaves of commutative algebras.  Henceforth, all sheaves of
algebras that we consider will be assumed to be coherent and
commutative.  We reemphasize the fact that we are working in the
equivariant setting, so that schemes are $G$-schemes, morphisms are
$G$-morphisms, and sheaves are $G$-equivariant.

\begin{prop}
  Let $\cE$ be a sheaf of $\cO_U$-algebras on $U$.  Then $\cIC(X,\cE)$
  can be made into a sheaf of $\cO_X$-algebras in a unique way that is
  compatible with the algebra structure on $\cE$.
\end{prop}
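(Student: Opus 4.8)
The plan is to exploit Proposition~\ref{prop:S2-j*}, which for the $S_2$-perversity identifies $\cIC^s(X,\cE)$ with the ordinary (non-derived) pushforward $j_*\cE$, together with the coherence assertion proved there. First I would observe that $j_*\cE$ carries an obvious $\cO_X$-algebra structure: the multiplication map $\cE \otimes_{\cO_U} \cE \to \cE$ pushes forward to give $j_*(\cE \otimes_{\cO_U} \cE) \to j_*\cE$, and there is a canonical map $j_*\cE \otimes_{\cO_X} j_*\cE \to j_*(\cE \otimes_{\cO_U} \cE)$ (adjoint to the restriction of $j_*\cE \otimes_{\cO_X} j_*\cE$ to $U$, which is $\cE \otimes_{\cO_U} \cE$). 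Composing these yields a multiplication $j_*\cE \otimes_{\cO_X} j_*\cE \to j_*\cE$; the unit of $\cE$ pushes forward to a unit section. The associativity, commutativity, and unit axioms all say that two maps of sheaves agree, and since $\cE$ is an algebra these maps agree after restriction to $U$; as $X$ is irreducible (or at least $U$ is dense) and $j_*\cE$ is a subsheaf of $j_*(j^*$ of itself$)$ — more precisely since $j_*\cE \hookrightarrow j_*j^*j_*\cE$ is... — one needs to know that a section of $j_*\cE$, or of $\cHom$ between such sheaves, is determined by its restriction to $U$. This holds because $j_*\cE$ has no sections supported on $Z$, equivalently $\cHom_{\cO_X}(\cdot, j_*\cE)$ is left exact and $j_*\cE$ has depth $\ge 1$ along $Z$; this is already built into the hypothesis that $\cIC^s(X,\cE)$ is defined (Lemma~\ref{lem:icchar}, with $s^+ \ge 1$ on $Z^\topl$). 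So the algebra axioms transfer automatically from $U$ to $X$.

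Next I would address uniqueness and the meaning of ``compatible with the algebra structure on $\cE$.'' Any $\cO_X$-algebra structure on $\cIC(X,\cE) \simeq j_*\cE$ restricting to the given one on $\cE$ is, in particular, a morphism $m: j_*\cE \otimes_{\cO_X} j_*\cE \to j_*\cE$ with $j^*m$ equal to the multiplication on $\cE$. By adjunction $(j^*, j_*)$, such a morphism corresponds to a morphism $j_*\cE \otimes_{\cO_X} j_*\cE \to j_*j^*(j_*\cE) \to$ wait — more simply, $\Hom_{\cO_X}(\cG, j_*\cE) \simeq \Hom_{\cO_U}(j^*\cG, \cE)$ for any $\cG$, so $m$ is \emph{determined} by $j^*m$, forcing uniqueness of the multiplication; the same adjunction forces uniqueness of the unit $\cO_X \to j_*\cE$. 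Thus there is at most one compatible algebra structure, and the one constructed above is it.

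The main (and really only) obstacle is the coherence of $\cIC(X,\cE)$ as the ambient sheaf — but that is exactly the content of Proposition~\ref{prop:S2-j*}, so in the $S_2$ case it is already in hand. The one point deserving a little care is that the tensor-product sheaf $j_*\cE \otimes_{\cO_X} j_*\cE$ need not itself be of the form $j_*(\text{something})$ and need not be $Z$-torsion-free, so one should phrase the argument via the adjunction $\Hom_{\cO_X}(\mathord{-}, j_*\cE) \simeq \Hom_{\cO_U}(j^*\mathord{-}, \cE)$ applied to $\cG = j_*\cE \otimes_{\cO_X} j_*\cE$ and $\cG = \cO_X$, which sidesteps the issue entirely: every structural map \emph{into} $j_*\cE$ is rigidly determined by its restriction to $U$, so existence is a direct construction and uniqueness plus verification of the axioms are immediate.
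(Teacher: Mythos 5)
Your argument is correct and follows the same route as the paper: identify $\cIC(X,\cE)$ with $j_*\cE$ via Proposition~\ref{prop:S2-j*} and then use the canonical, and unique, algebra structure that $j_*\cE$ inherits from $\cE$. The paper's own proof is a one-sentence citation of exactly this fact; you have merely unpacked it by spelling out the adjunction $\Hom_{\cO_X}(\cG, j_*\cE)\simeq\Hom_{\cO_U}(j^*\cG,\cE)$ that underlies both the construction of the multiplication and its uniqueness.
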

\begin{proof}
  This is immediate from Proposition~\ref{prop:S2-j*} and the fact
  that the algebra structure on $\cE$ determines a unique algebra
  structure on $j_*\cE$.
\end{proof}

\begin{defn}
Let $U \subset X$ be an open subscheme whose complement has codimension at
least $2$.  A morphism of schemes $f: Y \to X$ is said to be \emph{$S_2$
relative to $U$} if for all $x \in X^\topl$ such that $\dc \bar x \ge \dc Z$, we have $H^k(i^!_xf_*\cO_Y) = 0$ if $k < 2$.  
\end{defn}

\begin{rmk}\label{rmk:s2-gen}
  Note that if $f$ is finite and $S_2$ relative to $U$, then the image
  under $f$ of any generic point of an irreducible component of $Y$
  must lie in $U$.  Indeed, if $y$ is such a generic point and $x=f(y)$, then $H^0(i^!_y
  \cO_Y) \ne 0$, which implies that $H^0(i^!_xf_*\cO_Y) \ne 0$ by the
  argument given in the proof of Proposition~\ref{prop:Spec-IC}.  In
  particular, $f^{-1}(U)$ cannot be empty; in fact, it is open dense.
\end{rmk}

\begin{rmk}\label{rmk:s2-id}
  If $f$ is finite, the definition of ``$S_2$ relative to $U$'' is
  equivalent to requiring that $f_*\cO_Y \simeq \cIC(X,
  f_*\cO_{f^{-1}(U)})$, and hence, according to
  Proposition~\ref{prop:Spec-IC}, to requiring that $\cIC(Y,
  \cO_{f^{-1}(U)}) \simeq \cO_Y$.  (Note that the proposition applies
  since $f^{-1}(U)$ is open dense by the previous remark.)

  In particular, by Lemma~\ref{lem:Ox-IC}, $\id: X \to X$ is $S_2$
  relative to $U$ if and only if $\cIC(X,\cO_U)$ is defined and $X$ is
  locally $S_2$ outside $U$.  Moreover, if $f:Y\to X$ is a finite
  morphism with $Y$ $S_2$ and $f^{-1}(U)$ dense, then $f$ is $S_2$
  relative to $U$.
\end{rmk}

\begin{thm}\label{thm:S2-ext}
  Let $\rho_1: \tU \to U$ be a finite morphism such that $\cIC(X, \rho_{1*}\cO_\tU)$ is defined, and let $\tX$ denote
  the scheme $\bSpec \cIC(X, \rho_{1*}\cO_\tU)$.  The natural morphism
  $\rho: \tX \to X$ is universal with respect to finite morphisms
  $f:Y\to X$ which are $S_2$ relative to $U$ and whose restriction
  $f|_{f^{-1}(U)}$ factors through $\rho_1$.  In other words, if $f: Y
  \to X$ is any finite morphism that is $S_2$ relative to $U$ and such
  that $f|_{f^{-1}(U)}$ factors through $\rho_1$, then $f$ factors
  through $\rho$ in a unique way.

  In addition, $\rho$ is a finite morphism, $\rho^{-1}(U) \simeq \tU$, and
  $\rho|_\tU = \rho_1$.  Moreover, $\tU$ is a dense open subscheme of $\tX$, and $\id: \tX \to \tX$ is $S_2$ relative to $\tU$.
\end{thm}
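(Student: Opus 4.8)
The plan is to verify the claimed properties in roughly the order they are stated, leaning on the characterizations already established. Write $\cF = \cIC(X, \rho_{1*}\cO_\tU)$, which is a coherent sheaf of $\cO_X$-algebras by the preceding proposition, and $\rho : \tX = \bSpec\cF \to X$ the canonical affine morphism. Since $\cF$ is coherent, $\rho$ is finite; this is the standard property of global Spec recalled in \S\ref{sect:notation}. Next, because $\cF|_U \simeq \rho_{1*}\cO_\tU$ (the intermediate extension restricts to the given sheaf on $U$), forming $\bSpec$ is compatible with restriction to the open set $U$, so $\rho^{-1}(U) = \bSpec(\rho_{1*}\cO_\tU)$ over $U$, which is canonically $\tU$, and under this identification $\rho|_\tU = \rho_1$. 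The density of $\tU$ in $\tX$ follows from Remark~\ref{rmk:s2-gen}: $\cIC(X,\rho_{1*}\cO_\tU)$ being defined means $\rho_{1*}\cO_\tU \in \cD(U)^{+,\ge 0}$, and tracing through the depth computation in the proof of Proposition~\ref{prop:Spec-IC} shows the generic points of $\tX$ map into $U$; alternatively, one invokes the remark directly once $\id:\tX\to\tX$ is known to be $S_2$ relative to $\tU$.

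For the statement ``$\id : \tX \to \tX$ is $S_2$ relative to $\tU$'': by Remark~\ref{rmk:s2-id}, this is equivalent to $\cIC(\tX, \cO_{\rho^{-1}(U)})$ being defined together with $\tX$ being locally $S_2$ outside $\tU$, and by Remark~\ref{rmk:s2-id} again (the version for finite morphisms, via Proposition~\ref{prop:Spec-IC}) this is equivalent to $\cF \simeq \cIC(X, \cF|_U)$. But $\cF = \cIC(X,\rho_{1*}\cO_\tU)$ and $\cF|_U \simeq \rho_{1*}\cO_\tU$, so $\cIC(X,\cF|_U) \simeq \cIC(X, \rho_{1*}\cO_\tU) = \cF$ is automatic from the fact that $\cIC(X,\cdot)$ is inverse to $j^*$ on the relevant categories (Proposition~\ref{prop:interm-ext} and the ensuing definition). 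So this part is essentially formal once the characterizations are assembled correctly.

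The substantive part is the universal property. Let $f : Y \to X$ be finite, $S_2$ relative to $U$, with $f|_{f^{-1}(U)}$ factoring through $\rho_1$, say via $g_1 : f^{-1}(U) \to \tU$ with $\rho_1 \circ g_1 = f|_{f^{-1}(U)}$. Since $f$ is affine, giving a morphism $Y \to \tX$ over $X$ is the same as giving a morphism of $\cO_X$-algebras $\cF \to f_*\cO_Y$. Now $f$ being $S_2$ relative to $U$ means, by Remark~\ref{rmk:s2-id}, that $f_*\cO_Y \simeq \cIC(X, f_*\cO_{f^{-1}(U)})$; and $g_1$ induces an $\cO_U$-algebra map $\rho_{1*}\cO_\tU \to (f|_{f^{-1}(U)})_*\cO_{f^{-1}(U)} = (f_*\cO_Y)|_U$. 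Applying the functor $\cIC(X,\cdot)$ to this map of coherent sheaves on $U$ — which is legitimate because both source and target lie in the domain category $\cM^{s,\pm}(U)$ (the target since $\cIC(X,f_*\cO_{f^{-1}(U)})$ is defined, the source by hypothesis) — yields a morphism $\cF = \cIC(X,\rho_{1*}\cO_\tU) \to \cIC(X, (f_*\cO_Y)|_U) \simeq f_*\cO_Y$ of coherent sheaves, and one checks it is a ring homomorphism using that $\cIC(X,\cdot) \simeq j_*$ on such sheaves (Proposition~\ref{prop:S2-j*}), so that it is determined by, and inherits multiplicativity from, its restriction to $U$. Uniqueness is the crux: any two $\cO_X$-algebra maps $\cF \to f_*\cO_Y$ agreeing after restriction to $U$ must coincide, because $f_*\cO_Y \simeq j_*((f_*\cO_Y)|_U)$ (the $S_2$-relative hypothesis again, via Proposition~\ref{prop:S2-j*}) and $\Hom_{\cO_X}(\cF, j_*\cG) = \Hom_{\cO_U}(\cF|_U, \cG)$ by adjunction; and the given $g_1$ is itself the unique lift of $f|_{f^{-1}(U)}$ through the finite map $\rho_1$ on the dense open $f^{-1}(U)$ only if $\rho_1$ is, say, a monomorphism on the relevant locus — here one should instead simply observe that the factorization data $g_1$ is part of the input, so uniqueness of the extended factorization reduces cleanly to the $\Hom$-adjunction computation above. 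The main obstacle I anticipate is bookkeeping: making sure every sheaf in sight genuinely lies in $\cM^{s,\pm}$ so that $\cIC^s(X,\cdot)$ may be applied and behaves as $j_*$, and checking that the constructed map of sheaves really is a morphism of \emph{algebras} rather than merely of $\cO_X$-modules — both of which are handled by systematically reducing to the open set $U$ via Proposition~\ref{prop:S2-j*}.
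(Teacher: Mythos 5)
Your proof is correct and follows essentially the same route as the paper's: establish the basic properties of $\rho$ by restricting $\bSpec$ to $U$ and applying Proposition~\ref{prop:Spec-IC} and Remark~\ref{rmk:s2-id}, get density from Remark~\ref{rmk:s2-gen}, and then obtain the universal factorization by pushing the algebra map $\rho_{1*}\cO_\tU \to (f_*\cO_Y)|_U$ through $\cIC(X,\cdot)$ and using its full faithfulness (equivalently, Proposition~\ref{prop:S2-j*} and adjunction) for uniqueness. Your explicit remark that the uniqueness is relative to the given factorization datum $g_1$ is a reasonable clarification, and is the same reading the paper adopts implicitly when it fixes $g_0$ before invoking uniqueness at the sheaf level.
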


Here is a diagram:
\[
\xymatrix@R-10pt{
\;f^{-1}(U)\; \ar@{^{(}->}[r]\ar[dr]^{g_1}\ar[ddr]_{f|_{f^{-1}(U)}}
& Y \ar@{.>}[dr]^{g}
\ar@/^.5pc/[ddr]|!{[d];[dr]}\hole_(.2){f} \\
& \;\tU\; \ar@{^{(}->}@<-.5ex>[r]_{\tj}\ar[d]^{\rho_1} & \tX\ar[d]^{\rho} \\
& \;U\; \ar@{^{(}->}[r]_{j} & X}
\]
As usual, the universal property enjoyed by $\tX$ and $\rho$ characterizes
them uniquely up to unique isomorphism.

Note that by Lemma~\ref{lem:F-IC}, the condition that $\cIC(X,
\rho_{1*}\cO_\tU)$ be defined is equivalent to requiring that $\depth
(\rho_{1*}\cO_\tU)_x \ge \min \{2, \dim (\rho_{1*}\cO_\tU)_x\}$ at all
points $x \in U^\topl$ such that $\dc \bar x \ge \dc Z$.  Moreover,
since $\cIC(X, \rho_{1*}\cO_\tU)\in \cM^{\pm}(X)$, this sheaf
satisfies the analogous depth conditions for all $x\in X^{\topl}$ with
$\dc \bar x \ge \dc Z$.  By Proposition~\ref{prop:S2-j*}, this is
equivalent to saying that $j \circ \rho_1: \tU \to X$ is $S_2$
relative to $U$.

\begin{defn}
The scheme $\tX$ constructed in Theorem~\ref{thm:S2-ext} is called the
\emph{$S_2$-extension} of $\rho_1: \tU \to U$.
\end{defn}

It should be noted that in general, an $S_2$-extension may be locally
$S_2$ only at the points in $X^\topl$.  It is in fact an $S_2$ scheme
when $G$ is trivial (so that $X^\topl = X$) or when
Lemma~\ref{lem:Ox-IC-orbit} can be invoked.  Moreover, if
$\cIC(X,\rho_{1*}\cO_\tU)$ is defined in the nonequivariant case,
i.e., the depth condition on $(\rho_{1*}\cO_\tU)_x$ described above
holds for $x\in U$ and not just $x\in U^\topl$, then both equivariant
and nonequivariant $S_2$-extensions of $\rho_1$ are defined.  Since
the nonequivariant universal mapping property is stronger than the
equivariant universal property, we obtain the following corollary.

\begin{cor} \label{cor:eqneq} If $\cIC(X,\rho_{1*}\cO_\tU)$ is defined in the
nonequivariant case, then the nonequivariant and equivariant
$S_2$-extensions of $\rho_1$ are canonically isomorphic.
\end{cor}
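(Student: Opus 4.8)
The plan is to show that the nonequivariant $S_2$-extension, equipped with its natural $G$-action, already satisfies the \emph{equivariant} universal property, so that by uniqueness of universal objects the two constructions must coincide. Write $\tX^{\mathrm{eq}}$ and $\tX^{\mathrm{neq}}$ for the equivariant and nonequivariant $S_2$-extensions of $\rho_1$. The first step is to observe that $\tX^{\mathrm{neq}} = \bSpec\, \cIC^{\mathrm{neq}}(X,\rho_{1*}\cO_{\tU})$ carries a canonical $G$-action: by Proposition~\ref{prop:S2-j*}, applied in the nonequivariant setting, $\cIC^{\mathrm{neq}}(X,\rho_{1*}\cO_{\tU}) \simeq j_*(\rho_{1*}\cO_{\tU})$, and since $j$ and $\rho_1$ are $G$-morphisms and $\rho_{1*}\cO_{\tU}$ is a $G$-equivariant sheaf of algebras, its pushforward $j_*(\rho_{1*}\cO_{\tU})$ is again $G$-equivariant. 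Hence $\tX^{\mathrm{neq}}$ is naturally a $G$-scheme, $\rho^{\mathrm{neq}}\colon \tX^{\mathrm{neq}} \to X$ is a $G$-morphism, and (using the comparison in the remarks following Proposition~\ref{prop:S2-j*} and the forgetful-functor compatibility noted in Section~\ref{sect:perv-sheaves}) the underlying $G$-equivariant sheaf $j_*(\rho_{1*}\cO_{\tU})$ coincides with the $G$-equivariant $\cIC^{\mathrm{eq}}(X,\rho_{1*}\cO_{\tU})$ whenever the latter is defined. In particular $\tX^{\mathrm{neq}}$ and $\tX^{\mathrm{eq}}$ have the same underlying scheme and $G$-action; but rather than invoke this identification directly, it is cleaner to run the universal-property argument.

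The second step is to check that $\tX^{\mathrm{neq}}$, viewed as a $G$-scheme, satisfies the equivariant universal property of Theorem~\ref{thm:S2-ext}. So let $f\colon Y \to X$ be a \emph{$G$-morphism} of $G$-schemes that is $S_2$ relative to $U$ and such that $f|_{f^{-1}(U)}$ factors through $\rho_1$ $G$-equivariantly. Forgetting the $G$-action, $f$ is in particular a nonequivariant finite morphism that is $S_2$ relative to $U$ with $f|_{f^{-1}(U)}$ factoring through $\rho_1$, so by the nonequivariant universal property $f$ factors as $\rho^{\mathrm{neq}} \circ g$ for a unique morphism of schemes $g\colon Y \to \tX^{\mathrm{neq}}$. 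It remains to see that $g$ is automatically $G$-equivariant. For each $a \in G(T)$ (working functorially over $S$-schemes $T$, or simply on $k$-points when $S = \Spec k$), both $g \circ a$ and $a \circ g$ are morphisms $Y \to \tX^{\mathrm{neq}}$ whose composite with $\rho^{\mathrm{neq}}$ is $f \circ a = a \circ f$ (using that $f$ is $G$-equivariant and $\rho^{\mathrm{neq}}$ is $G$-equivariant), and whose restriction over $U$ agrees, since there $g$ is determined by the $G$-equivariant factorization of $f|_{f^{-1}(U)}$ through $\rho_1$. By the uniqueness clause in the nonequivariant universal property, $g \circ a = a \circ g$, so $g$ is a $G$-morphism. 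This exhibits $\tX^{\mathrm{neq}}$ with its $G$-action as an object satisfying the equivariant universal property, so $\tX^{\mathrm{neq}} \simeq \tX^{\mathrm{eq}}$ canonically as $G$-schemes over $X$.

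The main technical point — and the step where one must be a little careful — is the argument that $g$ is $G$-equivariant, since a priori uniqueness is only given in the \emph{nonequivariant} category; the trick is precisely that this nonequivariant uniqueness is strong enough to force equivariance once one knows $f$ and $\rho^{\mathrm{neq}}$ are equivariant and that the two candidate maps $g\circ a$ and $a\circ g$ already agree over the dense open $U$. One should also record, for completeness, that the hypothesis ``$\cIC^{\mathrm{neq}}(X,\rho_{1*}\cO_{\tU})$ is defined'' guarantees via Lemma~\ref{lem:F-IC} (applied with no invariance restriction on points) that $\cIC^{\mathrm{eq}}(X,\rho_{1*}\cO_{\tU})$ is \emph{a fortiori} defined, since the relevant depth inequality is required at all points of $U$, hence in particular at all points of $U^{\topl}$ — so $\tX^{\mathrm{eq}}$ genuinely exists and the comparison makes sense.
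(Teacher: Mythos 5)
Your overall strategy is exactly the one the paper intends: the paper's entire justification is the sentence preceding the corollary (both extensions are defined, and the nonequivariant universal mapping property is stronger than the equivariant one), and your proposal fleshes this out by equipping $\bSpec j_*(\rho_{1*}\cO_{\tU})$ with its natural $G$-structure and checking that it satisfies the equivariant universal property. The reduction of definedness (depth condition at all points of $U$ implies it at points of $U^{\topl}$) is also exactly the paper's remark.

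The one genuine soft spot is your verification that the factoring map $g\colon Y \to \tX^{\mathrm{neq}}$ is $G$-equivariant. In the generality in which the corollary is stated ($G$ an affine group scheme, flat and Gorenstein over a Noetherian base $S$; $Y$ an arbitrary finite scheme over $X$, possibly nonreduced), equivariance is a statement about the morphism $G \times_S Y \to \tX^{\mathrm{neq}}$, and checking it ``for each $a \in G(T)$'' does not come for free: for a $T$-point the two candidate maps are morphisms out of $Y_T$, and the uniqueness clause of Theorem~\ref{thm:S2-ext} applies to morphisms to $X$, not to $X_T$, so one would first need compatibility of the $S_2$-extension (or at least of its uniqueness statement) with base change along $T \to S$, which is not established; restricting to $k$-points handles only the case of a dense set of rational points and reduced schemes. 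The repair is easy and stays inside the paper's toolkit: by Proposition~\ref{prop:S2-j*} the factorization $g$ is $\bSpec$ of the unique algebra map $j_*(\rho_{1*}\cO_{\tU}) \to f_*\cO_Y$ restricting over $U$ to the map induced by $g_0$; since $f$ is $S_2$ relative to $U$ this map is just $j_*$ applied to the morphism $\rho_{1*}\cO_{\tU} \to (f|_{f^{-1}(U)})_*\cO_{f^{-1}(U)}$ determined by the equivariant $g_0$, hence is a morphism of $G$-equivariant sheaves of algebras, and $g$ is automatically a $G$-morphism. With that substitution your argument is complete and agrees with the paper's.
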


Before proving the theorem, we consider a few examples in which
$S_2$-extension has an elementary description.

\begin{exam}
If the map $j \circ \rho_1: \tU \to X$ is already finite (for example, if
$\tU$ is a single point), then $\tX = \tU$.
\end{exam}

\begin{exam}\label{exam:normal}
Note that the complement of $\tU$ in $\tX$ must have codimension at least $2$,
since $Z$ has codimension at least $2$ in $X$ and $\rho$ is finite.  Recall that according to Serre's criterion, a
scheme is normal if and only if it is $S_2$ and regular in codimension
$1$.  Thus, if $\tU$ is normal, the fact that $\tX$ is locally $S_2$
outside $\tU$ implies that $\tX$ is also normal.

In particular, suppose that $U$ is a normal subscheme of the integral
scheme $X$ and that $\rho_1: \tU \to U$ is an isomorphism.  Then
$\rho: \tX \to X$ is simply the usual normalization of $X$.  In view
of Proposition~\ref{prop:S2-j*}, we see that the normalization of $X$
has a remarkably simple description as $\bSpec(j_*\cO_U)$.
\end{exam}

\begin{exam}\label{exam:norm-cl}
  As a slight generalization of the previous example, let us now
  suppose only that $\tU$ is normal and that $X$ is integral.  An
  elementary construction of $\tX$ is given as follows.  Given an
  affine open subscheme $V = \Spec A$ of $X$, let $K$ be the fraction
  field of $\rho_1^{-1}(V)$, and let $B$ be the integral closure of
  the image of the natural map $A \to K$ induced by $\rho_1$.  Let
  $\tilde V = \Spec B$.  The various $\tilde V$'s obtained in this way
  as $V$ ranges over affine open subschemes of $X$ can be glued
  together to form a scheme $\tX'$.  This scheme enjoys a universal
  property similar to that of the normalization of a scheme.  By
  comparing with the universal property of $\tX$, it is easy to verify
  that $\tX$ and $\tX'$ are in fact canonically isomorphic.  We thus
  obtain an alternative elementary description of $\cIC(X,
  \rho_{1*}\cO_{\tU})$; it is the sheaf $V \mapsto B$.
\end{exam}

\begin{proof}[Proof of Theorem~\ref{thm:S2-ext}]
  We begin by establishing various properties of $\rho$ and $\tX$.
  Since $\cIC(X,\rho_{1*}\cO_\tU)$ is coherent, $\rho$ is finite.  From the definition of $\bSpec$, we know that
  $\rho^{-1}(U) \simeq \bSpec \cIC(X,\rho_{1*}\cO_\tU)|_U \simeq
  \bSpec \rho_{1*}\cO_\tU$.  Now, $\rho_1$ is finite, and therefore
  affine, so $\bSpec \rho_{1*}\cO_\tU$ is canonically isomorphic to
  $\tU$. Identifying these two schemes, we also see that $\rho|_\tU =
  \rho_1$.  Moreover, Proposition~\ref{prop:Spec-IC} tells us that
  $\cIC(\tX, \cO_\tU) \simeq \cO_\tX$, and then by Lemma~\ref{lem:Ox-IC} and Remark~\ref{rmk:s2-id}, we see that $\id: \tX \to \tX$ is $S_2$ relative to $\tU$.  

  As we have previously observed, $\rho$ finite implies that $\rho_*$
  is exact and $t$-exact.  Thus, $\rho_*\cO_\tX \simeq \cIC(X,
  \rho_{1*}\cO_\tU)$, and $\rho$ is $S_2$ relative to $U$.  We have
  already seen that $\rho|_\tU$ factors through $\rho_1$; indeed, with
  the obvious identifications, it equals $\rho_1$.  Finally,
  Remark~\ref{rmk:s2-gen} tells us that all generic points of
  irreducible components of $\tX$ lie in $\tU$, and hence that $\tU$
  is dense in $\tX$.

  It remains to show that $\tX$ and $\rho$ are universal with respect
  to these properties.  Let $f: Y \to X$ be a finite morphism that is $S_2$
  relative to $U$, and assume that $f|_{f^{-1}(U)}$ factors through
  $\rho_1$.  Let $V = f^{-1}(U)$, and let $g_0: V \to \tU$ be the
  morphism such that $f|_V = \rho_1 \circ g_0$.  Then $g_0$ gives rise
  to a morphism of sheaves $\rho_{1*}\cO_\tU \to f_*\cO_V$ on $U$ and
  therefore to a morphism of perverse coherent sheaves
\[
\rho_*\cO_\tX \simeq \cIC(X,\rho_{1*}\cO_\tU) \to \cIC(X, f_*\cO_V) \simeq
f_*\cO_Y.
\]
Applying the global Spec functor to this morphism of sheaves $\rho_*\cO_\tX \to
f_*\cO_Y$, we obtain a morphism of schemes $g: Y \to \tX$; this is the
desired morphism such that $f = \rho \circ g$.  The uniqueness of $g$
follows from the fact that there is a unique morphism $\cIC(X,
\rho_{1*}\cO_\tU) \to \cIC(X, f_*\cO_V)$ whose restriction to $U$ is the
morphism $\rho_{1*}\cO_\tU \to f_*\cO_V$ induced by $g_0$.
\end{proof}

As usual, any object characterized by a universal property comes with
a uniqueness theorem, but for $S_2$-extension, there is an even
stronger uniqueness property.

\begin{prop}\label{prop:unique}
Let $\hat X$ be a scheme containing $\tU$ as a dense open set,  and let
$\hat\rho:\hat X\to X$  be a finite morphism extending
$\rho_1:\tU\to U$.   If $\hat X$ is locally $S_2$ outside of $\tU$, then
$\hat X$ is isomorphic to $\tX$.
\end{prop}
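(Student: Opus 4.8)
The plan is to show that the hypotheses on $\hat X$ force $\hat\rho_*\cO_{\hat X}$ to satisfy the characterization of $\cIC(X,\rho_{1*}\cO_\tU)$ given in Lemma~\ref{lem:icchar} (or equivalently Remark~\ref{rmk:s2-id}), and then apply the global Spec functor. First I would observe that since $\hat\rho$ is finite, $\hat\rho_*$ is exact and $t$-exact (as noted after the proof of Proposition~\ref{prop:Spec-IC}), so $\hat\rho_*\cO_{\hat X}$ is a genuine coherent sheaf of $\cO_X$-algebras. Its restriction to $U$ is $\rho_{1*}\cO_\tU$, because $\hat\rho^{-1}(U)\simeq\tU$ and $\hat\rho|_\tU=\rho_1$. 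Since $\tU$ is dense in $\hat X$ and $\hat\rho$ is finite, the complement of $\tU$ in $\hat X$ has codimension at least $2$; combined with the hypothesis that $\hat X$ is locally $S_2$ outside $\tU$, this says exactly that $\hat\rho$ is $S_2$ relative to $U$ in the sense of the definition preceding Remark~\ref{rmk:s2-gen} (using the translation between depth of $\cO_{\hat X}$ at a point and local cohomology of $\hat\rho_*\cO_{\hat X}$ from the proof of Proposition~\ref{prop:Spec-IC}). Hence by Remark~\ref{rmk:s2-id}, $\hat\rho_*\cO_{\hat X}\simeq\cIC(X,\hat\rho_*\cO_{\hat\rho^{-1}(U)})\simeq\cIC(X,\rho_{1*}\cO_\tU)$.

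Now applying the global Spec functor to this isomorphism of sheaves of algebras gives $\hat X=\bSpec\hat\rho_*\cO_{\hat X}\simeq\bSpec\cIC(X,\rho_{1*}\cO_\tU)=\tX$ over $X$, and one checks this isomorphism restricts to the identity on $\tU$ and is compatible with the maps to $X$, so it is the desired isomorphism $\hat X\simeq\tX$. Alternatively, one can phrase the conclusion as an application of the universal property in Theorem~\ref{thm:S2-ext}: $\hat\rho:\hat X\to X$ is a finite morphism that is $S_2$ relative to $U$ and whose restriction over $U$ is $\rho_1$ (hence trivially factors through $\rho_1$), so it factors uniquely through $\rho:\tX\to X$; symmetrically $\rho$ factors through $\hat\rho$, and the uniqueness clause forces the two factorizations to be mutually inverse isomorphisms.

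The one point requiring a little care — and the main (though minor) obstacle — is verifying that $\hat X$ really is $S_2$ relative to $U$, i.e., that "locally $S_2$ at every point outside $\tU$, together with $\codim$ at least $2$" yields the cohomological vanishing $H^k(i^!_x\hat\rho_*\cO_{\hat X})=0$ for $k<2$ at all $x\in X^\topl$ with $\dc\bar x\ge\dc Z$. This is exactly the kind of depth-to-local-cohomology translation carried out in the proofs of Lemma~\ref{lem:icchar} and Proposition~\ref{prop:Spec-IC}: the lowest degree in which $H^k(i^!_x\hat\rho_*\cO_{\hat X})$ is nonzero equals $\min_{y\in\hat\rho^{-1}(x)}\depth\cO_{y,\hat X}$, and for such $x$ each $y\in\hat\rho^{-1}(x)$ lies outside $\tU$ with $\dim\cO_{y,\hat X}\ge 2$, so local $S_2$-ness gives $\depth\cO_{y,\hat X}\ge 2$. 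Everything else is a formal consequence of the fully faithfulness of $\bSpec$ on coherent sheaves of algebras and of the results already established, so no new ideas are needed beyond assembling these pieces.
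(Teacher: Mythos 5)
Your proof is correct and takes essentially the same route as the paper: since $\hat\rho$ is finite, $\hat\rho_*$ is $t$-exact, one identifies $\hat\rho_*\cO_{\hat X}\simeq\cIC(X,\rho_{1*}\cO_\tU)$, and applies $\bSpec$. The paper obtains this identification by first applying Lemma~\ref{lem:Ox-IC} on $\hat X$ to get $\cO_{\hat X}\simeq\cIC(\hat X,\cO_\tU)$ and then pushing forward; you verify the depth condition on $X$ directly, which amounts to the same translation. One small slip worth correcting: where you assert that for every $x\in X^\topl$ with $\dc\bar x\ge\dc Z$, each $y\in\hat\rho^{-1}(x)$ lies outside $\tU$, this is only true for $x\in Z^\topl$; for $x\in U^\topl$ with $\dc\bar x\ge\dc Z$, the fiber lies inside $\tU$, where the hypothesis on $\hat X$ says nothing. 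However, for those $x$ the required vanishing $H^k(i^!_x\hat\rho_*\cO_{\hat X})=0$ for $k<2$ concerns only $\rho_{1*}\cO_\tU$ itself and is exactly the condition that $\cIC(X,\rho_{1*}\cO_\tU)$ be defined, which is implicit in the existence of $\tX$, so the argument stands.
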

\begin{proof}
  Since $\hat X$ is locally $S_2$ outside of $\tU$, its structure
  sheaf is an $\cIC$ sheaf: $\cO_{\hat X}\simeq\cIC(\hat
  X,\cO_{\tU})$.  The functor $\hat\rho_*$ is exact and $t$-exact, so
  $\hat\rho_*\cO_{\hat X}\simeq\cIC(X,\hat\rho_*\cO_{\tU})$.  But now
  $\hat X\simeq\bSpec \rho_*\cO_{\hat X}\simeq \tX$.
\end{proof}

\begin{rmk}
The developments of this section are closely related to the ideas in
Section~5.10 of EGA4, Part II~\cite{ega42}, which (translated into our
notation) deals with the $S_2$ condition for sheaves of the form
$j_*\cF$ and schemes of the form $\bSpec j_*\cO_U$.  The assumptions
in  loc. cit. are a bit different (e.g., the last part of our
Proposition~\ref{prop:S2-j*} must be imposed as a hypothesis), and the
specific setting of Theorem~\ref{thm:S2-ext} is not treated there.
Nevertheless, it is easy to imagine adapting the methods used there to
prove that $\rho: \tilde X \to X$ is $S_2$ relative to $U$.  However,
the universal property of $\rho$ and the uniqueness statement in
Proposition~\ref{prop:unique} are consequences of the fact that the
$\cIC$ functor is an equivalence of categories with its essential
image.  Proving those statements in the language
of~\cite[Section~5.10]{ega42} would likely amount to unwinding the
proof of Proposition~\ref{prop:interm-ext} and the construction of the
perverse coherent $t$-structure in~\cite{bez:pc}.  The conciseness and
clarity of the uniqueness arguments are perhaps the main benefit of
using perverse coherent sheaves here. 
\end{rmk}

\begin{rmk}
Our main goal in this paper is to apply the $S_2$-extension construction in
the setting of special pieces, where the hypotheses of
Example~\ref{exam:norm-cl} hold.  Indeed, $U$ will be normal, and $\rho_1:
\tU \to U$ will be a surjective \'etale morphism; see
Section~\ref{sect:tP}.  Since an elementary construction of
the $S_2$-extension is available in that setting (as was known to
C.~Procesi~\cite{procesi}), one could in principle forego developing the
machinery of the functor $\bSpec \cIC(X, \cdot)$. However, we will also
require the results of Section~\ref{sect:derived-IC}, and those seem to be
much easier to state and prove in the context of perverse coherent sheaves
than in a purely ring-theoretic setting.
\end{rmk}

We conclude this section with a remark on the ``Macaulayfication''
problem: given a scheme, find a Cohen--Macaulay scheme that is
birationally equivalent to it.  (For varieties over a field of
characteristic $0$, this problem is solved by Hironaka's Theorem.)
Kawasaki, extending early work of Faltings~\cite{faltings}, has shown
how to construct a Macaulayfication of any Noetherian scheme over a
ring (of arbitrary characteristic) with a dualizing
complex~\cite{kawasaki}, but the resulting scheme is not canonical.
(It does not have the obvious universal property.)  Indeed, just by
considering varieties that fail to be Cohen--Macaulay at a single
closed point, Brodmann has exhibited a family of examples which do not
have a universal Macaulayfication~\cite{brodmann}.  There may,
however, be a finite Macaulayfication that is universal among
appropriate finite morphisms from Cohen-Macaulay schemes.  The
following theorem addresses the problem in a way that is reminiscent
of Example~\ref{exam:normal}.

\begin{thm}[Macaulayfication]\label{thm:macaulay}
  Let $X$ be an scheme of finite type over a Noetherian base scheme
  $S$ admitting a dualizing complex, and suppose $\Coh(X)$ has enough
  locally free sheaves.  Let $U$ be an open Cohen--Macaulay subscheme
  whose complement has codimension at least $2$.  Then $X$ has a
  finite Macaulayfication if and only if $\cIC^c(X, \cO_U)$ is a sheaf
  (where $c$ is the Cohen--Macaulay perversity). In this case, the
  unique finite Macaulayfication is
  $\tX^{c}\overset{\mathrm{def}}{=}\bSpec \cIC^c(X,\cO_U)$ and
  coincides with the $S_2$-extension of $\mathrm{id}:U\to U$.  The scheme $\tX^{c}$ is
  universal with respect to finite morphisms to $X$ which are $S_2$
  relative to $U$.  In particular, any finite morphism $f:Y\to X$ with
  $Y$ Cohen-Macaulay and $f^{-1}(U)$ dense factors uniquely though
  $\tX^{c}$.
\end{thm}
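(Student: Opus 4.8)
The plan is to reduce the whole statement to the $S_2$-extension construction of Theorem~\ref{thm:S2-ext}, by way of Propositions~\ref{prop:Spec-IC} and~\ref{prop:S2-j*} and Lemma~\ref{lem:Ox-IC}. Since $U$ is Cohen--Macaulay, Lemma~\ref{lem:Ox-IC} already tells us that $\cIC^c(X,\cO_U)$ is defined as an object of $\cM^{c,\pm}(X)$, and likewise that $\cIC^s(X,\cO_U)\simeq j_*\cO_U$ is defined (a Cohen--Macaulay scheme being in particular $S_2$); the only real question is whether $\cIC^c(X,\cO_U)$ is concentrated in degree $0$.

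\emph{The ``only if'' direction, and uniqueness.} Let $\mu\colon W\to X$ be a finite Macaulayfication---a finite morphism with $W$ Cohen--Macaulay that restricts to an isomorphism over $U$ (so $\mu^{-1}(U)$ is dense and its complement in $W$ has codimension $\ge 2$, $\mu$ being finite). Put $\cF=\mu_*\cO_W$, so that $W=\bSpec\cF$ and $\cF|_U\simeq\cO_U$. Because $W$ is Cohen--Macaulay, Lemma~\ref{lem:Ox-IC} applied on $W$ gives $\cIC^c(W,\cO_{\mu^{-1}(U)})\simeq\cO_W$; plugging this into Proposition~\ref{prop:Spec-IC} with the perversity $c$ yields $\cIC^c(X,\cO_U)\simeq\cF=\mu_*\cO_W$, which is an honest coherent sheaf, and furthermore $\tX^c=\bSpec\cIC^c(X,\cO_U)\simeq\bSpec\mu_*\cO_W=W$ over $X$. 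Hence a finite Macaulayfication, if one exists, forces $\cIC^c(X,\cO_U)$ to be a sheaf and must be canonically $\tX^c$.

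\emph{The ``if'' direction.} Assume $\cIC^c(X,\cO_U)$ is a sheaf. The key step is to identify it with $j_*\cO_U$. Since $s\le c$ and the operations $p\mapsto p^\pm$ respect the partial order on perversities (Lemma~\ref{lem:s2-cm-extreme} and the order-preservation noted just before it), we get ${}^{c^+}\cD(X)^{\ge 0}\subseteq{}^{s^+}\cD(X)^{\ge 0}$, while any sheaf lies in $\std\cD(X)^{\le 0}={}^{s^-}\cD(X)^{\le 0}$; so a sheaf belonging to $\cM^{c,\pm}(X)$ automatically belongs to $\cM^{s,\pm}(X)$. As $j^*\colon\cM^{s,\pm}(X)\to\cM^{s,\pm}(U)$ is an equivalence (Proposition~\ref{prop:interm-ext}) and $j^*\cIC^c(X,\cO_U)\simeq\cO_U$, it follows that $\cIC^c(X,\cO_U)\simeq\cIC^s(X,\cO_U)\simeq j_*\cO_U$, compatibly with algebra structures (Proposition~\ref{prop:S2-j*}). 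Therefore $\tX^c=\bSpec j_*\cO_U$ is exactly the $S_2$-extension $\tX$ of $\id\colon U\to U$, and Theorem~\ref{thm:S2-ext} supplies the rest: $\rho\colon\tX^c\to X$ is finite, an isomorphism over $U$, and universal among finite morphisms to $X$ that are $S_2$ relative to $U$ (the hypothesis ``$f|_{f^{-1}(U)}$ factors through $\id_U$'' being automatic). That $\tX^c$ is Cohen--Macaulay follows again from Proposition~\ref{prop:Spec-IC} with perversity $c$: taking $\cF=\cIC^c(X,\cO_U)$ there, the condition ``$\cIC^c(X,\cF|_U)\simeq\cF$'' is tautologically satisfied, so $\cIC^c(\tX^c,\cO_{\rho^{-1}(U)})\simeq\cO_{\tX^c}$, and Lemma~\ref{lem:Ox-IC} on $\tX^c$ then shows $\tX^c$ is locally Cohen--Macaulay outside $\rho^{-1}(U)$, hence---being Cohen--Macaulay on $\rho^{-1}(U)\simeq U$---Cohen--Macaulay everywhere. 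Finally, any finite $f\colon Y\to X$ with $Y$ Cohen--Macaulay and $f^{-1}(U)$ dense is $S_2$ relative to $U$ by Remark~\ref{rmk:s2-id} (Cohen--Macaulay implies $S_2$), and so factors uniquely through $\rho$.

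The main obstacle is the perversity-switching step $\cIC^c(X,\cO_U)\simeq j_*\cO_U$ in the ``if'' direction: this is what connects the Cohen--Macaulay perversity to the $S_2$-extension machinery, and it requires attention to the directions of the subcategory inclusions induced by $s\le c$. A further technical point is the usual equivariant caveat: Lemma~\ref{lem:Ox-IC} only establishes ``locally Cohen--Macaulay'' at the points of $X^\topl$, so to conclude Cohen--Macaulayness of $\tX^c$ at every point in the equivariant setting one invokes a hypothesis like that of Lemma~\ref{lem:Ox-IC-orbit} (in the nonequivariant case, $X^\topl=X$ and nothing extra is needed). Everything else is routine bookkeeping with the functors of Sections~\ref{sect:notation}--\ref{sect:S2-ext}.
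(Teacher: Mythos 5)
Your proof is correct and, at bottom, follows the same route as the paper's: it runs on Lemma~\ref{lem:Ox-IC}, Proposition~\ref{prop:Spec-IC}, the observation that $c\ge s$ forces a sheaf-valued $\cIC^c(X,\cO_U)$ to agree with $\cIC^s(X,\cO_U)\simeq j_*\cO_U$, and then Theorem~\ref{thm:S2-ext}. The only substantive difference is that you spell out the perversity-comparison step -- showing that a sheaf in $\cM^{c,\pm}(X)$ automatically lies in $\cM^{s,\pm}(X)$ via ${}^{c^+}\cD(X)^{\ge 0}\subseteq{}^{s^+}\cD(X)^{\ge 0}$ and the fact that any sheaf lies in ${}^{s^-}\cD(X)^{\le 0}$, then appealing to the equivalence $j^*\colon\cM^{s,\pm}(X)\to\cM^{s,\pm}(U)$ -- where the paper simply asserts it. You also explicitly separate the ``only if''/uniqueness direction from the ``if'' direction, whereas the paper treats a general finite $f\colon Y\to X$ first and then specializes to $f$ being an isomorphism over $U$; this is a reorganization rather than a different method. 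Your closing caveat about $X^\topl$ versus $X$ in the equivariant setting is well placed and matches the paper's own remark that the theorem is stated nonequivariantly ``for convenience.''
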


This theorem is stated without a group action for convenience.  An
equivariant version akin to Theorem~\ref{thm:S2-ext} can be proved by
a similar argument.

\begin{proof}
  Let $f: Y \to X$ be a finite morphism such that $f^{-1}(U)$ is
  Cohen--Macaulay and dense in $Y$.  The last condition allows us to
  apply Lemma~\ref{lem:Ox-IC}: $Y$ is Cohen--Macaulay if and only if
  $\cIC^c(Y,\cO_{f^{-1}(U)}) \simeq \cO_Y$.  Next, by
  Proposition~\ref{prop:Spec-IC}, we see that $Y$ is Cohen--Macaulay
  if and only if we have $\cIC^c(X, f_*\cO_{f^{-1}(U)}) \simeq
  f_*\cO_Y$.  In particular, if $f$ is an isomorphism over $U$, then
  $Y$ is a Macaulayfication of $X$ if and only if $\cIC^c(X, \cO_U)$
  is a sheaf. However, since $c\ge s$, we know that whenever $\cIC^c(X, \cO_U)$
  is a sheaf, it must coincide with $\cIC^s(X, \cO_U)$.  We now see
  that $\tX^{c}$ is just the ``$S_2$-ification'' of $X$, and the universal
  property follows from Theorem~\ref{thm:S2-ext}.  Finally, a
  finite morphism $f:Y\to X$ with $Y$ Cohen--Macaulay and $f^{-1}(U)$
  dense is $S_2$ relative to $U$, so the universal property applies in
  this situation.
\end{proof}

Note that this construction does not coincide with Kawasaki's
Macaulayfication; the latter involves blow-ups and accordingly is
never finite.  Instead, this theorem generalizes a
result of Schenzel~\cite{schenzel} relating finite Macaulayfications
and ``$S_2$-ifications'' for a certain class of local rings.
(Schenzel's result is essentially Theorem~\ref{thm:macaulay} in the
special case $X = \Spec A$, where $A$ is a local domain that is a
quotient of a Gorenstein ring.)  However, the above scheme-theoretic
statement appears not to have been previously known.

Finally, we remark that Theorem~\ref{thm:S2-ext} can be generalized to
define a Cohen--Macaulay extension (or a $p$-extension for any
perversity $p$ with $s\le p\le c$) of appropriate $\rho_1:\tU\to U$.
Let $\rho_1$ be a finite morphism such that $\cIC^p(X,
\rho_{1*}\cO_\tU)$ is defined and a sheaf, and let $\tX^p$ denote the
scheme $\bSpec \cIC^p(X, \rho_{1*}\cO_\tU)$.  A similar argument to
that given in the proof of Theorem~\ref{thm:S2-ext} shows that the
natural morphism $\rho: \tX^p \to X$ is universal with respect to
finite morphisms $f:Y\to X$ which are ``$p$ relative to $U$'' (defined
in the obvious way) and whose restriction $f|_{f^{-1}(U)}$ factors
through $\rho_1$.  However, since $\cIC^p(X, \rho_{1*}\cO_\tU)$ and
$\cIC^s(X, \rho_{1*}\cO_\tU)$ coincide when the former is a sheaf, we
see that $\tX^p$ is just the $S_2$-extension, and so $\rho$ is in fact
universal with respect to finite morphisms $S_2$ relative to $U$ for
which $U$ has dense preimage.

\section{Middle Extension in the Derived Category}
\label{sect:derived-IC}

Recall that the intermediate extension functor $\cIC(X,\cdot): \cM^\pm(U) \to \cM^\pm(X)$ is an equivalence of categories.  (We make no assumptions about the perversity in this section.)  In particular, we have
\[
\Hom_{\cD(X)}(\cIC(X,\cE), \cIC(X,\cF)) \simeq \Hom_{\cM(U)}(\cE,\cF),
\]
for $\cE,\cF$ in $\cM^\pm(U)$.
The goal of this section is introduce a derived version of this
functor.  We will construct a functor of triangulated categories from a
suitable subcategory of $\cD(U)$ to $\cD(X)$ that ``extends''
$\cIC(X,\cdot)$, and then prove a generalization of
Proposition~\ref{prop:interm-ext} in this setting.

For most of this section, we make the
following assumption:
\begin{enumerate}
\item[(Q)] There is a class of projective objects $\cQ$ in $\cM^\pm(U)$ such
that\label{assum:proj}
{\makeatletter\@enumdepth=2\makeatother
\begin{enumerate}
\item every object of $\cM^\pm(U)$ is a quotient of some object in $\cQ$, and
\item for every object $\cA$ in $\cQ$, $\cIC(X,\cA)$ is a projective
sheaf on $X$.
\end{enumerate}}
\end{enumerate}

For example, condition~(Q) holds if $X$ is a quasiaffine scheme that is
locally $S_2$ outside $U$, and $p$ is the $S_2$ perversity.  In that case,
$\cIC(X,\cO_U) = \cO_X$.  Every object in $\cM^\pm(U)$ is in fact a
sheaf.  Moreover, since $U$ is quasiaffine, every
coherent sheaf on $U$ is a quotient of a free sheaf, so we can take $\cQ$
to be the class of free sheaves on $U$.

Let $\cM^\pm_0(U)$ be the abelian category $\cM^\pm(U) \cap
\Coh(U)$.  (In some cases, such as
with the $S_2$ perversity, it happens that $\cM^\pm_0(U) =
\cM^\pm(U)$.)  Let $\cM^\pm_0(X)$ be the subcategory of $\cM^\pm(X)$
consisting of objects $\cF$ such that $j^*\cF \in \cM^\pm_0(U)$.
Clearly, $\cIC(X,\cdot)$ and $j^*$ restrict to give equivalences of
categories between $\cM^\pm_0(U)$ and $\cM^\pm_0(X)$.

Now, $\cM^\pm_0(U)$ is a full subcategory of $\Coh(U)$ with enough
projective objects that are also projective in $\Coh(U)$
(namely, the objects of $\cQ$).  It follows that the bounded derived
category $\cD\cM^\pm_0(U)$ can be identified with a full triangulated
subcategory of $\cD(U)$.  For brevity, we henceforth write
$\cD^\pm_0(U)$ for $\cD\cM^\pm_0(U)$.

Let $\cD^\pm_0(X)$ be the full subcategory of $\cD(X)$ consisting of those
objects $\cA$ for which $\pH^n(\cA) \in \cM^\pm_0(X)$ for all $n$.  This is
a triangulated subcategory of $\cD(X)$ (because the subcategory
$\cM^\pm_0(X)$ of $\cM(X)$ is stable under extensions).  The perverse
$t$-structure on $\cD^\pm_0(X)$ (that is, the $t$-structure induced by the
perverse $t$-structure on $\cD(X)$) has heart $\cM^\pm_0(X)$.

Following~\cite[\S 3.1]{bbd}, given any $t$-structure on a full
triangulated subcategory of the derived category of an abelian category,
there is a \emph{realization functor} from the derived category of the
heart of the $t$-structure to the original derived category.  In our
situation, we obtain a functor $\real: \cD\cM^\pm_0(X) \to \cD(X)$, where
$\cD\cM^\pm_0(X)$ is the bounded derived category of the abelian category
$\cM^\pm_0(X)$.  We now briefly review its construction. This requires the
machinery of filtered derived categories; we refer the reader to~\cite[\S
3.1]{bbd} for complete definitions and details.  Let $\cD\cF(X)$ be the
bounded filtered derived category of coherent sheaves on $X$, and let
$\cD\cF_\bete(X)$ be the full subcategory of $\cD\cF(X)$ consisting of
objects whose filtration is stupid (``b\^ete'') with respect to the
perverse $t$-structure on $\cD^\pm_0(X)$. Forgetting the filtration gives
us a functor $\omega: \cD\cF_\bete(X) \to \cD(X)$; on the other hand,
by~\cite[Proposition~3.1.8]{bbd}, there is an equivalence of categories
$G: \cD\cF_\bete(X) \to \cC\cM^\pm_0(X)$, where $\cC\cM^\pm_0(X)$ is the
category of complexes of objects of $\cM^\pm_0(X)$.  We first define
$\overline{\real}: \cC\cM^\pm_0(X) \to \cD(X)$ by $\overline{\real} = \omega
\circ G^{-1}$. By~\cite[Proposition~3.1.10]{bbd}, $\overline{\real}$
factors through $\cD\cM^\pm_0(X)$ and thus gives rise to a functor $\real:
\cD\cM^\pm_0(X) \to \cD(X)$.  This functor is compatible with cohomology in the
following sense: for all $n$, there is an isomorphism of functors $H^n
\simeq \pH^n \circ \real: \cD\cM^\pm_0(X) \to \cM^\pm_0(X)$.
 
We define $\bIC(X,\cdot): \cD^\pm_0(U) \to \cD(X)$ by $\bIC(X,\cdot) = \real
\circ \cIC(X,\cdot)$.

\begin{lem}
The functor $\bIC(X,\cdot): \cD^\pm_0(U) \to \cD(X)$ takes values in
$\cD^\pm_0(X)$, and there are isomorphisms of functors $\pH^n(
\bIC(X,\cdot)) \simeq  \cIC(X,\pH^n(\cdot))$ for all $n$.
\end{lem}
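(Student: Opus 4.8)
The plan is to exploit the compatibility between the realization functor and cohomology that was noted just before the statement: for all $n$, there is an isomorphism of functors $H^n \simeq \pH^n \circ \real$ on $\cD\cM^\pm_0(X)$. Since $\bIC(X,\cdot) = \real \circ \cIC(X,\cdot)$, applying $\pH^n$ gives $\pH^n(\bIC(X,\cdot)) \simeq H^n(\cIC(X,\cdot))$ as functors $\cD^\pm_0(U) \to \cM^\pm_0(X)$, where $H^n$ denotes cohomology with respect to the perverse $t$-structure on $\cD\cM^\pm_0(X)$ (the derived category of the heart). But $H^n$ computed in $\cD\cM^\pm_0(X)$ of an object is, almost by definition of the derived category of an abelian category, just the $n$-th cohomology of the underlying complex; applying the equivalence $\cIC(X,\cdot): \cD^\pm_0(U) \to \cD\cM^\pm_0(X)$ induced by the exact equivalence on hearts, this matches $\cIC(X, \pH^n(\cdot))$. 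So the main point is to verify carefully that the equivalence $\cIC(X,\cdot): \cM^\pm_0(U) \to \cM^\pm_0(X)$ extends to an equivalence of bounded derived categories $\cD^\pm_0(U) \to \cD\cM^\pm_0(X)$ that commutes with $\pH^n$, and that this is the functor appearing in the definition of $\bIC$.

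The key steps, in order: First I would recall that $\cIC(X,\cdot)$ is an exact equivalence of abelian categories $\cM^\pm_0(U) \xrightarrow{\sim} \cM^\pm_0(X)$ (this follows from Proposition~\ref{prop:interm-ext} and the discussion identifying $\cM^\pm_0(U)$ with $\cM^\pm_0(X)$ via $\cIC$ and $j^*$), and note that any exact equivalence of abelian categories induces an equivalence of their bounded derived categories commuting with the standard cohomology functors. Thus $\cIC(X,\cdot): \cD^\pm_0(U) = \cD\cM^\pm_0(U) \xrightarrow{\sim} \cD\cM^\pm_0(X)$, and $H^n$ of $\cIC(X,\cA)$ in $\cD\cM^\pm_0(X)$ equals $\cIC(X, H^n(\cA))$, where $H^n(\cA)$ here means the cohomology of $\cA$ regarded as an object of $\cD\cM^\pm_0(U)$—which, under the identification of $\cD^\pm_0(U)$ with a full subcategory of $\cD(U)$, is the perverse cohomology $\pH^n(\cA)$. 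Second, I would invoke the stated compatibility $H^n \simeq \pH^n \circ \real$ to transport this: $\pH^n(\bIC(X,\cA)) = \pH^n(\real(\cIC(X,\cA))) \simeq H^n(\cIC(X,\cA)) \simeq \cIC(X, \pH^n(\cA))$, all isomorphisms being natural in $\cA$. Since $\cIC(X, \pH^n(\cA)) \in \cM^\pm_0(X)$ by construction, this simultaneously shows that $\pH^n(\bIC(X,\cA)) \in \cM^\pm_0(X)$ for all $n$, which is exactly the assertion that $\bIC(X,\cdot)$ takes values in $\cD^\pm_0(X)$.

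The main obstacle I anticipate is purely bookkeeping: making sure that "$H^n$" is consistently interpreted on the three relevant categories ($\cD\cM^\pm_0(U)$, $\cD\cM^\pm_0(X)$, and $\cD(X)$) and that the identification of $\cD\cM^\pm_0(U)$ with the full triangulated subcategory $\cD^\pm_0(U) \subset \cD(U)$ is compatible with taking perverse cohomology. The excerpt already asserts this identification and that $\cM^\pm_0(U)$ has enough projectives that remain projective in $\Coh(U)$, which is what makes the bounded derived category of the heart embed correctly; granting that, there is no analytic difficulty, only the need to chase the naturality of the isomorphisms through the definitions of $\real$, $G$, and $\omega$. I would present the argument as a short two-line computation preceded by the remark that an exact equivalence of abelian categories induces a $t$-exact equivalence of derived categories, and then cite the $H^n \simeq \pH^n \circ \real$ compatibility to conclude.
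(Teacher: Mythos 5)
Your proposal is correct and follows essentially the same route as the paper: both proofs use exactness of $\cIC(X,\cdot)$ on the heart to deduce that the induced derived functor commutes with cohomology, then invoke the compatibility $H^n \simeq {}^p\!H^n \circ \operatorname{real}$ and chain the isomorphisms together, with the containment in $\cD^\pm_0(X)$ falling out at the end since $\cIC(X,\cdot)$ lands in $\cM^\pm_0(X)$. Your version simply spells out more of the bookkeeping that the paper treats as immediate.
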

\begin{proof}
Since $\cIC(X,\cdot): \cM^\pm_0(U) \to \cM(X)$ is exact, the functor on derived
categories $\cIC(X,\cdot): \cD^\pm_0(U) \to \cD\cM(X)$ respects cohomology: $\cIC(X,\pH^n(\cdot))$ (or, equivalently, $\cIC(X,\std H^n(\cdot))$) is isomorphic to $H^n(\cIC(X,\cdot))$.  Next, $H^n \simeq \pH^n \circ \real$, so
\[
\cIC(X,\pH^n(\cdot)) \simeq H^n(\cIC(X,\cdot)) \simeq \pH^n(\real(\cIC(X,\cdot))) \simeq \pH^n(\bIC(X,\cdot)).
\]
Since $\cIC(X,\cdot)$ takes values in $\cM^\pm_0(X)$, it is obvious that
$\bIC(X,\cdot)$ takes values in $\cD^\pm_0(X)$.
\end{proof}

\begin{lem}\label{lem:real-left-inv}
There is an isomorphism of functors $j^*\bIC(X,\cdot) \simeq \id:
\cD^\pm_0(U) \to \cD^\pm_0(U)$.
\end{lem}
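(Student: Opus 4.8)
The plan is to show that the composite $j^*\bIC(X,\cdot) = j^*\circ\real\circ\cIC(X,\cdot)$ is isomorphic to the identity on $\cD^\pm_0(U)$ by reducing the statement to the analogous (and already proven) fact at the level of abelian categories, namely that $j^*\cIC(X,\cdot)\simeq\id$ on $\cM^\pm_0(U)$ (this is Proposition~\ref{prop:interm-ext}, since $\cIC(X,\cdot)$ is defined as the inverse equivalence to $j^*$). The key point to exploit is that the realization functor is built from a filtered derived category construction that is natural in the abelian category, so it commutes with the exact functor $j^*$ in an appropriate sense.

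First I would observe that $j^*$ is $t$-exact for the perverse $t$-structures: by the description of the perverse $t$-structure in terms of stalks $i^*_x$ and costalks $i^!_x$, restricting to the open set $U$ simply forgets the conditions at points of $Z^\topl$, so $j^*$ carries $\p\cD(X)^{\le 0}$ into $\p\cD(U)^{\le 0}$ and likewise for $\ge 0$; in particular $j^*$ sends $\cM^\pm_0(X)$ to $\cM^\pm_0(U)$ and restricts to an exact functor between these hearts, with $\pH^n\circ j^* \simeq j^*\circ\pH^n$. Consequently $j^*$ induces a triangulated functor $\cD\cM^\pm_0(X)\to\cD\cM^\pm_0(U)$ on the bounded derived categories of the hearts.

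Next I would invoke the naturality of the realization functor with respect to $t$-exact functors. The construction recalled in the excerpt—passing through the filtered derived category $\cD\cF_\bete$, the equivalence $G$ of \cite[Proposition~3.1.8]{bbd}, and the forgetful functor $\omega$—is functorial in the data of a triangulated category with $t$-structure; a $t$-exact functor $F$ between two such (here $j^*:\cD^\pm_0(X)\to\cD^\pm_0(U)$) lifts to a morphism of filtered derived categories and hence yields a canonical isomorphism $\real_U\circ (\text{induced }F) \simeq F\circ\real_X$. (This compatibility is standard for the \cite{bbd} realization functor; if one wants to avoid quoting it, one checks directly that $j^*$ commutes with $\omega$, $G$, and the béte filtration, since $j^*$ is exact and commutes with all the usual sheaf functors.) Applying this with $F = j^*$ gives
\[
j^*\circ\real_X \simeq \real_U\circ j^*\colon \cD\cM^\pm_0(X)\to\cD(U),
\]
where on the right $j^*$ denotes the induced functor $\cD\cM^\pm_0(X)\to\cD\cM^\pm_0(U)$.

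Finally I would compose: since $j^*\circ\cIC(X,\cdot)\simeq\id$ on $\cM^\pm_0(U)$ and both are exact, the induced functors on derived categories satisfy $j^*\circ\cIC(X,\cdot)\simeq\id$ on $\cD\cM^\pm_0(U) = \cD^\pm_0(U)$; and $\real_U$ restricted to $\cD^\pm_0(U)$ is the identity (the realization functor of the standard derived category embedding is the identity, since here $\cD^\pm_0(U)$ already is the bounded derived category of $\cM^\pm_0(U)$ sitting inside $\cD(U)$). Chaining these,
\[
j^*\bIC(X,\cdot) = j^*\real_X\cIC(X,\cdot) \simeq \real_U\, j^*\,\cIC(X,\cdot) \simeq \real_U\circ\id \simeq \id.
\]
The main obstacle is the middle step—pinning down the compatibility of $\real$ with $j^*$—since it requires either citing the functoriality of the \cite{bbd} realization functor in the $t$-exact functor or else unwinding the filtered derived category construction to check that $j^*$ commutes with $\omega$ and $G$ and preserves béte filtrations; the remaining steps are formal consequences of Proposition~\ref{prop:interm-ext} and the $t$-exactness of $j^*$.
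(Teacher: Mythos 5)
Your proposal is correct and takes essentially the same approach as the paper: the paper proves the compatibility $j^*\circ\real_X\simeq\real_U\circ j^*$ precisely by your "option (b)"—unwinding the filtered derived category construction to check that $j^*$ preserves the b\^ete filtration and commutes with $\omega$ and $G$—and then composes with $\cIC(X,\cdot)$ exactly as you do, using that on $\cD^\pm_0(U)$ the realization functor is the identity.
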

\begin{proof}
Since $\cIC(X,\cdot): \cM^\pm_0(U) \to \cM^\pm_0(X)$ and $j^*: \cM_!*(U) \to
\cM^\pm_0(U)$ are inverse equivalences of categories, they
give rise to equivalences of the corresponding categories of complexes, as
well as of the the corresponding derived categories.  These equivalences
are such that the square in the center of the diagram below commutes.
\[
\xymatrix{
\cD\cF_\bete(X) \ar[d]_{j^*}^{\wr} \ar[r]_{G}^{\sim}
\ar@/^1.5pc/[rrr]^{\omega} &
\cC\cM^\pm_0(X) \ar@<-1ex>[d]_{j^*}^{\wr} \ar[r] &
\cD\cM^\pm_0(X) \ar@<-1ex>[d]_{j^*}^{\wr} \ar[r]^{\real} &
\cD(X) \ar@/^2.3pc/[dl]^{j^*} \\
\cD\cF_\bete(U) \ar[r]^{G}_{\sim} \ar@/_1pc/[rr]_{\omega}&
\cC\cM^\pm_0(U) \ar@<-1ex>[u]_{\cIC(X,\cdot)} \ar[r] &
\cD^\pm_0(U) \ar@<-1ex>[u]_{j^{*}} \ar@<-0.5ex>@/^1pc/[ur]_{\bIC(X,\cdot)}}
\]

In the setting of filtered derived categories, the restriction functor
$j^*: \cD\cF(X) \to \cD\cF(U)$ respects stupidity of the filtration
(because $j^*$ takes $\cM^\pm_0(X)$ to $\cM^\pm_0(U)$) and so gives
rise to a functor $j^*: \cD\cF_\bete(X) \to \cD\cF_\bete(U)$ that
makes the leftmost square in the diagram above commute.  Here
$\cD\cF_\bete(U)$ is defined with respect to the perverse
$t$-structure on $D^\pm_0(U)$ (which is simply a shift of the standard
$t$-structure).  It is clear that restriction commutes with forgetting
the filtration, so $j^* \circ \omega \simeq \omega \circ j^*$.
Together, these statements imply that $j^* \circ \real:
\cD\cM^\pm_0(X) \to \cD^\pm_0(U)$ is isomorphic to $j^*:
\cD\cM^\pm_0(X) \to \cD^\pm_0(U)$.  Composing with $\cIC(X,\cdot):
\cD^\pm_0(U) \to \cD\cM^\pm_0(X)$, we find that $j^* \circ \real
\circ {}\cIC(X,\cdot) \simeq j^* \circ \cIC(X,\cdot)$, or in other words, $j^* \circ \bIC(X,\cdot) \simeq \id$.
\end{proof}

\begin{defn}
An object $\cF$ of $\cM^\pm_0(U)$ is said to be \emph{short} if $\cIC(X,\cF)
\in \std \cD(X)^{\le 1}$.
\end{defn}

For example, if $p$ is the $S_2$ perversity, all objects in $\cM(U)$ are
short.  Indeed, they, as well as all other short objects we will actually
encounter, satisfy the stronger condition that their images under $\cIC(X,\cdot)
$ belong to $\std \cD(X)^{\le 0}$, but the weaker condition above
suffices for the statements we wish to prove.

\begin{prop}\label{prop:short-isom}
If $\cF \in \cM^\pm_0(U)$ is short, there are natural isomorphisms
\begin{align}
\Hom_{\cD(X)}(\cIC(X,\cE), \cIC(X,\cF)[n]) &\simeq \Hom_{\cD(U)}(\cE, \cF[n])
\label{eqn:hom-m} \\
\cRHom_{\cD(X)}(\cIC(X,\cE), \cIC(X,\cF)) &\simeq Rj_*\cRHom_{\cD(U)}(\cE,
\cF) \label{eqn:crhom-m}
\end{align}
for all $n \in \Z$ and all $\cE \in \cM^\pm_0(U)$.
\end{prop}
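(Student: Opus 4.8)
The plan is to prove~\eqref{eqn:crhom-m} first and then obtain~\eqref{eqn:hom-m} as a consequence by applying $R\Gamma$ and using that $j^*\bIC(X,\cdot)\simeq\id$ (Lemma~\ref{lem:real-left-inv}) together with the adjunction $(j^*,Rj_*)$. To set up~\eqref{eqn:crhom-m}, first note that since $\cIC(X,\cdot)$ is fully faithful on $\cM^\pm_0(U)$ and $j^*\cIC(X,\cdot)\simeq\id$, we get a canonical adjunction-type morphism $\cRHom_{\cD(X)}(\cIC(X,\cE),\cIC(X,\cF))\to Rj_*\cRHom_{\cD(U)}(\cE,\cF)$, obtained by restricting to $U$ and using the counit $\cF'\to Rj_*j^*\cF'$. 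The task is to show this is an isomorphism, i.e.\ that the ``extra'' contributions supported on $Z$ vanish. Equivalently, applying the octahedron/triangle $i_*i^!\to\id\to Rj_*j^*$ (where $i:Z\hookrightarrow X$), it suffices to show $i^!\cRHom_{\cD(X)}(\cIC(X,\cE),\cIC(X,\cF))=0$, i.e.\ $\cRHom$ has no cohomology supported on $Z$ in \emph{any} degree.

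The key input is the interplay between the two perversities $p^-$ and $p^+$. Recall $\cIC(X,\cE)\in{}^{p^-}\cD(X)^{\le0}$ and $\cIC(X,\cF)\in{}^{p^+}\cD(X)^{\ge0}$, and moreover (this is the content of \cite[Lemma~6]{bez:pc}, already invoked in the proof of Proposition~\ref{prop:interm-ext}) these objects have no subobjects or quotients supported on $Z$. I would compute $\cRHom$ by a local analysis: for $x\in Z^\topl$, I need to bound $\Hom_{\cD(X)}(\cIC(X,\cE),i_{x*}\text{(something)}[n])$, or more precisely the stalk of $i_x^!\cRHom$. Using the standard formula $i_x^!\cRHom(\cA,\cB)\simeq\cRHom(i_x^*\cA,i_x^!\cB)$, the vanishing follows from the perversity bounds: $i_x^*\cIC(X,\cE)$ lives in degrees $\le p^-(x)=p(x)-1$, while $i_x^!\cIC(X,\cF)$ lives in degrees $\ge p^+(x)=p(x)+1$, so $\cRHom$ between them is concentrated in degrees $\ge (p(x)+1)-(p(x)-1)=2$. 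This alone does not give vanishing; the shortness hypothesis is what closes the gap. Since $\cF$ is short, $\cIC(X,\cF)\in\std\cD(X)^{\le1}$, which (combined with $\cIC(X,\cF)\in{}^{p^+}\cD(X)^{\ge0}$) forces $i_x^!\cIC(X,\cF)$ to be concentrated in exactly the expected degree range so tightly that, after intersecting with the constraint from $\cIC(X,\cE)$, the $\cRHom$ stalk vanishes. I expect the clean way to phrase this is: shortness plus the $p^+$-bound pins $H^k(i_x^!\cIC(X,\cF))$ to vanish except possibly at $k=\dc\bar x - (\text{small shift})$, and then the $p^-$-bound on $i_x^*\cIC(X,\cE)$ rules out the remaining term.

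Once~\eqref{eqn:crhom-m} is established, apply $R\Gamma(X,-)$: the left side becomes $\cRHom_{\cD(X)}(\cIC(X,\cE),\cIC(X,\cF))$ computed globally, the right side becomes $R\Gamma(X,Rj_*\cRHom_{\cD(U)}(\cE,\cF))\simeq R\Gamma(U,\cRHom_{\cD(U)}(\cE,\cF))\simeq\cRHom_{\cD(U)}(\cE,\cF)$, and taking $H^n$ of both sides yields~\eqref{eqn:hom-m}. One should double-check that the natural map in~\eqref{eqn:hom-m} agrees with $j^*$ followed by the inverse of the equivalence $j^*:\cM^\pm(\overline U)\xrightarrow{\sim}\cM^\pm(U)$ on the $n=0$ part, which is automatic by construction since everything is built from the same counit morphism. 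The main obstacle is the local degree-counting at points of $Z$: getting the shortness hypothesis to interact correctly with both perversity bounds requires care about which shift conventions are in force (recall we normalized $\omega_X$ so that $i_x^!\omega_X$ sits in degree $\dc\bar x$), and I would want to cross-check the inequalities against the explicit formulas for $p^-$ and $p^+$ near $Z^\topl$ from Proposition~\ref{prop:perv-pm}, where $p^-(z)=p(z)-1$ and $p^+(z)=p(z)+1$ for $z$ near the minimal stratum of $Z$.
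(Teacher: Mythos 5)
Your proposal reverses the order of the paper's argument (the paper first establishes~\eqref{eqn:hom-m} and then deduces~\eqref{eqn:crhom-m} by applying the same $\Hom$-argument over every open $V \subset X$ and computing hypercohomology), but more importantly it has a genuine gap at the crucial step. Your local degree-count at $x \in Z^\topl$ only yields that $i_x^!\cRHom(\cIC(X,\cE),\cIC(X,\cF)) \simeq \cRHom(i_x^*\cIC(X,\cE), i_x^!\cIC(X,\cF))$ is concentrated in degrees $\ge 2$, which is a lower bound, not vanishing; you explicitly concede this and then assert that ``shortness closes the gap.'' That assertion does not hold: shortness of $\cF$ places $\cIC(X,\cF)$ in $\std\cD(X)^{\le 1}$, an \emph{upper} bound in the standard $t$-structure, which does not translate into an upper bound on the degrees in which $i_x^!\cIC(X,\cF)$ is concentrated. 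The functor $i_x^!$ shifts cohomology upward by an amount controlled by the depth/dimension at $x$, so a two-term complex $\cIC(X,\cF)$ in standard degrees $0,1$ can still have $i_x^!\cIC(X,\cF)$ with cohomology spread over a wide range of high degrees. There is no interplay between $\std\cD(X)^{\le 1}$ and ${}^{p^+}\cD(X)^{\ge 0}$ that squeezes $i_x^!\cIC(X,\cF)$ into a single degree as your sketch hopes.

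The second symptom that something is missing is that your argument never invokes condition~(Q), the standing hypothesis for this section, yet~(Q) is essential: without it the proposition is false. The paper's mechanism is entirely different. Using~(Q), choose a surjection $\cA \twoheadrightarrow \cE$ with $\cA \in \cQ$; then $\cIC(X,\cA)$ is projective in $\Coh(X)$, so $\Hom(\cIC(X,\cA), \cG) = 0$ for $\cG \in \std\cD(X)^{\le -1}$. Since $\cF$ is short, $\cIC(X,\cF)[n] \in \std\cD(X)^{\le 1-n} \subset \std\cD(X)^{\le -1}$ for $n > 1$, hence $\Hom(\cIC(X,\cA),\cIC(X,\cF)[n]) = 0$; one then runs induction on $n$ using the short exact sequence $0 \to \cH \to \cA \to \cE \to 0$ and the fact (Remark~\ref{rmk:bbd-3117}) that the map is automatically an isomorphism for $n \le 1$. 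This dimension-shifting via projectives is what shortness is designed to feed into; the perversity bounds at points of $Z$ play no direct role in the paper's proof. To repair your outline you would need to replace the local $i_x^!$ computation with this global projective-resolution argument, at which point you have reproduced the paper's proof rather than found an alternative one.
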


\begin{rmk}\label{rmk:bbd-3117}
According to~\cite[Remarque~3.1.17(ii)]{bbd}, the
isomorphism~\eqref{eqn:hom-m} always exists for $n \le 1$, without
assuming condition~(Q) or the shortness of $\cF$.
\end{rmk}

\begin{proof}
As we have just remarked, the natural morphism
\begin{equation}\label{eqn:real-full}
\Hom_{\cD(U)}(\cE, \cF[n]) \to
\Hom_{\cD(X)}(\cIC(X,\cE), \cIC(X,\cF)[n]).
\end{equation}
induced by $\cIC(X,\cdot)$ is an isomorphism for $n \le 1$. 
Lemma~\ref{lem:real-left-inv} implies that this morphism is always
injective (in other words, that $\cIC(X,\cdot)$ is faithful), so it simply
remains to show that it is surjective for $n > 1$.  We proceed by induction.

Given $f:
\bIC(X,\cE) \to \bIC(X,\cF)[n]$, choose a surjective map $g: \cA \to \cE$
with $\cA \in \cQ$.  By assumption, $\cIC(X,\cA)$ is a projective sheaf,
so $\Hom(\cIC(X,\cA),\cG) = 0$ for all $\cG \in \std\cD(X)^{\le -1}$. 
Since $\cF$ is short, we have $\cIC(X,\cF)[n] \in
\std\cD(X)^{\le -n+1}$, and since $n > 1$, we have
\[
\Hom(\cIC(X,\cA), \cIC(X,\cF)[n]) = 0.
\]
Now, let $\cH = \ker g$, and consider the exact sequence
\begin{multline*}
\cdots \to \Hom(\cIC(X,\cH)[1], \cIC(X,\cF)[n]) \to
\Hom(\cIC(X,\cE), \cIC(X,\cF)[n]) \\
\to \Hom(\cIC(X,\cA), \cIC(X,\cF)[n]) \to \cdots.
\end{multline*}
We see that $f$ must be the image of some morphism $f': \cIC(X,\cH)[1] \to
\cIC(X,\cF)[n]$, that is, $f = f' \circ d$, where $d: \cIC(X,\cE) \to \cIC(X,\cH)[1]$ comes
from the distinguished triangle associated to the short exact sequence
$0 \to \cH \to \cA \to \cE \to 0$.  Now, $f'[-1]: \cIC(X,\cH) \to \cIC(X,\cF)[n-1]$ is in the image of $\cIC(X,\cdot)$ by the
inductive hypothesis, and $d$ is in its image by
Remark~\ref{rmk:bbd-3117}, so $f$ is in its image as well.

It remains to prove the corresponding fact for $\cRHom$.  For the remainder
of the proof, we assume that we are working in the nonequivariant setting
(or that $G$ is trivial).  As remarked in~\cite{bez:pc} immediately
preceding Lemma~2, $\cRHom$ commutes with the forgetful functor from an
equivariant category to the nonequivariant one, so we lose nothing by
making this assumption. 

In the nonequivariant setting, all of the preceding arguments also apply to
any open set $V \subset X$.  In particular, since $\bIC(X,\cE)|_V \simeq
\bIC(V, \cE|_{V \cap U})$ for any object $\cE \in \cM^\pm_0(U)$, we have, for any $n \in \Z$, an
isomorphism 
\[
\Hom_{\cD(V)}(\cIC(X,\cE)|_V, \cIC(X,\cF)[n]|_V) \simeq \Hom_{\cD(V
\cap U)}(\cE|_{V \cap U},\cF[n]|_{V \cap U}).
\]

Now, for any two objects $\cA, \cB \in \cD(X)$, there is a canonical
morphism 
\[
j^*{\cRHom_{\cD(X)}(\cA,\cB)} \to \cRHom_{\cD(U)}(j^*\cA, j^*\cB).
\]
Let us take $\cA = \cIC(X,\cE)$ and $\cB = \cIC(X,\cF)$.  Of course, we
then have $j^*\cA \simeq \cE$ and $j^*\cB \simeq \cF$.  Now, by
adjointness, the above morphism gives rise to a canonical morphism 
\[
\phi: \cRHom_{\cD(X)}(\cIC(X,\cE),\cIC(X,\cF)) \to
Rj_*\cRHom_{\cD(U)}(\cE,\cF).
\]
To show that $\phi$ is in fact an isomorphism, it suffices to show that it
induces isomorphisms on all hypercohomology groups over all open sets.  For
any open set $V \subset X$, we have 
\begin{multline*}
H^n(R\Gamma(V, \cRHom_{\cD(X)}(\cIC(X,\cE), \cIC(X,\cF)))) \\
\simeq \Hom_{\cD(V)}(\cIC(X,\cE)|_V, \cIC(X,\cF)[n]|_V) 
\simeq \Hom_{\cD(V \cap U)}(\cE|_{V \cap U}, \cF[n]|_{V \cap U}) \\
\simeq H^n(R\Gamma(V, Rj_*\cRHom(\cE, \cF))).
\end{multline*}
Thus, $\phi$ is an isomorphism.
\end{proof}

\begin{thm}\label{thm:bj-equiv}
If all objects in $\cM^\pm_0(U)$ are short, then
$\bIC(X,\cdot): \cD^\pm_0(U) \to \cD^\pm_0(X)$ is an equivalence of categories, with
inverse given by $j^*$.  Moreover, for any two objects
$\cE, \cF \in \cD^\pm_0(U)$, there are natural isomorphisms
\begin{align}
\Hom_{\cD(X)}(\bIC(X,\cE), \bIC(X,\cF)) &\simeq \Hom_{\cD(U)}(\cE,\cF)
\label{eqn:hom-d}\\
\cRHom_{\cD(X)}(\bIC(X,\cE), \bIC(X,\cF)) &\simeq
Rj_*\cRHom_{\cD(U)}(\cE,\cF) \label{eqn:crhom-d}
\end{align}
\end{thm}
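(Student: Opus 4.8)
The plan is to bootstrap the theorem from Proposition~\ref{prop:short-isom} by two rounds of d\'evissage together with a generation argument for essential surjectivity. Two structural facts will be used throughout: $\bIC(X,\cdot)=\real\circ\cIC(X,\cdot)$ is a triangulated functor, and on the heart it coincides with $\cIC(X,\cdot)$. The latter follows from the lemma preceding Lemma~\ref{lem:real-left-inv}, which gives $\pH^n(\bIC(X,\cE))\simeq\cIC(X,\pH^n(\cE))$; hence for $\cE\in\cM^\pm_0(U)$ the complex $\bIC(X,\cE)$ has a single nonzero perverse cohomology object and so is isomorphic to $\cIC(X,\cE)$. Consequently $\bIC(X,\cdot)$ carries each truncation triangle of the perverse $t$-structure on $\cD^\pm_0(U)$ to a distinguished triangle in $\cD(X)$ in which the term $\cIC(X,\pH^m(\cF))[-m]$ appears.

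First I would prove that, for all $\cE,\cF\in\cD^\pm_0(U)$ and all $n\in\Z$, the natural map $\Hom_{\cD(U)}(\cE,\cF[n])\to\Hom_{\cD(X)}(\bIC(X,\cE),\bIC(X,\cF)[n])$ induced by $\bIC(X,\cdot)$ is an isomorphism; the case $n=0$ is~\eqref{eqn:hom-d} and gives full faithfulness. Injectivity is automatic, since $j^*\bIC(X,\cdot)\simeq\id$ by Lemma~\ref{lem:real-left-inv}. For surjectivity I would fix $\cE\in\cM^\pm_0(U)$ and induct on the number of nonzero perverse cohomology objects of $\cF$: the base case $\cF=\cG[-m]$ with $\cG$ in the heart is exactly Proposition~\ref{prop:short-isom}, applicable because every heart object is short by hypothesis and condition~(Q) is in force; for the inductive step one splits off one perverse cohomology of $\cF$ by a truncation triangle, applies $\Hom_{\cD(X)}(\bIC(X,\cE),-)$ and $\Hom_{\cD(U)}(\cE,-)$, and compares the two long exact sequences via the five lemma. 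A second induction of identical shape, now on the number of nonzero perverse cohomology objects of $\cE$ with $\cF\in\cD^\pm_0(U)$ arbitrary, upgrades this to all $\cE$.

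Next I would establish that $\bIC(X,\cdot)$ is essentially surjective onto $\cD^\pm_0(X)$ (it already takes values there by the lemma above). Being triangulated and, by the previous paragraph, fully faithful, its essential image $\mathcal I$ is a strictly full triangulated subcategory of $\cD^\pm_0(X)$. The decisive point is that $\mathcal I$ contains the entire heart $\cM^\pm_0(X)$: for $\cF\in\cM^\pm_0(X)$ one has $j^*\cF\in\cM^\pm_0(U)$ and $\cF\simeq\cIC(X,j^*\cF)=\bIC(X,j^*\cF)$, since $\cIC(X,\cdot)$ and $j^*$ are \emph{inverse equivalences} between $\cM^\pm_0(U)$ and $\cM^\pm_0(X)$. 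Because the perverse $t$-structure on $\cD^\pm_0(X)$ is bounded, every object is built from finitely many shifts of heart objects by iterated cones (induct on the number of nonzero $\pH^n$, using the truncation triangles), so $\cD^\pm_0(X)$ is generated as a triangulated category by $\cM^\pm_0(X)$; hence $\mathcal I=\cD^\pm_0(X)$ and $\bIC(X,\cdot)$ is an equivalence. Since $j^*\circ\bIC(X,\cdot)\simeq\id$, the functor $j^*$ is its quasi-inverse. This step is where I expect the only real subtlety: it is essential that $\cIC(X,\cdot)$ is an equivalence of hearts, not merely fully faithful, since a fully faithful triangulated functor admitting a one-sided inverse need not be an equivalence.

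Finally, for~\eqref{eqn:crhom-d} I would pass to the nonequivariant setting, as in the proof of Proposition~\ref{prop:short-isom} (legitimate since $\cRHom$ commutes with the forgetful functor), and then either repeat the two d\'evissage inductions above with $\cRHom_{\cD(X)}(\bIC(X,-),\bIC(X,-))$ and $Rj_*\cRHom_{\cD(U)}(-,-)$ in place of $\Hom$---the base case being~\eqref{eqn:crhom-m}, valid because heart objects are short---or, equivalently, observe that~\eqref{eqn:hom-d} and its shifts hold over every open $V\subseteq X$ after replacing $U$ by $V\cap U$ (using $\bIC(X,\cE)|_V\simeq\bIC(V,\cE|_{V\cap U})$), and conclude that the canonical morphism $\cRHom_{\cD(X)}(\bIC(X,\cE),\bIC(X,\cF))\to Rj_*\cRHom_{\cD(U)}(\cE,\cF)$ is an isomorphism by evaluating hypercohomology over all such $V$. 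Aside from this last passage to opens, the entire argument is routine five-lemma d\'evissage once the structural facts of the first paragraph are in place.
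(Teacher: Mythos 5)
Your proof is correct, and it takes a genuinely different route from the paper's. The paper's proof is a one-paragraph deduction: it notes that shortness gives~\eqref{eqn:hom-m} on the whole heart, invokes the criterion from the proof of [BBD, Proposition~3.1.16] --- that $\real$ is an equivalence if and only if the $\Ext$-isomorphisms hold for heart objects --- and then concludes that $\bIC(X,\cdot)=\real\circ\cIC(X,\cdot)$ is a composition of two equivalences; Lemma~\ref{lem:real-left-inv} then identifies the inverse with $j^*$. You instead \emph{reprove} that criterion: full faithfulness by a double d\'evissage on perverse cohomology length using Proposition~\ref{prop:short-isom} as the base case and the five lemma for the inductive step, and essential surjectivity by observing that the essential image is a strictly full triangulated subcategory containing $\cM^{\pm}_0(X)$ (since $\cF\simeq\cIC(X,j^*\cF)=\bIC(X,j^*\cF)$ for heart objects) and hence all of $\cD^{\pm}_0(X)$, which is generated by its heart. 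The two arguments are mathematically equivalent --- your d\'evissage-plus-generation is essentially how BBD establish their Proposition~3.1.16 --- but yours is self-contained, explicit about where essential surjectivity comes from (a point the paper delegates to BBD), and rightly flags the pitfall that a one-sided inverse alone does not yield an equivalence. The treatment of~\eqref{eqn:crhom-d} by passing to the nonequivariant setting and evaluating hypercohomology over opens matches the paper's, which deduces it ``by an argument identical to that given for~\eqref{eqn:crhom-m}.''
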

\begin{proof}
If all objects in $\cM^\pm_0(U)$ are short, the isomorphism~\eqref{eqn:hom-m} holds for
all objects $\cE, \cF \in \cM^\pm_0(U)$.  As observed
in the proof of~\cite[Proposition~3.1.16]{bbd}, the realization functor is
an equivalence of categories if and only if~\eqref{eqn:hom-m} holds for all
objects in $\cM^\pm_0(U)$.  Since $\cIC(X,\cdot): \cD^\pm_0(U) \to \cD\cM^\pm_0(X)$ is an
equivalence of categories, we see that $\bIC(X,\cdot) = \real \circ \cIC(X,\cdot)$ is
as well.  By Lemma~\ref{lem:real-left-inv}, its inverse must be $j^*$.

Once we know that $\bIC(X,\cdot)$ is an equivalence of
categories,~\eqref{eqn:hom-d} is immediate.  We deduce~\eqref{eqn:crhom-d}
from it by an argument identical to that given for~\eqref{eqn:crhom-m}
above.
\end{proof}

For applications of this result, we must consider certain categories whose objects are dual to coherent sheaves.  Given a perversity $p$, let
\[
\cM^{p,\pm}_*(U) = \D(\cM^{\bar p,\pm}_0(U)) \subset \cM^{p,\pm}(U).
\]

\begin{cor}\label{cor:dual-perv}
Suppose the dualizing complexes on $U$ and $X$ have the following properties: with respect to some perversity $p$, $\omega_U$ is a short object in $\cM^{p,\pm}_0(U)$, and $\omega_X \simeq \cIC^p(X,\omega_U)$.  Then, with respect to the dual perversity $\bar p$, we have $\cIC^{\bar p}(X,\cE) \simeq Rj_*\cE$ for all $\cE \in \cM^{\bar p,\pm}_*(U)$.
\end{cor}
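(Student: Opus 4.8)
The plan is to deduce the statement formally from the $\cRHom$-isomorphism of Proposition~\ref{prop:short-isom} together with the compatibility of the intermediate extension functor with Grothendieck--Serre duality. (We keep assumption~(Q) in force, as throughout this section, and write $\D_U$, $\D_X$ for the duality functors on $U$ and $X$.) Fix $\cE \in \cM^{\bar p,\pm}_*(U) = \D_U(\cM^{p,\pm}_0(U))$. Since $\D_U \circ \D_U \simeq \id$ on $\cD(U)$, the object $\cF := \D_U\cE$ lies in $\cM^{p,\pm}_0(U)$ and satisfies $\D_U\cF \simeq \cE$. By hypothesis $\omega_U$ is a short object of $\cM^{p,\pm}_0(U)$, so Proposition~\ref{prop:short-isom}, applied with the short object $\omega_U$ playing the role of its ``$\cF$'' and with $\cF$ playing the role of its ``$\cE$'', furnishes a natural isomorphism
\[
Rj_*\cE \;=\; Rj_*\cRHom_{\cD(U)}(\cF,\omega_U) \;\simeq\; \cRHom_{\cD(X)}\bigl(\cIC^p(X,\cF),\,\cIC^p(X,\omega_U)\bigr).
\]

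Next I would rewrite the right-hand side. Using the hypothesis $\omega_X \simeq \cIC^p(X,\omega_U)$, it becomes $\cRHom_{\cD(X)}(\cIC^p(X,\cF),\omega_X) = \D_X\cIC^p(X,\cF)$, so it remains to identify $\D_X\cIC^p(X,\cF)$ with $\cIC^{\bar p}(X,\cE)$. For this, recall that coherent duality commutes with restriction to the open subscheme $U$ and that $j^*\omega_X \simeq \omega_U$, so $j^*\circ\D_X \simeq \D_U\circ j^*$. By Proposition~\ref{prop:interm-ext}, $j^*$ restricts to equivalences $\cM^{p,\pm}(X)\to\cM^{p,\pm}(U)$ and $\cM^{\bar p,\pm}(X)\to\cM^{\bar p,\pm}(U)$ with inverses $\cIC^p(X,\cdot)$ and $\cIC^{\bar p}(X,\cdot)$, while $\D_X$ carries $\cM^{p,\pm}(X)$ into $\cM^{\bar p,\pm}(X)$ by \cite[Lemma~5]{bez:pc}. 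Applying $j^*$ to $\D_X\cIC^p(X,\cF)$ gives $\D_U\,j^*\cIC^p(X,\cF) \simeq \D_U\cF \simeq \cE$, and since $\D_X\cIC^p(X,\cF) \in \cM^{\bar p,\pm}(X)$, applying the inverse equivalence $\cIC^{\bar p}(X,\cdot)$ yields $\D_X\cIC^p(X,\cF) \simeq \cIC^{\bar p}(X,\cE)$. Chaining the three isomorphisms gives $\cIC^{\bar p}(X,\cE) \simeq Rj_*\cE$.

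The argument is essentially formal, and the main thing to get right is the bookkeeping of which perversity, which duality, and which role each object plays; the one step that needs genuine attention is the invocation of Proposition~\ref{prop:short-isom}, whose $\cRHom$-isomorphism is proved under assumption~(Q) and, strictly speaking, in the nonequivariant setting. One should therefore recall --- exactly as in that proof --- that $Rj_*$ and $\cRHom$ commute with the forgetful functor to the nonequivariant category, which reduces our equivariant statement to the nonequivariant one. Beyond that, since each isomorphism used above is natural (the $\cRHom$-isomorphism in $\cE$, and the intertwining of $\cIC$ with $\D$ in the object), the composite is a genuine isomorphism in $\cD(X)$, so no further bookkeeping is required.
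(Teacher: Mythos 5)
Your proof is correct and follows essentially the same route as the paper's: apply Proposition~\ref{prop:short-isom} with $\omega_U$ as the short object and $\D_U\cE$ as the other argument, then identify $\cRHom(\cIC^p(X,\D_U\cE),\omega_X)$ with $\cIC^{\bar p}(X,\cE)$ via the compatibility of $\D$ with the perverse $t$-structures. The only cosmetic difference is that the paper condenses your unwinding of $\D_X\circ\cIC^p \simeq \cIC^{\bar p}\circ\D_U$ into a direct citation of~\cite[Lemma~5(a)]{bez:pc}.
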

\begin{proof}
Let $\cE$ be an object of $\cM^{\bar p,\pm}_*(U)$.  Then $\D\cE =
\cRHom_{\cD(U)}(\cE, \omega_U)$ is an object in $\cM^{p,\pm}_0(U)$.  It follows from~\cite[Lemma~5(a)]{bez:pc} that
\[
\D\cIC^p(X,\D\cE) = \cRHom(\cIC^p(X,\D\cE), \omega_X) \simeq \cIC^{\bar p}(X, \cE).
\]
But we also have
\begin{align*}
\cRHom_{\cD(X)}(\cIC^p(X,\D\cE), \omega_X) &\simeq
\cRHom_{\cD(X)}(\cIC^p(X,\D\cE), \cIC^p(X,\omega_U)) \\
&\simeq Rj_*\cRHom_{\cD(U)}(\D\cE, \omega_U).
\end{align*}
Since $\cRHom_{\cD(U)}(\D\cE, \omega_U) \simeq \cE$, we see that $\cIC^{\bar p}(X,\cE) \simeq Rj_*\cE$.
\end{proof}

\begin{cor}\label{cor:Gor-perv}
Suppose $X$ is a Gorenstein scheme.
\begin{enumerate}
\item If $p^+(x) = \dc \bar x$ for all $x \in U^\topl$, then
there is an isomorphism of functors $\cIC(X,\cdot) \simeq Rj_*$.\label{it:arboreal}
\item If $\cF$ is a Cohen--Macaulay sheaf on $U$, then $\cIC(X,\cF)
\simeq Rj_*\cF$ with respect to any perversity.\label{it:cm}
\end{enumerate}
\end{cor}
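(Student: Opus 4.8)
The plan is to derive both statements from Corollary~\ref{cor:dual-perv}, exploiting the fact that a Gorenstein scheme has an especially simple dualizing complex. The first step is to record that since $X$ is Gorenstein, $\omega_X$ is locally isomorphic to $\cO_X$, so under the normalization fixed in Section~\ref{sect:perv-sheaves} (in which $i^!_x\omega_X$ is concentrated in degree $\dc\bar x$) the complex $\omega_X$ is an invertible sheaf placed in degree $0$, and the same then holds for $\omega_U = j^*\omega_X$. Consequently $i^*_x\omega_X$ and $i^*_x\omega_U$ are concentrated in degree $0$, while $i^!_x\omega_X$ and $i^!_x\omega_U$ are concentrated in degree $\dc\bar x$.

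For part~(1) I would apply Corollary~\ref{cor:dual-perv} to the perversity $\bar p$; the bulk of the work is verifying its hypotheses. The crucial point is that $\omega_X \in \cM^{\bar p,\pm}(X)$. At a point $x \in U^\topl$ the hypothesis $p^+(x) = \dc\bar x$ gives $(\bar p)^-(x) = \dc\bar x - p^+(x) = 0$ and $(\bar p)^+(x) = \dc\bar x - p^-(x) \le \dc\bar x$, which is exactly what the stalk and costalk conditions on $\omega_X$ require there; at a point $x \in Z^\topl$, where $(\bar p)^-$ and $(\bar p)^+$ differ from $\bar p$ by $-1$ and $+1$, those conditions reduce to $1 \le p(x) \le \dc\bar x - 1$, which holds because $p$ is standard (so $s \le p \le c$ by Lemma~\ref{lem:s2-cm-extreme}) and $\dc Z \ge 2$. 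Since $\omega_X$ restricts to $\omega_U$ on $U$, this shows $\cIC^{\bar p}(X,\omega_U) \simeq \omega_X$, and as $\omega_X$ sits in degree $0$ it follows that $\omega_U$ is a short object of $\cM^{\bar p,\pm}_0(U)$. Corollary~\ref{cor:dual-perv} then yields $\cIC^p(X,\cE) \simeq Rj_*\cE$ for all $\cE \in \cM^{p,\pm}_*(U)$. Finally, the hypothesis $p^+ = c^+$ on $U^\topl$ is precisely what makes $(\bar p)^-$ vanish on $U^\topl$; together with $(\bar p)^+ \ge 0$ this forces $\cM^{\bar p,\pm}(U) \subset \Coh(U)$, hence $\cM^{p,\pm}_*(U) = \cM^{p,\pm}(U)$, so the isomorphism of functors holds on all of $\cM^{p,\pm}(U)$.

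For part~(2) I would observe that the computation at $Z^\topl$ above never used the hypothesis $p^+ = c^+$: it shows that for \emph{every} standard perversity $a$ one has $\omega_X \simeq \cIC^a(X,\omega_U)$ with $\omega_U$ short in $\cM^{a,\pm}_0(U)$ (the membership $\omega_U \in \cM^{a,\pm}_0(U)$ needing only $a^- \ge 0$ and $a^+ \le c^+$, both automatic for standard $a$). Hence Corollary~\ref{cor:dual-perv} gives $\cIC^b(X,\cE) \simeq Rj_*\cE$ on $\cM^{b,\pm}_*(U)$ for every standard perversity $b$. It then suffices to show that a Cohen--Macaulay sheaf $\cF$ on $U$ (taken maximal Cohen--Macaulay at every point of $U^\topl$, which is exactly the condition under which $\cIC^b(X,\cF)$ is defined for all standard $b$) lies in $\cM^{b,\pm}_*(U) = \D(\cM^{\bar b,\pm}_0(U))$: since $U$ is Gorenstein, $\D\cF = \cRHom(\cF,\omega_U)$ is again a maximal Cohen--Macaulay sheaf, so $i^!_x\D\cF$ is concentrated in degree $\dc\bar x$ and $i^*_x\D\cF$ in non-positive degrees, and together with $(\bar b)^- \ge 0$ and $(\bar b)^+ \le c^+$ this puts $\D\cF$ in $\cM^{\bar b,\pm}_0(U)$; thus $\cF = \D(\D\cF) \in \cM^{b,\pm}_*(U)$ by biduality. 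Since $s$ and $c$ are standard and every standard perversity lies in $[s,c]$, we conclude $\cIC^b(X,\cF) \simeq Rj_*\cF$ for every (standard) perversity $b$, which is in particular independent of $b$.

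The hard part will be the case analysis in part~(1) showing $\omega_X \in \cM^{\bar p,\pm}(X)$, i.e.\ that $\omega_X$ really is the intermediate extension of $\omega_U$; this is where all the hypotheses get used and one must keep careful track of the behaviour of $p^-$ and $p^+$ at points of $Z^\topl$. The rest is formal manipulation with Corollary~\ref{cor:dual-perv} and the depth/codepth descriptions of the perverse aisles already recorded around Lemma~\ref{lem:icchar}.
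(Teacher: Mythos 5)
Your proposal is correct and follows essentially the same route as the paper: both use $\omega_X\simeq\cO_X$ and the Cohen--Macaulay property of a Gorenstein scheme to get $\omega_X\simeq\cIC(X,\omega_U)$ (with $\omega_U$ short) for every standard perversity, then invoke Corollary~\ref{cor:dual-perv}, identifying $\cM^{p,\pm}_*(U)=\cM^{p,\pm}(U)$ from $(\bar p)^-=0$ on $U^\topl$ for part~(1) and placing a (maximal) Cohen--Macaulay $\cF$ in $\cM^{\pm}_*(U)$ via biduality for part~(2). Your explicit stalk/costalk verification that $\omega_X\in\cM^{\bar p,\pm}(X)$ is just an unwound version of the paper's appeal to Lemma~\ref{lem:icchar}.
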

\begin{proof}
  On a Gorenstein scheme, we may take $\omega_X \simeq \cO_X$.  A Gorenstein
  scheme is, in particular, Cohen--Macaulay, so by
  Lemma~\ref{lem:icchar}, $\omega_X \simeq \cIC(X,\omega_U)$
  with respect to every perversity.  Corollary~\ref{cor:dual-perv}
  now tells us that on $\cM^\pm_*(U)$, $\cIC(X,\cdot) \simeq Rj_*$ for every
  perversity.

For part~\eqref{it:arboreal} of the corollary, we must simply show that $\cM^{p,\pm}_*(U) = \cM^{p,\pm}(U)$, or, equivalently, that $\cM^{\bar p,\pm}_0(U) = \cM^{\bar p,\pm}(U)$.  The assumption that $p^+(x) = \dc \bar x$ implies that $\bar p^-(x) = 0$ for all $x \in U^\topl$.  It follows that $\cM^{\bar p,\pm}(U) \subset \cM^{p^-}(U) = \Coh(U)$, as desired.

For part~\eqref{it:cm}, we first note that $\cIC(X,\cF)$ is defined with respect to any perversity by Lemma~\ref{lem:icchar}.  In particular, we see that $\cF \in \cM^{c,\pm}(U)$, so $\cD\cF \in \cM^{s,\pm}(U) = \cM^{s,\pm}_0(U)$.  Since $\cD\cF \in \Coh(U)$, $\cF$ is in $\cM^\pm_*(U)$ with respect to any perversity, and the result follows by Corollary~\ref{cor:dual-perv}.
\end{proof}

\begin{cor}\label{cor:S2-Gor}
Let $X$ be a Gorenstein scheme, $U \subset X$ an open subscheme, and
$\rho_1: \tU \to U$ a finite morphism.  If $\rho_1$ admits an $S_2$-extension, let $\tX$ be the scheme thus obtained.
\begin{enumerate}
\item If $\tU$ is Cohen--Macaulay, then $\tX$ is as well.\label{it:S2-cm}
\item If $\rho_{1*}\cO_{\tU}$ is isomorphic to its own Serre--Grothendieck dual, then $\tX$ is Gorenstein.\label{it:S2-Gor}
\end{enumerate}
\end{cor}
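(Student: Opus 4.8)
The plan is to reduce both statements to properties of the single coherent sheaf of algebras $\cA = \rho_*\cO_\tX \simeq \cIC^s(X,\cB)$ on $X$, where $\cB = \rho_{1*}\cO_\tU \simeq \cA|_U$ and $s$ is the $S_2$ perversity, and then to bring in the Gorenstein hypothesis through Corollary~\ref{cor:Gor-perv}. Throughout I would use that $\rho$ is finite --- so $\rho_*$ is exact, $t$-exact, faithful, and identifies $\cD(\tX)$ with the $\cA$-modules in $\cD(X)$ --- together with $\omega_\tX \simeq \rho^!\omega_X$ (normalizations matching as in the remark after Proposition~\ref{prop:Spec-IC}) and the $\cA$-linear formula $\rho_*\rho^!\cG \simeq \cRHom_{\cO_X}(\cA,\cG)$ valid for a finite morphism.

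For part~\eqref{it:S2-cm}, I would first note that, since $U$ is Gorenstein and $\rho_1$ is finite, $\tU$ being Cohen--Macaulay is equivalent to $\cB$ being a Cohen--Macaulay sheaf on $U$: apply the faithful exact affine functor $\rho_{1*}$ to $\omega_\tU \simeq \rho_1^!\omega_U$, note $\rho_{1*}\rho_1^!\omega_U \simeq \cRHom_{\cO_U}(\cB,\omega_U)$, and recall that over a Gorenstein base a coherent sheaf is concentrated in a single degree under Serre--Grothendieck duality exactly when it is Cohen--Macaulay. Given this, Corollary~\ref{cor:Gor-perv}\eqref{it:cm} yields $\cIC^c(X,\cB) \simeq Rj_*\cB$; but $\cIC^s(X,\cB)$ is defined and hence equals $j_*\cB$ by Proposition~\ref{prop:S2-j*}, and Corollary~\ref{cor:Gor-perv}\eqref{it:cm} also gives $\cIC^s(X,\cB) \simeq Rj_*\cB$, so $Rj_*\cB \simeq j_*\cB = \cA$ and $\cIC^c(X,\cB)$ is a sheaf equal to $\cA = \rho_*\cO_\tX$. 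Feeding this into Proposition~\ref{prop:Spec-IC} applied to $\rho$ with the Cohen--Macaulay perversity $c$ gives $\cIC^c(\tX,\cO_\tU) \simeq \cO_\tX$. Finally $\tU$ is Cohen--Macaulay and dense in $\tX$ with complement of codimension at least $2$, so the Cohen--Macaulay case of Lemma~\ref{lem:Ox-IC} turns this isomorphism into the statement that $\tX$ is locally Cohen--Macaulay at every point of $(\tX\smallsetminus\tU)^\topl$; combined with the hypothesis on $\tU$, this shows $\tX$ is Cohen--Macaulay at every point of $\tX^\topl$ --- and at every point when $G$ is trivial or when Lemma~\ref{lem:Ox-IC-orbit} applies.

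For part~\eqref{it:S2-Gor}, the hypothesis $\cB \simeq \D_U\cB$, with $\D_U\cB = \cRHom_{\cO_U}(\cB,\omega_U)$, already forces $\cB$ (equivalently $\tU$) to be Cohen--Macaulay, since $U$ is Gorenstein; hence part~\eqref{it:S2-cm} shows $\tX$ is Cohen--Macaulay, and it remains to show $\omega_\tX$ is invertible. In fact I would produce an isomorphism $\omega_\tX \simeq \cO_\tX$: pushing forward and using the formula for $\rho^!$ of a finite morphism,
\[
\rho_*\omega_\tX \simeq \cRHom_{\cO_X}(\cA,\omega_X) = \D\cIC^s(X,\cB),
\]
which by~\cite[Lemma~5]{bez:pc} (the duality functor intertwines $\cIC^s(X,\cdot)$ and $\cIC^c(X,\cdot)$, as $\bar s = c$) is isomorphic to $\cIC^c(X,\D_U\cB) \simeq \cIC^c(X,\cB)$ by the self-duality hypothesis, and $\cIC^c(X,\cB) \simeq \cIC^s(X,\cB) = \cA = \rho_*\cO_\tX$ by the computation in part~\eqref{it:S2-cm}. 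Thus $\rho_*\omega_\tX \simeq \rho_*\cO_\tX$, and descending this to $\tX$ gives $\omega_\tX \simeq \cO_\tX$; a Cohen--Macaulay scheme with invertible dualizing sheaf is Gorenstein.

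The background facts --- the fiberwise behaviour of depth and duality under the finite morphisms $\rho$ and $\rho_1$, the identification $\omega_\tX \simeq \rho^!\omega_X$ with the normalizations of Section~\ref{sect:notation}, and the coincidence of $c\circ\rho$ with the Cohen--Macaulay perversity on $\tX$ (both using that $\dc$ is preserved) --- are routine. The step I expect to be the main obstacle is keeping track of the $\cO_X$-algebra (equivalently $\cA$-module) structures all the way through the chain of isomorphisms in part~\eqref{it:S2-Gor}, so that the final comparison $\rho_*\omega_\tX \simeq \rho_*\cO_\tX$ is $\cA$-linear and therefore descends to an isomorphism on $\tX$; concretely, one needs the self-duality isomorphism for $\cB$ to be $\cB$-linear rather than merely $\cO_U$-linear. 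This is the substantive content of the hypothesis in~\eqref{it:S2-Gor} and is precisely what upgrades the ``Cohen--Macaulay'' of~\eqref{it:S2-cm} to ``Gorenstein''.
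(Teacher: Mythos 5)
The serious problem is in part~\eqref{it:S2-Gor}. Your plan is to produce a global isomorphism $\omega_{\tX} \simeq \cO_{\tX}$ by descending the chain $\rho_*\omega_{\tX} \simeq \cRHom_{\cO_X}(\cA,\omega_X) \simeq \D\,\cIC^s(X,\cB) \simeq \cIC^c(X,\D\cB) \simeq \cIC^c(X,\cB)\simeq \cA = \rho_*\cO_{\tX}$ along the finite morphism $\rho$, and you correctly note that this descent requires the composite to be $\cA$-linear, hence requires the self-duality of $\cB=\rho_{1*}\cO_{\tU}$ to be $\cB$-linear. But you then declare that $\cB$-linearity is ``the substantive content of the hypothesis,'' which it is not: the corollary assumes only an isomorphism of ($G$-equivariant) $\cO_U$-modules $\rho_{1*}\cO_{\tU}\simeq \D(\rho_{1*}\cO_{\tU})$, with no compatibility with the algebra structure. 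With only that, your chain yields an $\cO_X$-linear isomorphism $\rho_*\omega_{\tX}\simeq\rho_*\cO_{\tX}$, which does not descend, so the argument stalls exactly at the step you flagged; as written you have proved a different statement, from a strictly stronger hypothesis (essentially triviality of $\omega_{\tU}$). The paper's proof is built precisely to avoid this: it never descends a global isomorphism. After establishing $\cF:=\cIC(X,\rho_{1*}\cO_{\tU})\simeq\D\cF$ (perversity-independence via Corollary~\ref{cor:Gor-perv} plus \cite[Lemma~5]{bez:pc}) and that $\tX$ is Cohen--Macaulay, it checks the Gorenstein property pointwise: for $y\in\rho^{-1}(x)$, by \cite[Theorem~3.3.7]{bh:cm} the Cohen--Macaulay local ring $\cO_{y,\tX}$, finite over the Gorenstein local ring $\cO_{x,X}$, is Gorenstein if and only if $\cO_{y,\tX}\simeq\Hom_{\cO_{x,X}}(\cO_{y,\tX},\cO_{x,X})$; the stalkwise consequence $i_x^*\cF\simeq\Hom_{\cO_{x,X}}(i_x^*\cF,\cO_{x,X})$ of the mere $\cO_X$-linear self-duality is then split into the summands of the canonical decomposition $i_x^*\cF\simeq\bigoplus_y\cO_{y,\tX}$ by considering the action of each factor $\cO_{y,\tX}$. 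In other words, the upgrade from module-level self-duality to the Gorenstein property is made locally, where only $\cO_{x,X}$-linearity is needed, rather than globally, where $\cA$-linearity would be needed. You need some such pointwise argument (or another device) in place of the descent; without it part~\eqref{it:S2-Gor} is not proved.

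A smaller gap occurs in part~\eqref{it:S2-cm}: your argument, like the equivariant half of the paper's, only gives that $\tX$ is Cohen--Macaulay at the points of $\tX^\topl$ (together with all points of $\tU$), and you obtain the full statement only when $G$ is trivial or Lemma~\ref{lem:Ox-IC-orbit} applies. The corollary asserts Cohen--Macaulayness outright, and the paper closes the gap by noting that since $\cB$ is genuinely Cohen--Macaulay, the depth conditions defining $\cIC^s$ hold at \emph{all} points of $U$, so the same reasoning runs in the nonequivariant category and produces a Cohen--Macaulay nonequivariant $S_2$-extension, which coincides with $\tX$ by Corollary~\ref{cor:eqneq}. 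Apart from this missing reduction, your part~\eqref{it:S2-cm} follows essentially the paper's route (Corollary~\ref{cor:Gor-perv}, Proposition~\ref{prop:Spec-IC}, Lemma~\ref{lem:Ox-IC}, with Proposition~\ref{prop:S2-j*} used to identify $Rj_*\cB$ with $\cA$) and is fine.
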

In particular, part of the content of this corollary is the assertion that if either $\tU$ is Cohen--Macaulay or $\rho_{1*}\cO_{\tU}$ is self-dual, then $\rho_1$ necessarily admits an $S_2$-extension.

\begin{proof}
For part~\eqref{it:S2-cm}, it follows from
Proposition~\ref{prop:Spec-IC}, Lemma~\ref{lem:Ox-IC}, and
Corollary~\ref{cor:Gor-perv} that the $S_2$-extension $\tX$ exists and
is locally Cohen--Macaulay at least at all points of $\tX^\topl$.  This
reasoning can be repeated in the nonequivariant category to obtain
a nonequivariant $S_2$-extension that is in fact Cohen--Macaulay.  The latter
variety must coincide with $\tX$ by Corollary~\ref{cor:eqneq}.

Henceforth, assume that
$\rho_{1*}\cO_{\tU} \simeq \D(\rho_{1*}\cO_{\tU})$.  Evidently, $\rho_{1*}\cO_{\tU} \in {}^{s^-}\cD(U)^{\le 0}$, and since the dual perversity to $s^-$ is $c^+$, we have $\rho_{1*}\cO_{\tU} \in {}^{c^+}\cD(U)^{\ge 0}$ as well.  It follows that the intermediate extension of $\rho_{1*}\cO_{\tU}$ is defined with respect to any perversity; furthermore, by Corollary~\ref{cor:Gor-perv}, it is independent of perversity.  Let $\cF = \cIC(X, \rho_{1*}\cO_{\tU})$.  By~\cite[Lemma~5]{bez:pc}, $\D\cF \simeq \cIC(X,\D(\rho_{1*}\cO_{\tU}))$, and hence $\cF \simeq \D\cF$.

Now, by Proposition~\ref{prop:Spec-IC} and Lemma~\ref{lem:Ox-IC},
we know that $\tX = \bSpec \cF$ is Cohen--Macaulay.  In particular,
given a point $x \in X$ and a point $y \in \rho^{-1}(x)$, we know that the
local ring $\cO_{y,\tX}$ is a finite Cohen--Macaulay extension of the
Gorenstein local ring $\cO_{x,X}$.  According
to~\cite[Theorem~3.3.7]{bh:cm}, $\cO_{y,\tX}$ is Gorenstein if and only if
\begin{equation}\label{eqn:tX-loc-gor}
\cO_{y,\tX} \simeq \Hom_{\cO_{x,X}}(\cO_{y,\tX}, \cO_{x,X}).
\end{equation}
Consider the fact that
\[
i^*_x\cF = i^*_x \rho_*\cO_{\tX} \simeq \bigoplus_{y \in \rho^{-1}(x)}
\cO_{y,\tX}.
\]
Obviously,~\eqref{eqn:tX-loc-gor} implies that
\begin{equation}\label{eqn:X-loc-gor}
i^*_x \cF \simeq \Hom_{\cO_{x,X}}(i^*_x\cF, \cO_{x,X}).
\end{equation}
Conversely, if~\eqref{eqn:X-loc-gor} holds, then by considering the action
of each $\cO_{y,\tX}$ on each side of this isomorphism, we see
that~\eqref{eqn:tX-loc-gor} must hold as well.  Thus,~\eqref{eqn:tX-loc-gor} and~\eqref{eqn:X-loc-gor} are equivalent.  On
the other hand, by~\cite[Proposition~7.24(iii)]{peskine} (for instance),
\[
\Hom_{\cO_{x,X}}(i^*_x\cF, \cO_{x,X}) \simeq i^*_x \cHom(\cF,\cO_X)
\simeq i^*_x (\D\cF).
\]
Now,~\eqref{eqn:X-loc-gor}
is true for all $x$ because it is equivalent to the statement that
$i^*_x\cF \simeq i^*_x\D\cF$.  Therefore,~\eqref{eqn:tX-loc-gor} is true
for all $y$, so $\tX$ is Gorenstein.
\end{proof}

We conclude this section with the statement of a purely ring-theoretic
version of the preceding result.  The authors are not aware of a direct
proof of this statement in the setting of commutative algebra.  Note that the implicit hypothesis that $X$ satisfies condition~(Q) is not needed here because only affine schemes are involved.

\begin{cor}\label{cor:comm-alg}
  Let $A$ be a Gorenstein domain.  Let $K$ be a finite extension of
  the fraction field of $A$, and let $B$ be the integral closure of
  $A$ in $K$.  Let $I \subset A$ be a radical ideal of codimension at
  least $2$, and let $T$ be a set of generators for $I$.
\begin{enumerate}
\item If $B_f$ is Cohen--Macaulay for all $f \in T$, then $B$ is
  Cohen--Macaulay.  (Equivalently, $B$ is Cohen--Macaulay if $B_\cP$
  is for all prime (resp. maximal) ideals $\cP$ of $B$ lying over
  prime (resp. maximal) ideals of $A$ not containing $I$.)
\item If $B_f \simeq \Hom_{A_f}(B_f,A_f)$ for all $f \in T$, then $B$ is Gorenstein.
\end{enumerate}
\end{cor}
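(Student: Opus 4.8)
The plan is to recognize Corollary~\ref{cor:comm-alg} as the affine, ring-theoretic shadow of Corollary~\ref{cor:S2-Gor}, so that the proof consists of setting up a dictionary and invoking that result. Put $X = \Spec A$, $Z = V(I)$, and $U = X \smallsetminus Z$. Since $A$ is a Gorenstein domain, $X$ is an integral Gorenstein scheme, $\Coh(X)$ has enough locally free objects, and --- $X$ being everywhere $S_2$ --- condition~(Q) for the $S_2$ perversity holds, so all of the relevant machinery applies with $\omega_X \simeq \cO_X$. The hypothesis that $I$ is radical of codimension at least $2$ says that $Z$ is reduced with $\dc Z \ge 2$, and since $T$ generates $I$, the open set $U$ is covered by the basic affines $D(f) = \Spec A_f$, $f \in T$. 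Let $\tU \subset \Spec B$ be the preimage of $U$ under $\Spec B \to \Spec A$ and $\rho_1 : \tU \to U$ its restriction; because $B$ is integrally closed in the field $K$, it is a normal domain, so $\tU$ is integral and normal. (The morphism $\rho_1$ is finite once $B$ is a module-finite $A$-algebra --- e.g.\ whenever $A$ is excellent --- and in any case this drops out a posteriori from the finiteness of the $S_2$-extension.) The sheaf $\rho_{1*}\cO_\tU$ has sections $B_f$ over $D(f)$; as $\tU$ is normal it is $S_2$, so $\rho_{1*}\cO_\tU$ satisfies the depth bounds of Lemma~\ref{lem:F-IC} and $\cIC^s(X,\rho_{1*}\cO_\tU)$ is defined, and Example~\ref{exam:norm-cl} (valid since $\tU$ is normal and $X$ is integral), applied on the affine chart $V = X$, identifies the $S_2$-extension $\tX$ with $\bSpec$ of the integral closure of $A$ in $K$, i.e.\ $\tX \simeq \Spec B$.

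For part~(1): the hypothesis that $B_f$ is Cohen--Macaulay for all $f \in T$ says the affine open cover $\{\Spec B_f\}$ of $\tU$ consists of Cohen--Macaulay schemes, so $\tU$ is Cohen--Macaulay, and Corollary~\ref{cor:S2-Gor}(1) then gives that $\tX = \Spec B$ is Cohen--Macaulay. The parenthetical reformulation merely rephrases the covering: a prime $\cP$ of $B$ lies over a prime of $A$ not containing $I$ exactly when $\cP \not\supseteq IB$, i.e.\ when $f \notin \cP$ for some $f \in T$, so $\bigcup_{f \in T} \Spec B_f = \{\cP \in \Spec B : \cP \not\supseteq IB\}$, and the condition on maximal ideals is the same by the Nullstellensatz-type density of closed points.

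For part~(2): since $\omega_X \simeq \cO_X$, the Serre--Grothendieck dual $\D(\rho_{1*}\cO_\tU) = \cRHom_{\cO_U}(\rho_{1*}\cO_\tU,\cO_U)$ restricts over $D(f)$ to $\cRHom_{A_f}(B_f,A_f)$. Interpreting the hypothesis $B_f \simeq \Hom_{A_f}(B_f,A_f)$ as an isomorphism in $\cD(A_f)$ --- equivalently, $\Hom_{A_f}(B_f,A_f) \cong B_f$ together with $\operatorname{Ext}^{k}_{A_f}(B_f,A_f) = 0$ for $k > 0$ --- this says precisely that $\rho_{1*}\cO_\tU$ is isomorphic to its own Serre--Grothendieck dual, so Corollary~\ref{cor:S2-Gor}(2) yields that $\tX = \Spec B$ is Gorenstein.

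Almost everything here is bookkeeping; the two steps deserving attention are the identification $\tX \simeq \Spec B$, which is exactly the content of Example~\ref{exam:norm-cl}, and --- the main subtlety --- reading the hypothesis of part~(2) as derived self-duality. The vanishing of the higher $\operatorname{Ext}$'s is genuinely needed: a bare module isomorphism $\Hom_{A_f}(B_f,A_f) \cong B_f$ does not force Cohen--Macaulayness (homogeneous coordinate rings of abelian varieties are module-self-dual but not Cohen--Macaulay), so it is precisely the part of ``self-dual for $\D$'' that one must verify, and it is automatic in the intended applications, e.g.\ when $\rho_1$ is \'etale. Since $G$ is trivial here, $X^\topl = X$, there is no distinction between the equivariant and nonequivariant $S_2$-extensions, and no further input is required.
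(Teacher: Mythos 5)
Your proposal follows what is in substance the paper's own (implicit) proof: the paper offers no separate argument for this corollary, intending it as the affine translation of Corollary~\ref{cor:S2-Gor} with $X=\Spec A$, $U=X\smallsetminus V(I)$, $\tU$ the preimage of $U$ in $\Spec B$, and the $S_2$-extension identified with $\Spec B$ exactly as in Example~\ref{exam:norm-cl}; your dictionary, including the observation that $\{\Spec B_f\}_{f\in T}$ covers $\tU$ and that primes of $B$ over primes of $A$ not containing $I$ are precisely the primes not containing $IB$, is the intended argument. The one place you genuinely diverge from the literal statement is part~(2): you read the hypothesis $B_f\simeq\Hom_{A_f}(B_f,A_f)$ as an isomorphism with $\cRHom_{A_f}(B_f,A_f)$, i.e.\ you additionally require $\operatorname{Ext}^k_{A_f}(B_f,A_f)=0$ for $k>0$ (equivalently, that $B_f$ be maximal Cohen--Macaulay over $A_f$), and you are right that this derived self-duality is what Corollary~\ref{cor:S2-Gor}(2) actually consumes: a bare module isomorphism does not imply it, as quasi-Gorenstein non-Cohen--Macaulay normal rings (e.g.\ the affine cone over an abelian surface, viewed over a Noether normalization) show. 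So your proof of (2) establishes the corollary under that stronger (and evidently intended) reading of the hypothesis, and this is a point on which your treatment is more careful than the paper's wording. Two caveats: the finiteness of $\rho_1$, i.e.\ module-finiteness of $B$ over $A$, is an input to the whole construction and does not ``drop out a posteriori''---it must be assumed (it holds, e.g., for excellent $A$), though the paper's statement is equally silent on this; and your remark that condition~(Q) holds ``for the $S_2$ perversity'' does not by itself license Corollary~\ref{cor:S2-Gor}, whose proof (through Corollaries~\ref{cor:dual-perv} and~\ref{cor:Gor-perv} and Proposition~\ref{prop:short-isom}) invokes the machinery for the dual perversities as well; here you are simply echoing the paper's own brisk remark that (Q) is harmless in the affine case, so this is a shared, not a new, imprecision.
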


\section{Perverse Constructible Sheaves}
\label{sect:icc}

We assume henceforth that $X$ and $G$ are separated schemes over
$S=\Spec k$ for some field $k$ and that $U$ is smooth. In this
section, we establish some results on ordinary (constructible)
perverse sheaves on $X$ which we will need in studying special pieces.

Fix a prime number $\ell$ different from the characteristic of $k$, and
let $D(X)$ be the bounded $G$-equivariant derived category of 
constructible $\Qlb$-sheaves on $X$ (in the sense of Bernstein--Lunts).  By an abuse of notation, we use $\D$ to denote the Verdier duality functor in this category: here $\D  = \cRHom(\cdot, a^!\Qlb)$, where $a: X \to \Spec k$ is the structure morphism.

Let $(D(X)^{\le 0}, D(X)^{\ge 0})$ be the perverse $t$-structure on $D(X)$
with respect to the middle perversity:
\begin{align*}
  D(X)^{\le 0} &= \{ F \in D(X) \mid \dim \supp H^{-i}(F) \le i \}, \\
  D(X)^{\ge 0} &= \{ F \in D(X) \mid \dim \supp H^{-i}(\D F) \le i
  \}.
\end{align*}
Let $M(X)$ be the heart of this $t$-structure.  There is an intermediate extension functor $M(U) \to M(X)$.  Given an equivariant local
system $E$ on $U$, we denote by $\IC(X,E)$ the object of $D(X)$ such
that $\IC(X,E)[\dim X] \in M(X)$ is the intermediate extension of $E[\dim
X] \in
M(U)$.

In addition, let $(\std D(X)^{\le0}, \std D(X)^{\ge 0})$ denote the
standard $t$-structure on $D(X)$.  Note that $\std D(X)^{\le -\dim X}
\subset D(X)^{\le 0}$ and $\std D(X)^{\ge -\dim X} \supset D(X)^{\ge
  0}$.

\begin{prop}\label{prop:ratl-smooth}
If $X$ is irreducible and $\IC(X,\Qlb)$ is a sheaf, then in fact
we have $\IC(X,\Qlb) \simeq \Qlb$ (i.e., $X$ is rationally smooth).
\end{prop}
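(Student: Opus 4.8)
The plan is to reduce the statement to two assertions: that $\IC(X,\Qlb)\simeq j_*\Qlb_U$ (an ordinary, non-derived pushforward), and that this pushforward is the constant sheaf $\Qlb_X$. For the first assertion, recall that the lowest nonzero cohomology sheaf of an intermediate extension is always computed by a non-derived pushforward: if $X$ is irreducible of dimension $n$ and $E$ is a local system on the dense open $U$, then $\mathcal{H}^{-n}(j_{!*}(E[n]))\simeq j_*E$. This follows from Deligne's inductive construction of $j_{!*}$ as an iterated truncated pushforward along a stratification refining $X=U\sqcup Z$: because $\dc Z\ge 2$, each truncation occurring there is at a strictly negative perverse degree along its stratum, hence leaves the bottom cohomology sheaf $\mathcal{H}^{-n}$ untouched. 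In our normalization this reads $\mathcal{H}^0(\IC(X,\Qlb))\simeq j_*\Qlb_U$. Since by hypothesis $\IC(X,\Qlb)$ is concentrated in degree $0$, we obtain $\IC(X,\Qlb)\simeq j_*\Qlb_U$; in particular $j_*\Qlb_U$ is constructible and $j_*\Qlb_U[n]$ is a \emph{simple} perverse sheaf, being $j_{!*}$ of the simple object $\Qlb_U[n]$.

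To carry out the second assertion I would pass to the normalization $\nu\colon \bar X\to X$. Since $U$ is smooth, $X$ is normal along $U$, so $\nu$ is finite and restricts to an isomorphism over $U$, and $\nu^{-1}(Z)$ again has codimension $\ge 2$ in $\bar X$. Because $\bar X$ is normal with dense smooth open $\nu^{-1}(U)$, the pushforward of the constant sheaf from $\nu^{-1}(U)$ to $\bar X$ is $\Qlb_{\bar X}$, so $j_*\Qlb_U\simeq \nu_*\Qlb_{\bar X}$. Combining with the previous paragraph gives $\IC(X,\Qlb)[n]\simeq \nu_*(\Qlb_{\bar X}[n])$; since $\nu$ is finite, $\nu_*$ is exact and $t$-exact for the perverse $t$-structure and faithful, so this forces $\Qlb_{\bar X}[n]$ to be a simple perverse sheaf on $\bar X$, whence $\bar X$ is rationally smooth and $\Qlb_{\bar X}\simeq\IC(\bar X,\Qlb)$. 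It then remains to see that $\nu$ is bijective, i.e.\ that $X$ is unibranch: once this is known, $\nu$ is a universal homeomorphism, $\nu_*\Qlb_{\bar X}\simeq \Qlb_X$, and therefore $\IC(X,\Qlb)\simeq\Qlb_X$.

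The main obstacle is precisely this last point. All the available invariants of $\IC(X,\Qlb)$ that are cheap to control — the stalks $i_x^*\IC(X,\Qlb)$ (in degree $0$, since it is a sheaf), the costalks $i_x^!\IC(X,\Qlb)$ (in degree $2\dc\bar x$, since $\IC(X,\Qlb)$ is self-dual under $\D$ up to Tate twist), and the strict support/cosupport estimates built into $j_{!*}$ — are insensitive to the ranks of the stalks of $j_*\Qlb_U$ along $Z$, i.e.\ to the sizes of the fibers of $\nu$; so a purely ``perverse'' argument does not obviously close the gap, and it is here that unibranchness of $X$ must be invoked. Fortunately, in the situation for which this proposition is needed this costs nothing: the scheme $\tP$ to which it is applied is normal by construction (via its $S_2$-extension description, cf.\ Example~\ref{exam:normal}), so $\nu=\id$ and the second paragraph collapses immediately to $\IC(X,\Qlb)\simeq j_*\Qlb_U\simeq\Qlb_X$.
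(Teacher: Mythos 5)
Your first paragraph is, modulo packaging, exactly the first half of the paper's own argument: the paper forms the distinguished triangle $\IC(X,\Qlb)[\dim X] \to Rj_*\Qlb[\dim X] \to F$ with $F$ supported on $Z$ and perverse there in nonnegative degrees, hence $F \in \std D(X)^{\ge -\dim Z}$, and since $\dc Z \ge 2$ deduces $H^0(\IC(X,\Qlb)) \simeq H^0(Rj_*\Qlb) = j_*\Qlb$; together with the hypothesis that $\IC(X,\Qlb)$ is a sheaf this gives $\IC(X,\Qlb) \simeq j_*\Qlb$, which is what your appeal to Deligne's construction yields. So up to that point the two proofs agree in substance.

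Where you stop is precisely where the paper is very quick: its proof ends by asserting $j_*\Qlb \simeq \Qlb$ ``because $X$ is assumed to be irreducible.'' As you observe, this is really a statement about branches, not irreducibility: the stalk of the \'etale sheaf $j_*\Qlb$ at a geometric point $\bar x$ of $Z$ has rank equal to the number of connected components of $U \times_X \Spec \cO^{sh}_{X,\bar x}$, and global irreducibility of $X$ does not force this number to be $1$. Concretely, if $X$ is the irreducible surface obtained by gluing two closed points of $\mathbb{A}^2$, with pinch point $x_0$ and normalization $\nu: \mathbb{A}^2 \to X$, then a direct check of the support and cosupport conditions at $x_0$ shows $\IC(X,\Qlb) \simeq \nu_*\Qlb$: this is a sheaf, but its stalk at $x_0$ has rank $2$, so it is not $\Qlb$. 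Thus the missing ingredient you identify (geometric unibranchness of $X$, e.g.\ normality) is genuinely needed for the statement and is not supplied by the paper's proof either; it is, however, available wherever the proposition is used, since Proposition~\ref{prop:ratl-smooth} is invoked (through Proposition~\ref{prop:fin-ratl-smooth}) for $\tP$, which is normal by construction, exactly as you say. With that hypothesis added, your normalization detour ($j_*\Qlb \simeq \nu_*\Qlb_{\bar X}$, rational smoothness of $\bar X$ via $t$-exactness and faithfulness of $\nu_*$, then $\nu$ a universal homeomorphism) is a complete and correct proof, and in fact slightly sharper than the paper's one-line finish, since it isolates where unibranchness enters; compare also Proposition~\ref{prop:normfiber}, where the paper itself measures the fibers of $\nu$ by the rank of $H^0_x(\IC(X,\Qlb))$.
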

\begin{proof}
Recall that there is a distinguished triangle
\[
\IC(X,\Qlb)[\dim X] \to Rj_*\Qlb[\dim X] \to F \to \IC(X,\Qlb)[\dim X+1],
\]
where $F$ is supported on $Z$, and $F|_Z$ lies in $D(Z)^{\ge 0}$.  In
particular, this implies that $F \in \std D(X)^{\ge - \dim Z}$.
Taking the long exact sequence cohomology sequence associated to the
above distinguished triangle, we see that $H^k(\IC(X,\Qlb)[\dim X]) \simeq
H^k(Rj_*\Qlb[\dim X])$ for all $k < -\dim Z$.  If we take $k = -\dim X$, we find that
$H^0(\IC(X,\Qlb)) \simeq H^0(Rj_*\Qlb)$.

Since $\IC(X,\Qlb)$ is assumed to be a sheaf, we have
$H^0(\IC(X,\Qlb)) \simeq \IC(X,\Qlb)$.  On the other hand, we have
$H^0(Rj_*\Qlb) \simeq j_*\Qlb \simeq \Qlb$, where the last isomorphism
holds because $X$ is assumed to be irreducible.
\end{proof}

\begin{prop}\label{prop:fin-ratl-smooth}
  Let $f: Y \to X$ be a finite morphism of irreducible varieties.  Let
  $V = f^{-1}(U)$, and assume that $f_*(\Qlb|_V)$ is a local system on
  $U$.  If $\IC(X, f_*(\Qlb|_V))$ is a sheaf, then $Y$ is rationally
  smooth.
\end{prop}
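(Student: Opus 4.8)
The plan is to deduce the rational smoothness of $Y$ from the hypothesis on $X$ by pushing everything forward along $f$ and then invoking Proposition~\ref{prop:ratl-smooth} for $Y$ itself. Since $f$ is finite, it is at once affine and proper, so $f_*=Rf_*=f_!$; in particular $f_*$ commutes with Verdier duality, is exact for the standard $t$-structure, is conservative (a nonzero constructible sheaf on $Y$ has nonzero pushforward, its stalks being assembled from the stalks along the finite fibers of $f$), and is exact for the perverse $t$-structure as well (a standard fact for finite morphisms, being simultaneously affine and quasi-finite), hence restricts to an exact functor $M(Y)\to M(X)$. We may as well work non-equivariantly, since $\IC$ and all the relevant functors commute with the forgetful functor. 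Two small reductions: first, $f(Y)$ is closed and irreducible, and the hypothesis that $f_*(\Qlb|_V)$ --- the pushforward of the constant sheaf along the finite map $f|_V\colon V\to U$ --- be a local system on \emph{all} of $U$ forces either $V=f^{-1}(U)=\emptyset$, in which case that local system vanishes and the statement is vacuous, or $f(Y)\supseteq U$ and hence $f(Y)=X$; so we may assume $f$ is surjective, $\dim Y=\dim X$, and $V$ is dense open in $Y$. Second, for $f_*(\Qlb|_V)$ to be a local system on the smooth variety $U$ the finite morphism $f|_V$ must be \'etale, so $V$ is smooth and $\IC(Y,\Qlb)$ is defined relative to the open immersion $j^Y\colon V\hookrightarrow Y$.

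The heart of the matter is the identity
\[
f_*\,\IC(Y,\Qlb)\;\simeq\;\IC\bigl(X,\,f_*(\Qlb|_V)\bigr).
\]
Write $j^X\colon U\hookrightarrow X$ for the other open immersion. Since $f$ and $f|_V$ are finite, the relation $f\circ j^Y=j^X\circ(f|_V)$ yields natural isomorphisms $f_*\circ j^Y_!\simeq j^X_!\circ(f|_V)_*$ and $f_*\circ Rj^Y_*\simeq Rj^X_*\circ(f|_V)_*$, compatible with the canonical morphism $j_!\to Rj_*$. Because $f_*$ is perverse exact it commutes with $\pH^0$ and carries the image of a morphism in $M(Y)$ to the image of its pushforward in $M(X)$; recalling that for $N\in M(V)$ the intermediate extension $j^Y_{!*}N$ is the image of $\pH^0(j^Y_!N)\to\pH^0(Rj^Y_*N)$, and using $\dim Y=\dim X$ so that the normalization shifts agree, we get
\[
f_*\bigl(j^Y_{!*}(\Qlb|_V[\dim Y])\bigr)\;\simeq\;j^X_{!*}\bigl((f|_V)_*(\Qlb|_V)[\dim X]\bigr).
\]
The inner sheaf on the right is exactly the local system $f_*(\Qlb|_V)$ of the hypothesis, so the right-hand side is $\IC(X,f_*(\Qlb|_V))[\dim X]$, while the left-hand side is $f_*\IC(Y,\Qlb)[\dim Y]$; desuspending gives the displayed identity.

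Granting this, the conclusion is immediate. By hypothesis $\IC(X,f_*(\Qlb|_V))$ is a sheaf, i.e.\ concentrated in standard degree $0$, so the same holds for $f_*\IC(Y,\Qlb)$; since $f_*$ is exact for the standard $t$-structure and conservative, $H^k(\IC(Y,\Qlb))=0$ for all $k\neq0$, so $\IC(Y,\Qlb)$ is a sheaf on the irreducible variety $Y$. Proposition~\ref{prop:ratl-smooth} then gives $\IC(Y,\Qlb)\simeq\Qlb$, which is precisely the assertion that $Y$ is rationally smooth.

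The step I expect to require the most care is the identity $f_*\IC(Y,\Qlb)\simeq\IC(X,f_*(\Qlb|_V))$: one must use that $f_*$ is genuinely perverse exact --- which is where finiteness, rather than mere properness or affineness, is needed --- verify that the base-change isomorphisms for $j_!$ and $Rj_*$ are compatible with the natural map $j_!\to Rj_*$ so that images really do transport under $f_*$, and make sure the normalization shift $[\dim Y]=[\dim X]$ is legitimate, which is why one disposes of the surjectivity of $f$ at the start. After that, the conservativity of a finite pushforward closes the argument with no further work.
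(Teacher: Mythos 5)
Your proof is correct and follows essentially the same route as the paper's: push $\IC(Y,\Qlb)$ forward along the finite morphism $f$, identify it with $\IC(X,f_*(\Qlb|_V))$ using perverse $t$-exactness of $f_*$, and then use conservativity of $f_*$ together with Proposition~\ref{prop:ratl-smooth} to conclude. The only difference is one of exposition: where the paper obtains the identity $f_*\IC(Y,\Qlb)\simeq\IC(X,f_*(\Qlb|_V))$ directly by citing $t$-exactness from \cite[Corollaire~2.2.6]{bbd}, you derive it by hand from the base-change isomorphisms for $j_!$ and $Rj_*$ and the image description of $j_{!*}$.
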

\begin{proof}
Since $f$ is finite (and hence affine and proper), $f_*$ is exact and
$t$-exact by~\cite[Corollaire~2.2.6]{bbd}, and in particular,
$f_*\IC(Y,\Qlb)$ is an intersection cohomology complex on $X$, namely,
it is $\IC(X, f_*(\Qlb|_V))$.  This complex is, by assumption, actually a
sheaf.  Now, $f_*$ kills no nonzero sheaf, so the fact that $f_*\IC(Y,
\Qlb)$ is a sheaf implies that $\IC(Y,\Qlb)$ itself is a sheaf.  By
Proposition~\ref{prop:ratl-smooth}, $Y$ is rationally smooth.
\end{proof}

Since the morphism obtained by $S_2$-extension of a finite morphism is
also finite, the same argument as above gives us the following result
relating intersection cohomology complexes on a scheme obtained by
$S_2$-extension with those on the original scheme.  This fact will be a
vital step in the calculations of Section~\ref{sect:tP}, as anticipated by
Lusztig in his original formulation of
Conjecture~\ref{conj:lusztig}~\cite[\S 0.4]{lus:notes}.

\begin{prop}\label{prop:S2-IC}
Let $\rho: \tX \to X$ be the $S_2$-extension of a finite morphism $\rho_1:
\tU \to U \subset X$.  Let $E$ be a local system on $\tU$, and assume that
$\rho_*E$ is a local system on $U$.  Then we have $\rho_*\IC(\tX, E) \simeq
\IC(X, \rho_*E)$. \qed
\end{prop}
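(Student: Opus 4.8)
The plan is to run, essentially word for word, the argument used for Proposition~\ref{prop:fin-ratl-smooth}, with the local system $E$ in place of $\Qlb$. First I would extract the needed geometry from Theorem~\ref{thm:S2-ext}: the $S_2$-extension morphism $\rho\colon\tX\to X$ is finite, $\rho^{-1}(U)\simeq\tU$, and $\rho|_{\tU}=\rho_1$, with $\tU$ dense in $\tX$. Finiteness of $\rho$ together with density of $\tU$ gives $\dim\tX=\dim\tU=\dim U=\dim X$, while the identification of $\rho$ over $U$ with $\rho_1$ gives that the restriction to $U$ of the pushforward of $E$ is $\rho_{1*}E$ (this is the sheaf denoted $\rho_*E$ in the statement); by hypothesis it is a local system on $U$, so $\IC(X,\rho_*E)$ is meaningful in the sense of Section~\ref{sect:icc}.

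The substantive step is that a finite morphism is affine and proper, so by \cite[Corollaire~2.2.6]{bbd} the pushforward $\rho_*$ is exact and $t$-exact for the perverse $t$-structures; in particular it commutes with the perverse cohomology functors $\pH^n$ and with the formation of images. Writing $\tj\colon\tU\hookrightarrow\tX$, present $\IC(\tX,E)[\dim\tX]$ as the image in $M(\tX)$ of the canonical morphism $\pH^0(\tj_!(E[\dim\tX]))\to\pH^0(R\tj_*(E[\dim\tX]))$. Applying $\rho_*$ and using $\rho\circ\tj=j\circ\rho_1$ together with properness of $\rho$ and $\rho_1$ (so that $\rho_*\tj_!=(\rho\tj)_!=(j\rho_1)_!=j_!\rho_{1*}$, and dually $\rho_*R\tj_*=Rj_*\rho_{1*}$), one obtains that $\rho_*\IC(\tX,E)[\dim X]$ is the image of $\pH^0(j_!(\rho_{1*}E[\dim X]))\to\pH^0(Rj_*(\rho_{1*}E[\dim X]))$, i.e.\ the intermediate extension of $\rho_{1*}E[\dim X]$. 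By the definition of $\IC(X,\cdot)$ this object is $\IC(X,\rho_*E)[\dim X]$, and desuspending by $[\dim X]$ yields $\rho_*\IC(\tX,E)\simeq\IC(X,\rho_*E)$.

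I do not expect a genuine obstacle here: the only real input is that finite pushforward is $t$-exact and carries intermediate extensions to intermediate extensions, which is \cite[Corollaire~2.2.6]{bbd} and was already used to prove Proposition~\ref{prop:fin-ratl-smooth}. The two points that merit a line of care are the dimension count $\dim\tX=\dim X$ (which uses finiteness of $\rho$ and density of $\tU$) and the identification of $\rho_*E|_U$ with $\rho_{1*}E$, which, combined with the hypothesis that $\rho_*E$ is a local system, is exactly what makes the right-hand side $\IC(X,\rho_*E)$ a bona fide intersection cohomology complex in the notation of Section~\ref{sect:icc}.
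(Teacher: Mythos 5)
Your argument is correct and is essentially the paper's own approach: the paper proves this by observing that $\rho$ is finite and then invoking verbatim the argument of Proposition~\ref{prop:fin-ratl-smooth}, namely that finite pushforward is $t$-exact by \cite[Corollaire~2.2.6]{bbd} and hence carries intersection cohomology complexes to intersection cohomology complexes. Your unwinding of this via the image presentation $\operatorname{Im}(\pH^0 \tj_! \to \pH^0 R\tj_*)$ and the identities $\rho_*\tj_! = j_!\rho_{1*}$, $\rho_* R\tj_* = Rj_*\rho_{1*}$ is just the standard proof of that fact spelled out, so there is no genuine difference.
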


We close this section with the following result expressing the size of
fibers of the normalization map in terms of intersection cohomology.
\begin{prop}\label{prop:normfiber} Let $X$ be an irreducible variety with
  rationally smooth normalization $\bar{X}$, and let
  $\nu:\bar{X}\to X$ be the normalization morphism.  Then for any
  $x\in X$, $|\nu^{-1}(x)|=\dim H^0_x(\IC(X,\Qlb))$.  If $X$ is also
  rationally smooth, then $X$ is unibranch.
\end{prop}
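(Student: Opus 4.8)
The plan is to identify $\IC(X,\Qlb)$ with the ordinary pushforward $\nu_*\Qlb$ and then read the fibre cardinalities off its stalks. First I would record that the normalization morphism $\nu:\bar X\to X$ is finite, so by \cite[Corollaire~2.2.6]{bbd} the functor $\nu_*$ is exact and $t$-exact for the perverse $t$-structure, and $R\nu_*\Qlb=\nu_*\Qlb$ is concentrated in degree $0$. Moreover $\nu$ restricts to an isomorphism over the smooth locus $U$ of $X$, which is open and dense; writing $V=\nu^{-1}(U)$, we then have $\nu_*(\Qlb|_V)\simeq\Qlb|_U$, the trivial rank-one local system. Running the argument from the proof of Proposition~\ref{prop:fin-ratl-smooth}, $t$-exactness of $\nu_*$ forces $\nu_*\IC(\bar X,\Qlb)$ to be the intersection cohomology complex on $X$ extending $\Qlb|_U$, i.e.\ $\nu_*\IC(\bar X,\Qlb)\simeq\IC(X,\Qlb)$. (Equivalently, this is an instance of Proposition~\ref{prop:S2-IC}: by Example~\ref{exam:normal}, $\nu$ is the $S_2$-extension of $\id:U\to U$.) Since $\bar X$ is rationally smooth, $\IC(\bar X,\Qlb)\simeq\Qlb$, and hence $\IC(X,\Qlb)\simeq\nu_*\Qlb$.

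It remains to compute stalks. Because $\nu$ is finite, for every $x\in X$ the stalk $(\nu_*\Qlb)_x$ is $\bigoplus_{y\in\nu^{-1}(x)}\Qlb$, a $\Qlb$-vector space of dimension $|\nu^{-1}(x)|$; and since $\nu_*\Qlb$ is a sheaf in degree $0$ we get $H^0_x(\IC(X,\Qlb))\simeq(\nu_*\Qlb)_x$, of dimension $|\nu^{-1}(x)|$. This is the first assertion. For the second, if $X$ is itself rationally smooth then $\IC(X,\Qlb)\simeq\Qlb$, so $H^0_x(\IC(X,\Qlb))$ is one-dimensional for every $x$; hence $|\nu^{-1}(x)|=1$ for all $x\in X$, which says precisely that $X$ is unibranch. (In particular $\nu$ is then a bijection, and, being finite and therefore closed, a homeomorphism.)

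I do not anticipate a real obstacle: the substance is entirely the $t$-exactness of finite pushforward, exploited exactly as in Propositions~\ref{prop:fin-ratl-smooth} and~\ref{prop:S2-IC}. The two points that deserve a moment's care are (a) checking that $\nu_*(\Qlb|_V)$ really is a local system on the open dense $U$, so that the constructible functor $\IC(X,\cdot)$ applies --- which holds because $\nu$ is an isomorphism over the smooth locus --- and (b) the routine identification of the stalk of a finite pushforward with the direct sum of the stalks over the fibre, the only inputs being that $\nu$ has finite fibres and that $R^i\nu_*=0$ for $i>0$.
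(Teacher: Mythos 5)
Your argument is correct and follows essentially the same route as the paper: both use that the finite birational morphism $\nu$ is exact and $t$-exact to identify $\nu_*\Qlb \simeq \nu_*\IC(\bar X,\Qlb) \simeq \IC(X,\Qlb)$, then read off the fibre size from the stalk and conclude unibranchness in the rationally smooth case. The two small cautionary points you flag, the local-system check and the stalk-of-a-finite-pushforward identification, are handled as routine in the paper as well.
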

\begin{proof} Since $\nu$ is a finite morphism, it is exact and
  $t$-exact.  This and the fact that $\nu$ is birational imply that
  $\nu_*\Qlb\simeq \nu_*\IC(\bar{X},\Qlb)\simeq
  \IC(X,\nu_*\Qlb)\simeq \IC(X,\Qlb)$.  Taking stalks at $x$ gives
  $\nu_*(\Qlb|_{\nu^{-1}(x)})\simeq \IC(X,\Qlb)_x$, and hence,
  $H^0_x(\IC(X,\Qlb))\simeq \Qlb^{|\nu^{-1}(x)|}$.  The formula for the
  fiber size follows by taking dimensions.  Finally, if $X$ is
  rationally smooth, then $\IC(X,\Qlb)\simeq \Qlb$, so
  $|\nu^{-1}(x)|=1$.
\end{proof}

\begin{rmk} This proposition is known, but we have provided a proof for lack
  of a suitable reference.  The statement (without the assumption that
  $\bar{X}$ is rationally smooth) is given without proof in
  \cite[5E]{bs:green}.
\end{rmk}

\section{The Geometry of $\tP$}
\label{sect:tP}

In this section, we prove Theorem~\ref{thm:main}.  The field $k$ is
now assumed to be algebraically closed of good characteristic for the
group $G$.  We begin by observing that $\cIC(P, \rho_{1*}\cO_{\tC_1})$
is defined.  Indeed, $\tC_1$ is open dense in $P$, and its complement
has codimension at least 2.  Also, the stalk of $\rho_{1*}\cO_{\tC_1}$
at $x\in C_1$ is just the direct sum of $|F|$ copies of $\cO_{C_1,x}$.
Since $C_1$ is smooth, $\rho_{1*}\cO_{\tC_1}$ certainly satisfies the
condition of Lemma~\ref{lem:F-IC}.  We may thus define $\tP$ by
$S_2$-extension:
\[
\tP = \bSpec \cIC(P, \rho_{1*}\cO_{\tC_1}).
\]
Now, $\tC_1$ is regular and $S_2$ (because it is smooth), and its
complement in $\tP$ (which has codimension at least $2$) is $S_2$.  By
Serre's criterion, $\tP$ is normal.  The first part of
Theorem~\ref{thm:main} is then immediate from
Theorem~\ref{thm:S2-ext}.

The remainder of Theorem~\ref{thm:main} is given by
Propositions~\ref{prop:tP-Gorenstein}--\ref{prop:tP-strata} below.

\begin{prop}\label{prop:tP-Gorenstein} \begin{enumerate} \item The variety $\tP$ is endowed
    with natural actions of $F$ and $G$, and these actions commute.
    If we regard $F$ as acting trivially on $P$, then $\rho$ is both
    $G$ and $F$-equivariant.  
\item The variety $\tP$ is rationally
    smooth.  Moreover, if $\operatorname{char} k=0$, then $\tP$ is
    Gorenstein.
  \end{enumerate}
\end{prop}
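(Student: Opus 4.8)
The plan is to build both group actions directly on the sheaf $\cIC(P,\rho_{1*}\cO_{\tC_1})$ and then take global $\bSpec$, exploiting functoriality. For the $G$-action, $P$ is already a $G$-variety and $\tC_1 = G/G^x_F$ carries a $G$-action making $\rho_1$ equivariant, so $\rho_{1*}\cO_{\tC_1}$ is a $G$-equivariant sheaf of algebras on $C_1$; since the whole $\cIC$ formalism has been set up equivariantly, $\cIC(P,\rho_{1*}\cO_{\tC_1})$ is a $G$-equivariant sheaf of algebras and $\tP = \bSpec$ of it is naturally a $G$-scheme with $\rho$ equivariant. For the $F$-action, first observe that $F$ acts on $\tC_1$ on the right (via the description $\tC_1 = (\tC_1)^\circ \times_{N_F(1)} F$, or more simply because $\rho_1$ is the quotient of $\tC_1$ by $F$), commuting with the left $G$-action, and this action is over $C_1$, i.e.\ $\rho_1 \circ a = \rho_1$ for each $a \in F$. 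Hence each $a \in F$ induces an automorphism of the sheaf of algebras $\rho_{1*}\cO_{\tC_1}$ on $C_1$ covering the identity of $C_1$. Because $\cIC(P,\cdot)$ is a functor (indeed an equivalence of categories onto its image, by Proposition~\ref{prop:interm-ext}), this extends uniquely to an action of $F$ by algebra automorphisms on $\cIC(P,\rho_{1*}\cO_{\tC_1})$ covering $\id_P$, which is moreover $G$-equivariant since $\cIC$ is a functor of $G$-equivariant sheaves and the two actions commute already on $C_1$. Applying $\bSpec$ turns this into an $F$-action on $\tP$ commuting with the $G$-action, with $\rho$ visibly $F$-invariant (so $F$-equivariant for the trivial $F$-action on $P$). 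The main thing to check carefully here is that uniqueness in the functoriality of $\cIC$ forces the cocycle/associativity identities for $F$ to hold on $\tP$, not just on $\tC_1$; this is routine given the equivalence-of-categories statement, but it is where the argument must be spelled out.

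For the second part, rational smoothness of $\tP$ follows from Proposition~\ref{prop:fin-ratl-smooth} (or directly Proposition~\ref{prop:ratl-smooth}). Since $\rho: \tP \to P$ is finite and $\rho_*(\Qlb|_{\tC_1})$ is a local system on $C_1$ (it is $\bigoplus_{|F|}\Qlb$, as $\tC_1 \to C_1$ is a finite \'etale $F$-cover), and $P$ is rationally smooth by Lusztig's theorem (the 1981 conjecture, now verified in all types), we have $\IC(P,\rho_*(\Qlb|_{\tC_1})) = \IC(P,\Qlb)^{\oplus|F|}\simeq \Qlb^{\oplus|F|}$, which is a sheaf; Proposition~\ref{prop:fin-ratl-smooth} then gives that $\tP$ is rationally smooth. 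One should note that rational smoothness of $P$ itself is an input (a known theorem, not reproved here), and that one must first know $P$ irreducible/equidimensional so that $\IC(P,\Qlb)$ makes sense as stated — this holds since $P = \overline{C_1} \setminus (\text{lower special closures})$ is locally closed and irreducible of the dimension of $C_1$.

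For the Gorenstein statement in characteristic $0$, the strategy is to invoke Corollary~\ref{cor:S2-Gor}\eqref{it:S2-Gor}: it suffices to show $\rho_{1*}\cO_{\tC_1}$ is isomorphic to its own Serre--Grothendieck dual $\D(\rho_{1*}\cO_{\tC_1})$ as a sheaf on $C_1$, and that $P$ is Gorenstein (so that the hypothesis ``$X$ Gorenstein'' of the corollary is met). Self-duality of $\rho_{1*}\cO_{\tC_1}$: since $C_1$ is smooth, $\D$ on $C_1$ is (up to the normalizing shift) $\cHom(\cdot,\cO_{C_1})$, and $\rho_1$ being finite \'etale makes $\rho_{1*}\cO_{\tC_1}$ a locally free $\cO_{C_1}$-module equipped with a nondegenerate trace pairing $(\rho_{1*}\cO_{\tC_1})\otimes(\rho_{1*}\cO_{\tC_1})\to\cO_{C_1}$ (the trace form of an \'etale algebra is nondegenerate), yielding $\rho_{1*}\cO_{\tC_1}\simeq\cHom(\rho_{1*}\cO_{\tC_1},\cO_{C_1})\simeq\D(\rho_{1*}\cO_{\tC_1})$. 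That $P$ is Gorenstein in characteristic $0$ is where the theorem of Hinich and Panyushev enters: a (normal) variety whose intersection cohomology agrees with its structure sheaf (i.e.\ a rationally smooth variety that is also Cohen--Macaulay with self-dual dualizing sheaf) — more precisely, Hinich--Panyushev show the relevant unipotent-class-closure data force $\overline{C_1}$, and hence the open subvariety $P$, to be Gorenstein; one then concludes via Corollary~\ref{cor:S2-Gor}\eqref{it:S2-Gor} that $\tP$ is Gorenstein. I expect this last step — pinning down exactly why $P$ is Gorenstein in characteristic zero from Hinich--Panyushev (and why that argument is unavailable in positive characteristic) — to be the main obstacle, since it is the one genuinely external input and must be quoted with the correct hypotheses; everything else is formal manipulation of the $\cIC$ functor and the corollaries already proved.
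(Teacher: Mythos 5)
Your construction of the $F$- and $G$-actions via equivariant sheaves and functoriality of $\cIC$ matches the paper's approach (the paper packages this slightly more tersely by observing that $\rho_{1*}\cO_{\tC_1}$ is a $(G\times F)$-equivariant sheaf), and your verification of self-duality of $\rho_{1*}\cO_{\tC_1}$ via the trace form of the \'etale cover is correct and in fact more explicit than the paper's. However, there are two genuine gaps in the second part.

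First, for rational smoothness you write that $\rho_{1*}(\Qlb|_{\tC_1}) \simeq \Qlb^{\oplus|F|}$. This is false whenever $F \ne 1$: the cover $\rho_1 \colon \tC_1 = G/G^x_F \to C_1 = G/G^x$ is a \emph{connected} finite \'etale $F$-cover (one has $G^x/G^x_F \simeq F$), so $\rho_{1*}\Qlb$ is the local system $E$ on $C_1$ corresponding to the regular representation of $F$, which is nontrivial. Consequently $\IC(P, \rho_{1*}\Qlb) \ne \IC(P,\Qlb)^{\oplus|F|}$, and rational smoothness of $P$ (which only says $\IC(P,\Qlb)$ is a sheaf) does not give what you need. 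The argument requires the stronger input used in the paper, namely Lusztig's result \cite[Proposition~0.7(c)]{lus:notes} that $\IC(P,E)$ is a sheaf for \emph{every} $G$-equivariant local system $E$ on $C_1$; that is what makes Proposition~\ref{prop:fin-ratl-smooth} applicable.

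Second, for the Gorenstein statement you propose to invoke Corollary~\ref{cor:S2-Gor}\eqref{it:S2-Gor} with $X = P$, but the corollary requires $X$ to be Gorenstein, and $P$ is not known to be Gorenstein — indeed, at this stage of the paper it is not even known to be normal, and the Hinich--Panyushev theorem concerns the \emph{normalization} of a nilpotent orbit closure, not the closure itself. The paper sidesteps this by factoring $\rho$ through the normalization $\nu\colon \bar P \to P$, using Proposition~\ref{prop:Spec-IC} to identify $\tP$ with $\bSpec\, \cIC\bigl(\bar P, (\bar\rho|_{\tC_1})_*\cO_{\tC_1}\bigr)$, and then applying Corollary~\ref{cor:S2-Gor} with $X = \bar P$, which \emph{is} Gorenstein in characteristic $0$ by Hinich--Panyushev. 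Your proof should pass to $\bar P$ in this way rather than asserting that $P$ itself is Gorenstein.
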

\begin{proof}

  First, note that $\rho_{1*}\cO_{\tC_1}$ naturally has the structure
  of a $(G \times F)$-equivariant sheaf (where $F$ acts trivially on
  $P$), so $\cIC(P, \rho_{1*}\cO_{\tC_1})$ acquires one as well.
  Applying the $\bSpec$ construction to an equivariant sheaf produces
  a scheme carrying an group action and an equivariant morphism.

Next, the rational smoothness of $\tP$ follows from
Proposition~\ref{prop:fin-ratl-smooth}, together with the fact that
$\IC(P,E)$ is a sheaf for any equivariant local system $E$ on $C_1$. 
(See~\cite[Proposition~0.7(c)]{lus:notes}.)

Finally, observe that because $\tP$ is normal, the canonical morphism $\rho: \tP \to P$ factors
through the normalization $\bar P$ of $P$: 
\[
\xymatrix{
\tP \ar[r]^{\bar\rho} \ar@/_1pc/[rr]_{\tpi} & 
\bar P \ar[r]^{\nu} & P}
\]
By invoking Proposition~\ref{prop:Spec-IC}, we see that $\tP$ can also be
constructed as 
\[
\bSpec \cIC(\bar P, (\bar\rho|_{\tC_1})_*\cO_{\tC_1}).
\]
Now, $\bar P$ is Gorenstein in characteristic $0$ by the theorem of
Hinich--Panyushev~\cite{hinich, panyushev}, so $\tP$ is Gorenstein as
well by Corollary~\ref{cor:S2-Gor}.  (Note that $\bar P$ satisfies
condition~(Q), since it is normal and quasiaffine.)
\end{proof}

\begin{prop}\label{prop:unibranch}
Each special piece $P$ is unibranch.
\end{prop}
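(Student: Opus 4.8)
The plan is to deduce unibranchness of $P$ from the rational smoothness of $\tP$ together with Proposition~\ref{prop:normfiber}. We already know from Proposition~\ref{prop:tP-Gorenstein} that $\tP$ is rationally smooth, and that $\bar\rho: \tP \to \bar P$ is the algebraic quotient by the finite group $F$. Since $\tP$ is normal, it is its own normalization, so $\bar P$---being a finite quotient of the normal variety $\tP$---is also normal, and hence $\bar P$ is the normalization of $P$ via the map $\nu: \bar P \to P$. The first task is therefore to transfer rational smoothness from $\tP$ down to $\bar P$.

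First I would observe that $\bar\rho: \tP \to \bar P$ is a finite surjective morphism, so $\bar\rho_*$ is exact and $t$-exact. Because $\tC_1 = \tU$ is dense open in $\tP$ with $F$ acting freely there, $\bar\rho$ restricts over a dense open subset $W \subset \bar P$ to a finite étale cover of degree $|F|$; write $E = (\bar\rho|_{\bar\rho^{-1}(W)})_* \Qlb$, a local system on $W$ of rank $|F|$. By the argument of Proposition~\ref{prop:fin-ratl-smooth} applied to $\bar\rho$, we have $\bar\rho_* \IC(\tP, \Qlb) \simeq \IC(\bar P, E)$, and since $\tP$ is rationally smooth, $\IC(\tP,\Qlb) \simeq \Qlb$, so $\IC(\bar P, E)$ is a sheaf, namely $\bar\rho_* \Qlb$. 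The trivial local system $\Qlb$ on $W$ is a direct summand of $E$ (split off by the averaging idempotent, using that $F$ acts and $\mathrm{char}\, k$ is good so $|F|$ is invertible in $\Qlb$), hence $\IC(\bar P, \Qlb)$ is a direct summand of $\IC(\bar P, E)$ and is therefore also a sheaf. By Proposition~\ref{prop:ratl-smooth}, $\bar P$ is rationally smooth.

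Now I would apply Proposition~\ref{prop:normfiber} to $X = P$: its normalization $\bar P$ is rationally smooth, so for each $x \in P$ we have $|\nu^{-1}(x)| = \dim H^0_x(\IC(P,\Qlb))$. It remains to show that $\IC(P,\Qlb)$ is a sheaf with one-dimensional stalks, i.e., that $\IC(P,\Qlb) \simeq \Qlb$, i.e., that $P$ itself is rationally smooth---for then $|\nu^{-1}(x)| = 1$ for all $x$ and $P$ is unibranch by the last sentence of Proposition~\ref{prop:normfiber}. But $P$ rationally smooth is precisely Lusztig's 1981 conjecture, which (as recalled in the introduction) is \emph{known}: it follows from the explicit Green-function computations of Shoji and Beynon--Spaltenstein in the exceptional types and from Kraft--Procesi in the classical types. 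Invoking this, $\IC(P,\Qlb) \simeq \Qlb$, so $|\nu^{-1}(x)| = 1$ for all $x \in P$, and hence $\nu$ is a bijection; since $\nu$ is finite it is a homeomorphism, and $P$ is unibranch.

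The main obstacle is the splitting step: I need the trivial summand of $E = \bar\rho_*\Qlb$ to survive under the intermediate extension functor so as to conclude $\IC(\bar P,\Qlb)$ is a sheaf from $\IC(\bar P, E)$ being one. This is clean because $\IC(X,\cdot)$ is additive (indeed an equivalence onto its image), so it commutes with direct summands; one just needs $\Qlb$ to be a genuine equivariant-local-system summand of $E$ over the open locus, which holds because $F$ is finite of order invertible in $\Qlb$. An alternative that sidesteps even this is to argue directly that $P$ is rationally smooth by citing the verified Lusztig conjecture and then apply Proposition~\ref{prop:normfiber} with both $X$ and $\bar X$ rationally smooth; but it is worth recording the intermediate fact that $\bar P$ is rationally smooth as well, since it is needed in the sequel and follows with no extra work.
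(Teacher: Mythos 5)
Your proof is correct and follows essentially the same route the paper takes: its own proof simply cites Proposition~\ref{prop:normfiber}, the rational smoothness of $\tP$, and the rational smoothness of $P$. You have usefully made explicit the step the paper leaves implicit—namely that $\bar P$ is rationally smooth (the hypothesis on the normalization required by Proposition~\ref{prop:normfiber}), which you deduce from the rational smoothness of $\tP$ by pushing forward along the finite morphism $\bar\rho$ and splitting off the trivial summand of the regular local system $E$.
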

\begin{proof} This follows from Proposition~\ref{prop:normfiber}, the
  preceding result, and the fact that $P$ is rationally smooth.
\end{proof}

\begin{prop}\label{prop:tP-fibers}
The morphism $\bar\rho:\tP\to\bar P$ is the algebraic quotient of
the $F$-action while $\rho:\tP\to P$ is the topological quotient.  In
particular, $F$ acts transitively on the fibers of $\rho$.
\end{prop}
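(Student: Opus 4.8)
The plan is to identify $\bar\rho\colon\tP\to\bar P$ with the algebraic quotient of $\tP$ by $F$ via an explicit computation of the sheaf of $F$-invariant functions, and then to read off the statement about $\rho$ from the fact that $\nu$ is a homeomorphism (Proposition~\ref{prop:unibranch}). First I would record the equivariance: $\rho_{1*}\cO_{\tC_1}$ is naturally $(G\times F)$-equivariant with $F$ acting trivially on $P$ (as in the proof of Proposition~\ref{prop:tP-Gorenstein}), so $\tP$ carries commuting $G$- and $F$-actions with $F$ trivial on $P$; since $\tP$ is normal, this $F$-action descends through the normalization $\bar P$ and is again trivial there (it is trivial on the dense open subscheme $\bar C_1$), so $\bar\rho$ is $F$-invariant. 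Because $\bar\rho$ is finite, hence affine, and $\bar P$ is a variety on which $F$ acts trivially, the algebraic quotient $\tP/\!/F$ exists and is computed as $\bSpec_{\bar P}\bigl((\bar\rho_*\cO_{\tP})^F\bigr)$.

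Next I would compute that invariant sheaf. Writing $\imath\colon\bar C_1\hookrightarrow\bar P$ for the open inclusion, whose complement has codimension $\ge 2$, Proposition~\ref{prop:Spec-IC} together with the description of $\tP$ recalled in the proof of Proposition~\ref{prop:tP-Gorenstein} gives $\bar\rho_*\cO_{\tP}\simeq\cIC^s\bigl(\bar P,(\bar\rho|_{\tC_1})_*\cO_{\tC_1}\bigr)$, which by Proposition~\ref{prop:S2-j*} equals $\imath_*\bigl((\bar\rho|_{\tC_1})_*\cO_{\tC_1}\bigr)$. Since both $\imath_*$ and the functor $(-)^F$ of $F$-invariants are computed on sections, they commute, so $(\bar\rho_*\cO_{\tP})^F\simeq\imath_*\bigl(((\bar\rho|_{\tC_1})_*\cO_{\tC_1})^F\bigr)$. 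Now $\nu$ restricts to an isomorphism $\bar C_1\simeq C_1$ over the smooth (hence normal) locus, and $\rho_1$ is the quotient of $\tC_1$ by the \emph{free} $F$-action, so $\bar\rho|_{\tC_1}\colon\tC_1\to\bar C_1$ is an $F$-torsor and $((\bar\rho|_{\tC_1})_*\cO_{\tC_1})^F\simeq\cO_{\bar C_1}$. Because $\bar P$ is normal and $\bar C_1$ has complement of codimension $\ge 2$, regular functions extend, i.e.\ $\imath_*\cO_{\bar C_1}\simeq\cO_{\bar P}$. Hence $(\bar\rho_*\cO_{\tP})^F\simeq\cO_{\bar P}$, so $\tP/\!/F=\bSpec_{\bar P}\cO_{\bar P}=\bar P$ with quotient morphism exactly $\bar\rho$.

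For the remaining assertions I would argue as follows. An algebraic quotient by a finite group is finite and surjective, hence closed, hence a topological quotient map, and its fibers are precisely the $F$-orbits; so $\bar\rho$ is a topological quotient and $F$ acts transitively on its fibers. By Proposition~\ref{prop:unibranch}, $\nu$ is finite and bijective, hence a homeomorphism, so $\rho=\nu\circ\bar\rho$ is again a topological quotient map, with the same fibers as $\bar\rho$ (identified via the bijection $\nu$); transitivity of the $F$-action on the fibers of $\rho$ follows.

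The step I expect to require the most care is the computation of the invariants: one must notice that it has to be carried out over the normalization $\bar P$ rather than over $P$ itself, since $P$ is not known to be normal and so $j_*\cO_{C_1}\ne\cO_P$ in general — it is only after passing to $\bar P$ that the invariant subsheaf of $\bar\rho_*\cO_{\tP}$ collapses to the structure sheaf. The ancillary facts — that $(-)^F$ commutes with $\imath_*$, that $\rho_1$ is an $F$-torsor, and that the algebraic quotient of $\tP$ by $F$ exists and is given by invariant functions (using that $\bar\rho$ is affine and $F$ acts trivially on $\bar P$) — are routine.
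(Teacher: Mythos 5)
Your argument is correct and is essentially the paper's: both proofs reduce, via unibranchness, to showing $\bar P \simeq \tP/F$ by computing the $F$-invariants of the sheaf of algebras defining $\tP$ and identifying them with the algebra defining $\bar P$, using that $\rho_1$ is the free quotient of $\tC_1$ by $F$ over the open piece. The only difference is bookkeeping: the paper commutes $(-)^F$ past $\cIC$ by noting that $\cIC$ is an equivalence of categories and hence preserves finite limits (working over $P$), while you pass through $\cIC^s \simeq \imath_*$ (Proposition~\ref{prop:S2-j*}) over $\bar P$ and invoke normality of $\bar P$ along a complement of codimension at least $2$ --- an equally valid justification of the same step.
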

\begin{proof}
Since $P$ is unibranch, $P$ is homeomorphic to $\bar P$, and it
suffices to show that $\bar P\simeq \tP/F$.

The functor $\IC$ is an equivalence of categories between appropriate
categories of sheaves on $C_1$ and $P$, and it accordingly preserves
finite limits.  In particular, it preserves $F$-fixed objects, so
$\cIC(P, \rho_{1*}\cO_{\tC_1})^F\simeq
\cIC(P,(\rho_{1*}\cO_{\tC_1})^F)\simeq \cIC(P,\cO_{C_1})$.  The result
now follows, since we have $\tP/F\simeq\bSpec \cIC(P, \rho_{1*}\cO_{\tC_1})^F$ and
$\bar P\simeq \bSpec \cIC(P,\cO_{C_1})$.
\end{proof}

\begin{prop}\label{prop:tP-strata}
For each parabolic subgroup $H \subset F$, we have $\rho^{-1}(C_H) \simeq
\tC_H$.
\end{prop}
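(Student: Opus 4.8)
The plan is to compute the fiber of $\rho$ over a point of $C_H$ as a finite set carrying commuting $G$- and $F$-actions, and then reassemble. By Proposition~\ref{prop:tP-fibers}, $\rho$ is the topological quotient by $F$ and $F$ permutes the (necessarily finite) fibers of $\rho$ transitively; since $\rho$ is $G$-equivariant and the $F$- and $G$-actions commute, $\rho^{-1}(C_H)$ is a $(G\times F)$-stable locally closed subvariety, and it is a union of $G$-orbits that $F$ transitively permutes. Because $C_H$ is a single $G$-orbit, $\rho^{-1}(C_H)$ is recovered from the fiber $\rho^{-1}(x)$ over a fixed $x\in C_H$ together with its $(G^x\times F)$-action: as $G$-varieties $\rho^{-1}(C_H)\simeq G\times^{G^x}\rho^{-1}(x)$, with the induced $F$-action. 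On the other hand, unwinding~\eqref{eqn:preim-strat} (using that $G^x\to N_F(H)/H$ is surjective, which follows from \cite[Theorem~2.1]{as} together with the fact that $F$ is a direct factor of $\bar A(C_1)$) identifies $\tC_H$ with $G\times^{G^x}(F/H)$, where $G^x$ acts on $F/H$ by right translation through the composite $G^x\to\bar A(C_H)\xrightarrow{\sim} N_{\bar A(C_1)}(H)/H\to N_F(H)/H$ of the canonical quotient map with the isomorphism of \cite{as}. So the proposition amounts to identifying $\rho^{-1}(x)$, as a $(G^x\times F)$-set, with $F/H$ equipped with that $G^x$-action.

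To identify the fiber I would first pin down the $F$-stabilizer $F_{\tilde x}$ of a point $\tilde x\in\rho^{-1}(x)$: since $F$ acts transitively, $\rho^{-1}(x)\simeq F/F_{\tilde x}$ as an $F$-set, and because the $G^x$-action commutes with that of $F$ it is given by a homomorphism $\psi:G^x\to N_F(F_{\tilde x})/F_{\tilde x}$ acting by right translation, with $G^{\tilde x}=\ker\psi$. Thus everything reduces to showing that $F_{\tilde x}$ is conjugate to $H$ and that $\psi$ is the canonical map above. The bridge to the known representation-theoretic data is Proposition~\ref{prop:S2-IC}: $\tP$ is rationally smooth (Proposition~\ref{prop:tP-Gorenstein}), so $\IC(\tP,\Qlb)\simeq\Qlb$ and hence $\rho_*\Qlb\simeq\IC(P,\rho_{1*}\Qlb)$. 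The local system $\rho_{1*}\Qlb$ on $C_1$ is the one attached to the $F$-cover $\rho_1$, and the restriction of $\IC(P,\rho_{1*}\Qlb)$ — equivalently of $\rho_*\Qlb$ — to the stratum $C_H$ is exactly the local system that Lusztig computed, and whose shape he predicted as a step toward this conjecture, in \cite[\S 0.4]{lus:notes}. Its monodromy identifies $\rho^{-1}(x)$ as a $\pi_1(C_H)$-set, hence as an $A(C_H)$-set, and Lusztig's description together with the compatibility of canonical quotients from \cite{as} yields precisely $F_{\tilde x}\sim H$ and $\psi$ equal to the canonical map. Granting this, $\rho^{-1}(x)\simeq F/H$ with the asserted action, so $\rho^{-1}(C_H)\simeq G\times^{G^x}(F/H)\simeq\tC_H$; in particular the closed points of $\rho^{-1}(C_H)$ are exactly those with $F$-stabilizer conjugate to $H$, which is the remaining content of Theorem~\ref{thm:main}(4).

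The main obstacle is precisely the fiber identification of the preceding paragraph: that the decomposition group of the finite cover $\rho$ over a point of $C_H$ is conjugate to $H$, and that the residual $G^x$-action on the fiber is the canonical-quotient map of \cite{as}. This is where all of the hard type-independent input is used — Lusztig's determination of $F$ and of the decomposition/cuspidal data of $\IC(P,\rho_{1*}\Qlb)$, and the canonical-quotient compatibilities of \cite{as} — with Proposition~\ref{prop:S2-IC} serving as the device that turns these statements about $P$ and $C_1$ into statements about the constructed scheme $\tP$. By contrast, the reduction to this fiber computation via Proposition~\ref{prop:tP-fibers}, and the final reassembly into the fiber-product form~\eqref{eqn:preim-strat}, are routine, as is the remark that $\rho^{-1}(C_H)$ is reduced, being a union of $G$-orbits and hence smooth.
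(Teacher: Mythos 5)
Your reduction of the problem to identifying the fiber $\rho^{-1}(x)$ as a $(G^x\times F)$-set, and the bridge to $\IC(P,\rho_{1*}\Qlb)|_{C_H}$ via Proposition~\ref{prop:S2-IC} and rational smoothness of $\tP$, all match the paper's line of attack (Lemma~\ref{lem:tP-strata-K}). The gap is in the assertion that ``Lusztig's description together with the compatibility of canonical quotients from~\cite{as} yields precisely $F_{\tilde x}\sim H$.'' What the monodromy / IC-sheaf data actually gives is weaker: writing $K_H$ for $F_{\tilde x}$, one obtains an isomorphism of $A(C_H)$-representations $\IC(P,E)|_{C_H}\simeq E^{K_H}\simeq E^{H}$ and the resulting identification of $A(C_H)$-quotients, from which one extracts $|K_H|=|H|$, $|L_H|=|N_F(H)|$, and the compatibility of the family $\{K_H\}$ with the conjugacy ordering of parabolic subgroups. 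This does \emph{not} by itself force $K_H$ to be conjugate to $H$: the paper's Remark~\ref{rmk:failure} exhibits a family $\{K_H\}$ in $W(B_2)$ satisfying all of these constraints with $K_{\langle t\rangle}$ not conjugate to $\langle t\rangle$. The paper closes this gap in two different ways: when $F$ is abelian (hence in all classical types), $|K_H|=|H|$ already forces $L_H=N_F(H)=F$ and one is done; when $F$ is nonabelian (so $F=\fS_n$ for $2\le n\le 5$, occurring only in the exceptional types), a separate hand verification (Lemma~\ref{lem:tP-strata-H}) shows that in those specific symmetric groups the numerical and ordering constraints do pin down $K_H$ up to conjugacy. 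Your proposal asserts the conclusion at exactly the point where this case analysis is required, so as written it is incomplete; everything else in the reduction and reassembly is sound.
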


We prove this proposition in two steps.  First, in
Lemma~\ref{lem:tP-strata-K}, we obtain a general description of the
varieties $\rho^{-1}(C_H)$ in terms of unknown $F$-stabilizers.  This
description will suffice to prove the proposition when $F$ is abelian,
and in particular, for the classical groups.  Then, in
Lemma~\ref{lem:tP-strata-H}, we show by case-by-case considerations
that the $F$-stabilizers of points in $\rho^{-1}(C_H)$ are in fact
conjugates of $H$ for the exceptional groups.

\begin{lem}\label{lem:tP-strata-K}
  Let $H$ be a parabolic subgroup of $F$.  Each connected component of
  $\rho^{-1}(C_H)$ is isomorphic to $(\tC_H)^\circ$.  Let $K_H$ be the
  stabilizer in $F$ of some closed point of $\rho^{-1}(C_H)$.  Then
  $|K_H| = |H|$, and there is a subgroup $L_H \subset N_F(K_H)$ such
  that $|L_H| = |N_F(H)|$ and $\rho^{-1}(C_H) \simeq (\tC_H)^\circ
  \times_{L_H} F$.  Moreover, if $H$ is conjugate to a subgroup of
  another parabolic $H'$, then $K_H$ is conjugate to a subgroup of
  $K_{H'}$.
\end{lem}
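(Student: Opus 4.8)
The plan is to reduce all four assertions to the structure of a single fibre $\Phi := \rho^{-1}(x)$, for a fixed point $x \in C_H$, viewed as a finite set with commuting actions of the $G$-stabilizer $G^x$ and of $F$. Since $\rho$ is finite and $G$-equivariant, $\rho^{-1}(C_H)$ is a $G$-stable locally closed subvariety all of whose $G$-orbits have dimension $\dim C_H$; hence it is a disjoint union $O_1 \sqcup \cdots \sqcup O_m$ of $G$-orbits, which are exactly its connected components. Because $F$ commutes with $G$ and acts trivially on $P$, it preserves $\rho^{-1}(C_H)$, permutes the $O_i$, and all points of a given $O_i$ share one and the same $F$-stabilizer; by Proposition~\ref{prop:tP-fibers} the $F$-action on $\Phi$, and hence on $\{O_1,\dots,O_m\}$, is transitive, so these stabilizers form a single $F$-conjugacy class. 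Let $K_H$ denote the $F$-stabilizer of the points of $O_1$. Identifying $\Phi \cong F/K_H$ as an $F$-set, the commuting $G^x$-action factors through $\mathrm{Aut}_F(F/K_H) = N_F(K_H)/K_H$, a group that acts \emph{freely} on $F/K_H$; thus the $G^x$-stabilizer of every point of $\Phi$ equals $\ker\bigl(G^x \to N_F(K_H)/K_H\bigr)$, the $G^x$-orbits on $\Phi$ are the traces of the $O_i$, and $O_i \cong G/\ker(G^x \to N_F(K_H)/K_H)$ for every $i$.

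Next I would identify this monodromy. By Proposition~\ref{prop:S2-j*} one has $\rho_*\cO_{\tP} \cong j_*(\rho_{1*}\cO_{\tC_1})$ for $j\colon C_1 \hookrightarrow P$, so $\Phi$ together with its $G^x$-action depends only on the germ at $x$ of the \'etale cover $\rho_1\colon \tC_1\to C_1$ (concretely this is the setting of Example~\ref{exam:norm-cl}, $\tC_1$ being smooth and $\bar P$ integral). The analysis of the local structure of special pieces in~\cite{as} --- in particular Theorem~2.1 there giving $\bar A(C_H)\cong N_{\bar A(C_1)}(H)/H$, together with the fact that $\rho_1$ is the cover of $C_1$ whose monodromy group is the direct factor $F$ of $\bar A(C_1)$ --- shows that this local monodromy is the composite $G^x \twoheadrightarrow \bar A(C_H) \cong N_{\bar A(C_1)}(H)/H \twoheadrightarrow N_F(H)/H$, and that each $G^x$-orbit on $\Phi$ is a torsor under the image $N_F(H)/H$. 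Hence $\ker(G^x\to N_F(K_H)/K_H) = \ker(G^x\to N_F(H)/H)$, which is exactly the subgroup $G^x_F$ used in Section~\ref{sect:intro} to define $(\tC_H)^\circ$; so $O_i\cong G/G^x_F = (\tC_H)^\circ$, which is part~(1). Feeding this back, $G^x\to N_F(K_H)/K_H$ has image $\cong N_F(H)/H$, giving an embedding $N_F(H)/H \hookrightarrow N_F(K_H)/K_H$; and writing $L_H$ for the setwise $F$-stabilizer of $O_1$, one gets $K_H \subseteq L_H \subseteq N_F(K_H)$ (the last inclusion because $L_H$ permutes points all having $F$-stabilizer $K_H$), $m = [F:L_H]$, with $L_H/K_H$ acting on $(\tC_H)^\circ$ as the deck group $N_F(H)/H$, and $\rho^{-1}(C_H) \cong O_1\times_{L_H} F \cong (\tC_H)^\circ\times_{L_H} F$. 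Counting $|\Phi|$ in two ways gives $[F:K_H] = m\,|N_F(H)/H| = [F:L_H]\,|N_F(H)/H|$, so $|L_H| = |K_H|\cdot|N_F(H)/H|$; therefore the order claims in parts~(2) and~(3) are both equivalent to the single assertion $|\Phi| = [F:H]$ (equivalently $m=[F:N_F(H)]$, or $|K_H|=|H|$).

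Proving $|\Phi| = [F:H]$ is the heart of the argument and the step I expect to be the main obstacle, since nothing assembled so far forces it (for instance a self-normalizing $K_H$ is not excluded a priori). I would deduce it from Lusztig's computation of intersection-cohomology stalks on special pieces. Since $\tP$ is rationally smooth (Proposition~\ref{prop:tP-Gorenstein}) we have $\IC(\tP,\Qlb)\cong\Qlb$, so Proposition~\ref{prop:S2-IC} gives $\rho_*\Qlb \cong \rho_*\IC(\tP,\Qlb)\cong\IC(P,L)$, where $L := \rho_{1*}(\Qlb|_{\tC_1})$ is the rank-$|F|$ local system on $C_1$ afforded by the regular representation of $F$; as $\rho$ is finite this yields $|\Phi| = \dim H^0_x(\IC(P,L))$. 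By~\cite[Proposition~0.7]{lus:notes} --- precisely the stalk/multiplicity information with which Lusztig characterized $F$ --- this dimension equals $[F:H]$ (the same count is also visible in the inductive slice structure of special pieces used in~\cite{as}; in any form it rests on the explicit local geometry rather than on formal properties of $\cIC$).

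Finally, for part~(4): if $H$ is $F$-conjugate to a subgroup of $H'$, then $C_{H'}\subseteq\overline{C_H}$, by compatibility of the correspondence of Section~\ref{sect:intro} between parabolic subgroups of $F$ and classes in $P$ with the closure order (see~\cite{as},~\cite{lus:notes}). The fixed-point locus $\tP^{K_H}$ is closed and $G$-stable (as $F$ and $G$ commute) and contains $O_1$, hence contains $\overline{O_1}$. Since $\rho$ is finite it is closed, so $\rho(\overline{O_1}) = \overline{C_H}\supseteq C_{H'}$; choosing $y'\in\overline{O_1}$ with $\rho(y')\in C_{H'}$, we get $y'\in\tP^{K_H}$, hence $K_H\subseteq\mathrm{Stab}_F(y')$, and $\mathrm{Stab}_F(y')$ is $F$-conjugate to $K_{H'}$. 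Therefore $K_H$ is conjugate to a subgroup of $K_{H'}$, as required.
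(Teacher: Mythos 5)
Your decomposition of the problem into the structure of the fiber $\Phi=\rho^{-1}(x)$ as an $F$-set with a commuting $G^x$-action is the same starting point as the paper, and your paragraph using Propositions~\ref{prop:tP-Gorenstein} and~\ref{prop:S2-IC} together with \cite[Proposition~0.7]{lus:notes} to get $|\Phi|=[F:H]$, hence $|K_H|=|H|$, is correct and is exactly the paper's mechanism. Your proof of the final monotonicity claim (via the closure $\overline{O_1}\subseteq\tP^{K_H}$) also matches the paper.

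The gap is in your second paragraph. You assert that \cite[Theorem~2.1]{as} ``together with the fact that $\rho_1$ is the cover of $C_1$ whose monodromy group is $F$'' shows that the monodromy of the cover $\rho^{-1}(C_H)\to C_H$ is the composite $G^x\twoheadrightarrow \bar A(C_H)\cong N_{\bar A(C_1)}(H)/H\twoheadrightarrow N_F(H)/H$ and that the $G^x$-orbits on $\Phi$ are $N_F(H)/H$-torsors. But \cite[Theorem~2.1]{as} is a purely group-theoretic isomorphism of canonical quotients; it says nothing about the fiber of the $S_2$-extension $\rho$ over $C_H$, which is determined by the local behavior of the cover at $C_H$ and must be computed, not read off from combinatorics of $\bar A$. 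Without this step, your bookkeeping only gives $|K_H|=|H|$: the identity $[L_H:K_H]=[F:K_H]/m$ is circular on its own, and neither $|L_H|=|N_F(H)|$ nor the isomorphism $O_i\cong(\tC_H)^\circ$ (equivalently $G^{\tilde x}=G^x_F$) follows from the dimension alone.

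The paper closes this by using \cite[Proposition~0.7]{lus:notes} for more than the dimension: it says the local system $\IC(P,E)|_{C_H}$ is the representation of $A''(C_H)=N_F(H)/H$ on $E^H$, viewed as an $A(C_H)$-module via the canonical surjection $A(C_H)\twoheadrightarrow\bar A(C_H)\twoheadrightarrow N_F(H)/H$. From the cover side one computes the same local system to be $E^{K_H}$ as an $A(C_H)$-module factoring through $A'(C_H)=L_H/K_H$ (the regular representation of $A'(C_H)$, summed $[F:L_H]$ times). Since both are faithful over the respective quotients and equal as $A(C_H)$-modules, one gets $A'(C_H)\simeq A''(C_H)$ not just as abstract groups but as $A(C_H)$-sets; this forces $G^{\tilde x}=G^x_F$, hence $O_i\cong(\tC_H)^\circ$, and then $|L_H|=|N_F(H)|$. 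You need this representation-theoretic matching, not just the stalk dimension, before the rest of your argument can be completed.
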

\begin{proof}
Let $E$ denote either the regular representation of $F$ or, by abuse of
notation, the corresponding local system on $C_1$.  We will calculate
$\IC(P,E)|_{C_H}$ in a way that reflects the structure of $\rho^{-1}(C_H)$
and then compare with the known calculations following~\cite{lus:notes} to
prove the result. 

Consider the commutative diagram
\[
\xymatrix{
\;\rho^{-1}(C_H)\; \ar@{^{(}->}[r]^-{\ti_H}\ar[d]_{\rho} & \tP \ar[d]^{\rho} \\
\;C_H\; \ar@{^{(}->}[r]_{i_H} & P}
\]
From Proposition~\ref{prop:S2-IC}, we have $\rho_*\IC(\tP,\Qlb) \simeq
\IC(P, \rho_*\Qlb|_{C_1})$. Moreover, because $\rho|_{\rho^{-1}(C_1)}$
is a principal $F$-bundle, $\rho_*\Qlb|_{C_1} = E$.  On the other
hand, $\IC(\tP,\Qlb) \simeq \Qlb$ as $\tP$ is rationally smooth, so we
have
\[
\rho_*\Qlb \simeq \IC(P,E),
\]
and hence $\IC(P,E)|_{C_H} \simeq (\rho_*\Qlb)|_{C_H}$.  Now, since $\rho$
is proper, we know that $\rho_*\Qlb|_{C_H} \simeq
\rho_*(\Qlb|_{\rho^{-1}(C_H)})$. We seek to
understand $\rho_*(\Qlb|_{\rho^{-1}(C_H)})$.

Choose a point $x \in C_H$ and a point $\tilde x \in \rho^{-1}(x)$.  Since
the map $\rho: \rho^{-1}(C_H) \to C_H$ is finite and $G$-equivariant, the
stabilizer in $G$ of $\tilde x$, which we denote $G^{\tilde x}$, must be a
finite-index subgroup of the stabilizer $G^x$ of $x$.  The connected
component of $\rho^{-1}(C_H)$ containing $\tilde x$, which will be
denoted $B$, must be isomorphic to the homogeneous space $G/G^{\tilde x}$.
Then, since $F$ acts transitively on the fiber $\rho^{-1}(x)$, and the
actions of $F$ and $G$ commute, it follows that every connected component
of $\rho^{-1}(C_H)$ is isomorphic to $G/G^{\tilde x}$.  Let $L_H$ be the
subgroup of $F$ that preserves $B$ (without necessarily fixing $\tilde
x$).  The preceding discussion shows that $\rho^{-1}(C_H)$ is isomorphic to
$B \times_{L_H} F$ (where $a \in F$ acts on a pair $(b,f) \in B
\times_{L_H} F$ by $a \cdot (b,f) = (b, fa^{-1})$).  In particular, the
number of connected components of $\rho^{-1}(C_H)$ is $[F : L_H]$.  

Let $K_H$ be the stabilizer in $F$ of $\tilde x$.  Since the actions of $F$
and $G$ commute, it follows that $K_H$ is also the $F$-stabilizer of every
other point in $B$.  This implies that $K_H$ is a normal subgroup of $L_H$.
 Now, the group $L_H/K_H$ acts simply transitively on $\rho^{-1}(x)
\cap B$, so this is the group of deck transformations of $B$ over $C_H$. 
Let $A'(C_H) = L_H/K_H$.  We also have $A'(C_H) \simeq G^x/G^{\tilde x}$,
which is the quotient of $A(C_H) \simeq G^x/(G^x)^\circ$ by $G^{\tilde
x}/(G^x)^\circ$. 

The local system $(\rho|_B)_*\Qlb$ on $C_H$ corresponds to the regular
representation of $A'(C_H)$, and the full local system
$\rho_*(\Qlb|_{\rho^{-1}(C_H)})$ is then clearly just the direct sum
of $[F:L_H]$ copies of the regular representation of $A'(C_H)$.  It is
easily checked that the action of $L_H/K_H$ on the space $E^{K_H}$ of
$K_H$-invariant vectors in $E$ is also the direct sum of $[F:L_H]$
copies of its regular representation.  Thus, $\IC(P,E)|_{C_H} \simeq
E^{K_H}$ as an $A(C_H)$-representation.  Set $Q\subset A(C_H)$ equal
to the kernel of this representation.  Since $E^{K_H}$ is a faithful
representation of $A'(C_H)$, we obtain $A'(C_H)\simeq A(C_H)/Q$ as
groups and left $A(C_H)$-spaces.  (In other words, if $A'(C_H)$ is
viewed as a quotient of $A(C_H)$, then $A'(C_H)=A(C_H)/Q$.)

On the other hand, according to~\cite[Proposition~0.7]{lus:notes},
$\IC(P,E)|_{C_H}$ is the representation of
$A''(C_H)\overset{\mathrm{def}}{=} N_F(H)/H$ on $E^H$.
Following~\cite{lus:notes, as}, this group is a direct factor of $\bar
A(C_H)$ and hence naturally a quotient of $A(C_H)$.  The same argument
now shows that $A''(C_H)\simeq A(C_H)/Q$ and hence $A'(C_H) \simeq
A''(C_H)$ as groups and $A(C_H)$-spaces.  Moreover, $E^H$ and
$E^{K_H}$ are isomorphic representations via this isomorphism.  In
particular, we have $|K_H| = |H|$, since $\dim E^{K_H} = [F:K_H]$ and
$\dim E^H = [F:H]$.  It follows immediately that $|L_H| = |N_F(H)|$.
(However, we cannot conclude that $K_H$ is conjugate to $H$; see
Remark~\ref{rmk:failure}.)

The fact that $A'(C_H) \simeq A''(C_H)$ as homogeneous spaces implies
that $G^{\tilde x}$ is precisely the kernel $G^x_F$ of the canonical
map $G^x \to N_F(H)/H$.  Thus, $B \simeq G^x/G^x_F = (\tC_H)^\circ$,
and $\rho^{-1}(C_H) \simeq (\tC_H)^\circ \times_{L_H} F$.

Finally, the points of $\tP$ fixed by $K_H$ form a closed subvariety.  If
we repeat the above argument with another parabolic $H'$ with $H \subset
H'$, so that $C_{H'} \subset \overline{C_H}$, we see that $K_H$ must be
contained in the $F$-stabilizers of points of $\overline B \cap
\rho^{-1}(C_{H'})$.  Every such stabilizer is conjugate to $K_{H'}$, so
$K_H$ is conjugate to a subgroup of $K_{H'}$.
\end{proof}

If $F$ is abelian, then $|L_H| = |N_F(H)|$ implies that both groups
are in fact equal to $F$.  Thus, $\rho^{-1}(C_H) \simeq (\tC_H)^\circ
\times_{N_F(H)} F=\tC_H$.  

It remains to identify the $K_H$'s and $L_H$'s for the exceptional
groups.  There, the only nontrivial groups $F$ that occur are
symmetric groups $\fS_n$ with $2 \le n \le 5$.  The following lemma
gives us the required information about the $K_H$'s.

\begin{lem}\label{lem:tP-strata-H}
Let $F = \fS_n$ with $2 \le n \le 5$.  Let $\{K_H\}$ be a collection of
subgroups of $F$, where $H$ ranges over the parabolic subgroups of $F$. 
Assume that $|K_H| = |H|$ and that $K_{H_1}$ is conjugate to a subgroup of
$K_{H_2}$ whenever $H_1$ is conjugate to a subgroup of $H_2$.  Then each
$K_H$ is conjugate to $H$.
\end{lem}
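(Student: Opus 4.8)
The plan is to argue by a finite case analysis on $n \in \{2,3,4,5\}$. Recall that, up to conjugacy, the parabolic subgroups of $F = \fS_n$ are the Young subgroups $H_\lambda = \fS_{\lambda_1} \times \cdots \times \fS_{\lambda_k}$ indexed by partitions $\lambda \vdash n$, each conjugacy class having a standard representative. A preliminary observation is that it suffices to verify the assertion for one $H$ in each conjugacy class: if $H' = gHg^{-1}$, then each of $H,H'$ is conjugate to a subgroup of the other, so the monotonicity hypothesis forces $K_H$ and $K_{H'}$ to be conjugate to subgroups of each other, hence (having equal order) conjugate. So we fix the standard $H_\lambda$ for each $\lambda \vdash n$ and must show $K_{H_\lambda}$ is conjugate to $H_\lambda$.

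The engine of the proof is a downward induction in the poset of partitions, seeded by the two largest parabolics. For $\lambda = (n)$ we have $|K_{(n)}| = n!$, so $K_{(n)} = F$. For $\lambda = (n-1,1)$, with $H_\lambda$ a point stabilizer $\fS_{n-1}$, write $K = K_{(n-1,1)}$; it has index $n$, so the action of $F$ on the coset space $F/K$ is a homomorphism $F \to \fS_n$ whose kernel is the largest normal subgroup of $F$ contained in $K$. Since $n \le 5$, the only normal subgroups of $F$ are $1$, $A_n$, and $F$, and no nontrivial one has order dividing $(n-1)!$; hence the kernel is trivial, the homomorphism is an automorphism, and---all automorphisms of $\fS_n$ being inner for $n \ne 6$---$K$ is conjugate to a point stabilizer. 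Thus $K_{(n-1,1)}$ is conjugate to $H_{(n-1,1)}$.

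For the inductive step we process the remaining $\lambda$ in an order compatible with refinement. When we reach $\lambda$, we pick a partition $\mu \succ \lambda$ with $K_{H_\mu}$ already identified; then $H_\lambda \subseteq H_\mu$ up to conjugacy, and monotonicity places $K_{H_\lambda}$, up to conjugacy, inside a copy of $H_\mu$, which for our range of $n$ is one of $\fS_3$, $\fS_4$, $\fS_5$, or $F$ itself. We then invoke the (easily checked) subgroup structure of these small groups: e.g.\ $\fS_4$ has two conjugacy classes of order-$2$ subgroups (transposition and double-transposition subgroups), three of order $4$ ($\Z/4$, the normal Klein four, the non-normal Klein fours), one of order $6$ ($\fS_3$); the order-$12$ subgroups of $\fS_5$ are, up to conjugacy, either isomorphic to $A_4$ or equal to the standard $\fS_3 \times \fS_2$; and so on. To single out the right class among equal-order subgroups we use the matching \emph{lower} constraint: choosing $\nu \prec \lambda$ with $K_{H_\nu}$ already known, $K_{H_\lambda}$ must contain a conjugate of $H_\nu$. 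For instance, for $\lambda = (2,2)$ in $\fS_4$: $K_{(2,2)}$ has order $4$ and contains a transposition (because $H_{(2,1,1)} \subset H_{(2,2)}$ and $K_{(2,1,1)}$ was already identified as a transposition subgroup); among order-$4$ subgroups of $\fS_4$ only the non-normal Klein fours contain a transposition, and these are mutually conjugate and equal to $H_{(2,2)}$ up to conjugacy. The same two-sided squeeze dispatches $(2,1,1)$ and $(3,1)$ for $n=4$; $(2,1^3)$, $(3,1^2)$, $(2,2,1)$, $(3,2)$ for $n=5$; and the trivial cases $n \le 3$. Each time the auxiliary fact needed is a one-line statement about subgroups of $\fS_3$, $\fS_4$, or $\fS_5$---their orders, whether they contain a transposition or a copy of $\fS_3$, the normalizer or centralizer of an order-$3$ element, or the coset-action trick above.

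I do not expect a serious obstacle; the argument is essentially bookkeeping, and the two things demanding care are: (i) checking that the partitions of $n$ can be traversed so that at each $\lambda$ both an already-treated $\mu \succ \lambda$ and an already-treated $\nu \prec \lambda$ are available---routine for $n \le 5$; and (ii) having a correct enumeration of the conjugacy classes of subgroups of $\fS_4$ and $\fS_5$ of the small orders that occur, together with the simple invariants that distinguish them. It is worth emphasizing that both hypotheses are genuinely used: the order condition alone does not pin down $K_H$ (there really are non-conjugate equal-order subgroups in $\fS_4$ and $\fS_5$), and it is exactly the two-sided monotonicity inherited from Lemma~\ref{lem:tP-strata-K} that forces the conclusion.
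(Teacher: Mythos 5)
Your argument is correct, and at bottom it is the same strategy as the paper's: a finite case check for $n \le 5$ in which the order condition $|K_H| = |H|$ and the two-sided conjugacy-containment hypothesis single out $K_H$ among the few conjugacy classes of subgroups of the relevant order in $\fS_3$, $\fS_4$, $\fS_5$. The execution differs in organization: you seed an induction over the partition poset with the coset-action/automorphism argument showing an index-$n$ subgroup of $\fS_n$ ($n \ne 6$) is a point stabilizer, whereas the paper argues directly with subgroups generated by elements of prescribed orders; both routes work, and yours arguably makes the bookkeeping more systematic. In one spot your version is the more careful one: for $H = \fS_2 \times \fS_2 \subset \fS_4$ you use the lower constraint (the already-identified transposition subgroup $K_{\fS_2}$ must embed) to exclude the cyclic group of order $4$ and the normal Klein four-group, which is exactly what is needed --- the paper's one-line justification there (that $\fS_2 \times \fS_2$ is a Sylow $2$-subgroup of $\fS_4$, so all order-$4$ subgroups are conjugate to it) is not right as stated, since the Sylow $2$-subgroups of $\fS_4$ have order $8$ and the three conjugacy classes of order-$4$ subgroups are not all conjugate. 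One small slip on your side: for $n = 4$ the normal subgroups of $\fS_4$ are $1$, the Klein four-group $V$, $A_4$, and $\fS_4$, not just $1$, $A_n$, and $F$; this does not damage your seed step, because $|V| = 4$ does not divide $|K_{(3,1)}| = 6$, so the core of $K_{(3,1)}$ is still trivial and the coset action is still faithful. With that correction, and the (easily checked) classification of order-$12$ subgroups of $\fS_5$ that you quote, your two-sided squeeze goes through exactly as described.
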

\begin{proof}
If $F = \fS_2$ there are no nontrivial cases of $H$ to consider.

If $F = \fS_3$, we must consider $H = \fS_2$.  It is clear that every
subgroup of $F$ of order $2$ is conjugate to $H$.

If $F = \fS_4$, then the nontrivial possibilities for $H$ are $\fS_2$,
$\fS_3$, and $\fS_2 \times \fS_2$.  The last one is a Sylow $2$-subgroup of
$F$, so every subgroup of order $4$ is conjugate to it.  Next, it is easy to
verify by hand calculation that every subgroup of $\fS_4$ generated by an
element of order $3$ and another of order $2$ either has more than $6$
elements or is conjugate to $\fS_3$.  Finally, if $H = \fS_2$, we now know
that $K_H$ must be conjugate to a subgroup of $\fS_3$, so $K_H$ is
conjugate to $\fS_2$ by the preceding paragraph. 

If $f = \fS_5$, there are five nontrivial parabolic subgroups up to
conjugacy.  Another hand calculation shows that any subgroup generated by
an element of order $4$ and another of order $2$ either has size different
from $24$ or is conjugate to $\fS_4$.  Next, if $H$ is any of $\fS_3$,
$\fS_2 \times \fS_2$, or $\fS_2$, then $K_H$ must be conjugate to a
subgroup of $\fS_4$, so by the previous paragraph, $K_H$ is conjugate to
$H$.  Finally, suppose $H = \fS_3 \times \fS_2$.  Then $K_H$ must contain a
subgroup conjugate to $\fS_3$.  Again, an easy calculation shows that every
subgroup generated by $\fS_3$ and an element of order $2$ either has size
different from $12$ or is conjugate to $\fS_3 \times \fS_2$. 
\end{proof}

\begin{rmk}\label{rmk:failure}
The above lemma does not hold in general for $F$ an finite Coxeter group. 
For example, suppose $F$ is the Weyl group of type $B_2$, generated by
simple reflections $s$ and $t$ with $(st)^4 = 1$.  The groups $\langle s
\rangle$ and $\langle t \rangle$ are representatives of the two conjugacy
classes of nontrivial parabolic subgroups.  If we set $K_{\langle s
\rangle} = \langle s \rangle$ and $K_{\langle t \rangle} = \langle (st)^2
\rangle$, the hypotheses of the lemma are satisfied, but evidently
$K_{\langle t \rangle}$ is not conjugate to $\langle t \rangle$.
\end{rmk}

We now know that $K_H$ is conjugate to $H$ in all cases.  Returning to the
setting of Lemma~\ref{lem:tP-strata-K}, we see that since $L_H \subset
N_F(K_H) \simeq N_F(H)$ and $|L_H| = |N_F(H)|$, we must in fact have $L_H =
N_F(K_H)$, so $\rho^{-1}(C_H) \simeq (\tC_H)^\circ \times_{N_F(H)} F =
\tC_H$.  The proof of Proposition~\ref{prop:tP-strata} is now complete, and
hence, so is the proof of Theorem~\ref{thm:main}. 

Finally, we observe that any smooth variety containing
$\tC_1$ as a dense open set together with a finite morphism to $P$
extending $\rho_1$ must  coincide with $\tP$.  In particular:
\begin{cor} If $\tchar k=0$ and $G$ is classical, $\tP$ is
  isomorphic to the smooth variety over $P$ constructed by Kraft--Procesi.
\end{cor}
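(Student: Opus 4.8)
The plan is to derive the corollary from the uniqueness statement of Proposition~\ref{prop:unique}, via the general observation stated just above it: \emph{any} smooth variety $\hat P$ containing a dense open subscheme isomorphic to $\tC_1$ and equipped with a finite morphism $\hat\rho\colon\hat P\to P$ restricting to $\rho_1$ on that subscheme must be isomorphic to $\tP$. So I would first prove this observation and then check that the Kraft--Procesi variety is one such $\hat P$.

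For the observation: a smooth variety is regular at every point, so $\depth\cO_x=\dim\cO_x\ge\min\{2,\dim\cO_x\}$ for all $x\in\hat P$, i.e., $\hat P$ is $S_2$ in the sense of Definition~\ref{defn:S2}, and in particular it is locally $S_2$ away from the copy of $\tC_1$. The complement of $\tC_1$ in $\hat P$ has codimension at least $2$: it maps finitely onto the complement of $C_1$ in $P$, which has codimension at least $2$ (as recalled at the start of Section~\ref{sect:tP}). Thus Proposition~\ref{prop:unique} applies with $X=P$, $U=C_1$, $\tU=\tC_1$, and yields $\hat P\simeq\tP$.

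Next I would invoke the properties of the Kraft--Procesi variety from~\cite{kp:special}. In the classical types and in characteristic zero it is a smooth variety carrying an action of the finite group $F$, with quotient $P$; call the quotient map $\hat\rho$. Since $F$ is finite, $\hat\rho$ is a finite morphism. Over the open dense class $C_1\subset P$---which, by the Kraft--Procesi analysis, is exactly the locus whose preimage points have trivial $F$-stabilizer---the map $\hat\rho$ restricts to a (connected) principal $F$-bundle over $C_1$, and the same is true of $\tC_1=(\tC_1)^\circ=G/G^x_F$. The remaining point is to identify these two principal $F$-bundles over $C_1$ compatibly with their maps to $C_1$, i.e., to match their monodromy; granting that, the restriction $\hat\rho^{-1}(C_1)\to C_1$ becomes $\rho_1$ under the identification, so the observation of the previous paragraph gives $\hat P\simeq\tP$.

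The main obstacle is precisely this monodromy-matching step: one must know that the principal $F$-bundle cut out over $C_1$ by the Kraft--Procesi construction is the bundle $\tC_1$ defined here through the subgroup $G^x_F$, rather than merely \emph{some} principal $F$-bundle. Both bundles are pulled back from Lusztig's canonical quotient $\bar A(C_1)$ along the canonical projection onto the direct factor $F$; this is the content of Lusztig's type-independent description of $F$ and its action in~\cite{lus:notes} (see also~\cite{as,aa}), and it is the same input already used in Section~\ref{sect:tP}, e.g., in the proof of Lemma~\ref{lem:tP-strata-K}. Once the bundles are matched, everything else---smoothness giving the $S_2$ property, finiteness giving the codimension bound, and the final appeal to Proposition~\ref{prop:unique}---is routine.
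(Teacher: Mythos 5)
Your proof is correct and takes essentially the same route as the paper, which cites Theorem~\ref{thm:main} and Proposition~\ref{prop:unique} (via the observation immediately preceding the corollary). You make explicit the monodromy-matching step needed to see that the Kraft--Procesi cover of $C_1$ coincides with the bundle $\tC_1$, a point the paper leaves implicit in its appeal to Lusztig's type-independent identification of $F$.
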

\begin{proof}  This follows immediately from the theorem and
  Proposition~\ref{prop:unique}.
\end{proof}

\section{Normality of Special Pieces}
\label{sect:normal}

Recall that Conjecture~\ref{conj:lusztig} contains implicitly the
additional Conjecture~\ref{conj:normal} that all special pieces are normal.  In the
classical types in characteristic $0$, this statement follows from the
work of Kraft--Procesi~\cite{kp:special}; they show that each special
piece $P$ is the algebraic quotient of $\tP$ by $F$, so by
Proposition~\ref{prop:tP-fibers}, $P$ is normal.

In positive characteristic or for the exceptional types in
characteristic $0$, there is no uniform answer.  Of course, some
special pieces consist only of a single unipotent class, so those ones
are obviously normal (and even smooth).  In other cases, it is
known that the full closure of a special unipotent class in the
unipotent variety is normal.  Since a special piece is an open
subvariety of its closure, the normality of the closure implies the
normality of the special piece.  Normality of closures of unipotent
classes (or, more typically, nilpotent orbits) has been studied
extensively by a number of authors, so this technique gives
information about a large number of special pieces.  In this section,
we list the normality results that can be obtained in this way.

The following proposition summarizes the situation for classical groups.

\begin{prop}
Let $G$ be a simple algebraic group of classical type over an algebraically closed field $k$ of good characteristic.  Let $C_1$ be a special unipotent class, and let $P$ be the corresponding special piece.
\begin{enumerate}
\item If $\tchar k = 0$ or $G$ is of type $A_n$, then $P$ is normal.
\item $P = C_1$ if and only if $\bar A(C_1) = 1$.  In that case, of course, $P$ is normal.\label{it:triv}
\item If $G$ is of type $B_n$ and $C_1$ is the subregular class, then $P$ is normal.
\end{enumerate}
\end{prop}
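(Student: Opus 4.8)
I would handle the three parts in the order (2), (1), (3), since part (1) in type $A$ rests on part (2). For part (2), the plan is to use the bijection, recalled in the introduction, between parabolic subgroups of $F$ and the unipotent classes contained in $P$, under which the trivial parabolic corresponds to $C_1$. Thus $P = C_1$ exactly when $F$ has a unique parabolic subgroup, i.e.\ exactly when $F = 1$. Since $F$ is a direct factor of $\bar A(C_1)$ (see~\cite[\S3.1]{as}), the condition $\bar A(C_1) = 1$ forces $F = 1$; for the converse one invokes the fact that in the classical groups $F$ is \emph{all} of $\bar A(C_1)$, which can be extracted from Lusztig's type-independent description~\cite[Theorem~0.4]{lus:notes} (or from~\cite{kp:special}), so that $F = 1$ forces $\bar A(C_1) = 1$. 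The equivalence $P = C_1 \Leftrightarrow \bar A(C_1) = 1$ follows, and the final clause is the triviality that a single unipotent class is a homogeneous space, hence smooth, hence normal.

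For part (1): in type $A_n$ one has $\bar A(C_1) = 1$ for every unipotent class, so by part (2) the special piece $P = C_1$ is a single smooth class and is normal; this argument is insensitive to the characteristic. In characteristic $0$ (for any classical $G$) I would cite the theorem of Kraft--Procesi~\cite{kp:special} that each special piece in a classical group is the algebraic quotient of a smooth variety by $F$; the algebraic quotient of a smooth (or even merely normal) variety by a finite group is normal. Alternatively, in characteristic $0$ one can combine Proposition~\ref{prop:tP-fibers}, which exhibits $\bar P \simeq \tP/F$ as the algebraic quotient, with the identification of $\tP$ with the Kraft--Procesi variety coming from Proposition~\ref{prop:unique}, to conclude $P \simeq \bar P$ and hence $P$ normal.

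For part (3): the subregular unipotent class $C_1$ is special (e.g.\ by~\cite{spalt:classes},~\cite{lus:notes}), so the special piece $P$ containing it is defined and, as noted at the beginning of this section, is a dense open subscheme of $\overline{C_1}$; it therefore suffices to prove that $\overline{C_1}$ is normal. Now every minimal degeneration in the unipotent variety has codimension $2$, so $\overline{C_1} \smallsetminus C_1$ has codimension $2$ in $\overline{C_1}$ and $\overline{C_1}$ is regular in codimension $1$. By Serre's criterion it remains to check that $\overline{C_1}$ is $S_2$ --- equivalently, the relevant local rings being $2$-dimensional, Cohen--Macaulay --- at the generic point of the boundary orbit, and this can be tested on a transverse slice, which is a surface. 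By the known description of the subregular singularity in type $B_n$ (Slodowy) together with Kraft--Procesi's analysis of the minimal degenerations of classical Lie algebras, this transverse surface is a simple (Kleinian) singularity, hence normal; therefore $\overline{C_1}$, and so $P$, is normal.

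The main obstacle is the last step of part (3) in positive characteristic: the Kraft--Procesi normality results and the explicit minimal-degeneration computations are stated over $\mathbb{C}$, so to cover good positive characteristic one needs a characteristic-free treatment of the transverse singularity of $\overline{C_1}$ along its codimension-$2$ boundary --- equivalently, of the normality of the subregular class closure in type $B_n$ --- and this is where the substantive work, or the careful choice of reference, lies. By contrast, the reductions in parts (1) and (2) are essentially bookkeeping once the cited structural facts about $F$ and $\bar A(C_1)$ are in hand.
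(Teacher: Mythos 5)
Your parts (1) and (2) match the paper's argument in substance. For (1), the paper also reduces type $A_n$ to the fact that every class is special (so every special piece is a single smooth orbit) and cites Kraft--Procesi for characteristic $0$. For (2), the paper settles the nontrivial implication by citing~\cite[Theorem~2.1]{as} rather than by asserting $F = \bar A(C_1)$ in the classical types, but both routes rest on the same body of structural facts and lead to the same conclusion.

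Part (3) is where you have a genuine gap, and it matters: in characteristic $0$ the statement is already subsumed by part (1), so the entire content of (3) is the positive-characteristic case, which is precisely what your sketch leaves open. Moreover, the characteristic-$0$ sketch itself is not complete as written: Serre's criterion requires $S_2$ at \emph{every} point, and you only check it at the generic point of the codimension-$2$ boundary orbit. The closure $\overline{C_1}$ of a subregular class in type $B_n$ generally contains strata of codimension $> 2$ (e.g.\ in $B_2$ one has orbits of type $[2,2,1]$ and $0$ below the subregular $[3,1,1]$), and a transverse slice at the top boundary stratum says nothing about depth at those deeper points. The paper closes (3) by a single citation to Thomsen~\cite[\S 9]{thomsen}, whose list of nilpotent orbit closures that are normal in any good characteristic includes the subregular classes in type $B_n$; supplying that reference is exactly the ``careful choice of reference'' you identify as missing, and without it part (3) is unproved.
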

We remark that it is easy to determine whether $\bar A(C_1) = 1$ for a given special class, using the straightforward combinatorial descriptions of that group given in, say,~\cite{as} or~\cite{lus:notes}.
\begin{proof}
As we remarked above, in characteristic $0$, the result follows from the work of Kraft--Procesi~\cite{kp:special}.  In type $A_n$, every unipotent class is special, so every special piece consists of a single class.

Next, it is obvious that $P = C_1$ if $\bar A(C_1) = 1$; the other implication follows from~\cite[Theorem~2.1]{as}.

Finally, the subregular classes in type $B_n$ occur in Thomsen's list~\cite[\S 9]{thomsen} of classes known to have normal closure in any good characteristic.
\end{proof}

\begin{rmk}
Thomsen lists many more classes with normal closures in the classical types, including the subregular class in all types, but it happens that all other classes listed by him fall into case~\eqref{it:triv} of the proposition above.
\end{rmk}

\begin{table}
\begin{center}
\def\rr{\raggedright}
\begin{tabular}{|l|p{1.5in}|l|p{0.5in}|p{0.74in}|p{0.7in}@{}l|}
\rotatebox{90}{\hbox{Group}} & Smooth & 
Normal & 
\parbox[b]{0.5in}{\centering Normal in\break char. $0$} &
\parbox[b]{0.74in}{\centering Normal if BPS conj. is true} &
Unknown & \\
\hline
$G_2$ &
 $G_2$, $1$ & $G_2(a_1)$ & & & & \\
\hline
$F_4$ &
 \rr $F_4$, $F_4(a_1)$, $F_4(a_2)$, $C_3$, $B_3$, $\tilde A_2$, $A_2$,
$A_1+\tilde A_1$, $1$ & 
 & \rr $F_4(a_3)$, $\tilde A_1$ & & & \\
\hline
$E_6$ &
 \rr $E_6$, $E_6(a_1)$, $D_5$, $D_5(a_1)$, $A_4 + A_1$, $D_4$, $A_4$,
$A_3$, $A_2+2A_1$, $2A_2$, $A_2+A_1$, $2A_1$, $A_1$, $1$ &
 $E_6(a_3)$ & \rr $D_4(a_1)$, $A_2$ & & & \\
\hline
$E_7$ &
  \rr $E_7$, $E_7(a_1)$, $E_7(a_2)$, $E_6$, $E_6(a_1)$, $E_7(a_4)$,
$D_6(a_1)$, $D_5+A_1$, $A_6$, $D_5$, $D_5(a_1)+A_1$, $A_4+A_2$, $A_4+A_1$,
$(A_5)''$, $A_3+A_2+A_1$, $A_4$, $A_3+A_2$, $D_4$, $(A_3+A_1)''$,
$A_2+3A_1$, $2A_2$, $A_3$, $A_2+2A_1$, $(3A_1)''$, $2A_1$, $A_1$, $1$ & 
  $E_7(a_3)$ & &
  \rr $E_7(a_5)$, $E_6(a_3)$, $D_5(a_1)$,
  $D_4(a_1)$, $A_2+A_1$, $A_2$ &
  $D_4(a_1)+A_1$ & \\
\hline
$E_8$ &
  \rr $E_8$, $E_8(a_1)$, $E_8(a_2)$, $E_8(a_4)$, $E_8(b_4)$, $E_7(a_1)$,
$E_8(a_6)$, $D_7(a_1)$, $E_6(a_1)+A_1$, $D_7(a_2)$, $E_6$, $D_5+A_2$,
$E_6(a_1)$, $E_7(a_4)$, $A_6+A_1$, $A_6$, $D_5$, $D_4+A_2$, $A_4+A_2+A_1$,
$D_5(a_1)+A_1$, $A_4+A_2$, $A_4+A_1$, $A_4$, $A_3+A_2$, $D_4$, $A_3$,
$A_2+2A_1$, $2A_1$, $A_1$, $1$ &
  $E_8(a_3)$ & &
  \rr $E_8(a_5)$, $E_8(b_5)$, $E_8(b_6)$, $E_8(a_7)$, $A_4+2A_1$,
  $D_4(a_1)+A_2$, $D_4(a_1)+A_1$, $D_4(a_1)$, $2A_2$, $A_2+A_1$, $A_2$ &
  \rr $E_7(a_3)$, $D_6(a_1)$, $E_6(a_3)$, $D_5(a_1)$ & \\
\hline
\end{tabular}
\end{center}
\caption{Normality of special pieces in the exceptional types}
\label{tbl:exc}
\end{table}

In Table~\ref{tbl:exc}, we indicate what is known for special pieces
in the exceptional groups.  We name a special piece by giving the
Bala--Carter label of the special class it contains.  The column
labelled ``Smooth'' lists all special pieces that contain only a
single class (this is easily deduced from, say, the partial order
diagram of unipotent classes in~\cite[Chapter~13]{carter}).  Among the
remaining special pieces, those with normal closure in any good
characteristic (following Thomsen~\cite{thomsen}) are listed in the
next column, and those known to have normal closure only in
characteristic $0$ (following Broer~\cite{broer:f4} and
Sommers~\cite{sommers:e6}) appear in the column after that.

Before explaining the last two columns, we remark that in types $E_7$
and $E_8$, the normality question has not been answered for all
nilpotent orbit closures, even in characteristic $0$.  However, a
number of specific orbits are known to have nonnormal closures, and
Broer, together with Panyushev and Sommers, has conjectured that all
remaining orbits have normal closures (see the Remarks at the end
of~\cite[\S 7.8]{broer:decomp}).  Sommers has verified this conjecture
in a large number of cases~\cite{sommers:pc}.  If the
Broer--Panyushev--Sommers conjecture is true, it will imply the
normality of a number of special pieces, listed in the penultimate
column.

Finally, the last column lists special pieces whose closures are known
to be nonnormal.  To establish normality for these special pieces,
some additional technique will be required.

The following proposition summarizes the information that can found in
the table.

\begin{prop}
  Let $G$ be a simple algebraic group of exceptional type over an
  algebraically closed field $k$ of good characteristic.
\begin{enumerate}
\item If $G$ is of type $G_2$, all special pieces are normal.
\item If $G$ is of type $F_4$ or $E_6$ and $\tchar k = 0$, all special
  pieces are normal.  If $\tchar k > 0$, all but two special pieces
  are known to be normal.
\item If $G$ is of type $E_7$ (resp.~$E_8$), then all but seven
  (resp.~fifteen) special pieces are known to be normal.  If $\tchar k
  = 0$ and the Broer--Panyushev--Sommers conjecture holds, then all
  but one (resp.~four) special pieces will be known to be normal. \qed
\end{enumerate}
\end{prop}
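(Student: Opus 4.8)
The plan is to deduce the proposition entirely from Table~\ref{tbl:exc} together with two elementary reductions that have already been recorded in the surrounding discussion. First, a special piece consisting of a single unipotent class is smooth, hence normal; these are exactly the entries of the ``Smooth'' column. Second, a special piece $P$ is an open subvariety of the closure $\overline{C_1}$ of the special class it contains, and an open subscheme of a normal scheme is normal; thus every piece whose special class has normal closure is itself normal, over whatever range of characteristics that normality is known. This applies to the entries of the ``Normal'' column (all good characteristics, via Thomsen~\cite{thomsen}), to those of the ``Normal in char.\ $0$'' column (characteristic $0$, via Broer~\cite{broer:f4} and Sommers~\cite{sommers:e6}), and conditionally to the entries of the ``Normal if BPS conj.\ is true'' column. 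Granting these reductions, the remaining task is purely a matter of reading off the column lengths in each row; I do not anticipate any genuine obstacle beyond careful bookkeeping, since the substantive input --- which orbit closures are actually normal --- lives in the cited references.

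Carrying this out for the easy types: the $G_2$ row contains three special pieces in all (two ``Smooth'', one ``Normal''), every one normal, which gives part (1). The $F_4$ row has nine ``Smooth'' entries, no ``Normal'' entries, two ``Normal in char.\ $0$'' entries ($F_4(a_3)$ and $\tilde A_1$), and nothing in the last two columns; hence all eleven special pieces are normal when $\tchar k = 0$, while in arbitrary good characteristic only the nine smooth ones are known to be normal --- all but two. The $E_6$ row has fourteen ``Smooth'' entries, one ``Normal'' entry ($E_6(a_3)$), two ``Normal in char.\ $0$'' entries ($D_4(a_1)$ and $A_2$), and nothing further; so all seventeen are normal in characteristic $0$, and fifteen (the smooth ones together with $E_6(a_3)$) are normal in any good characteristic. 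This is part (2).

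For part (3) I would run the same count in the $E_7$ and $E_8$ rows. In type $E_7$ the five columns contain $27$, $1$, $0$, $6$, and $1$ entries, for $35$ special pieces in all; the first two columns give the $28$ pieces whose normality is unconditionally known --- ``all but seven'' --- and if $\tchar k = 0$ and the Broer--Panyushev--Sommers conjecture holds, the ``Normal if BPS'' column contributes six more, leaving only $D_4(a_1)+A_1$ open, i.e.\ ``all but one''. In type $E_8$ the columns contain $30$, $1$, $0$, $11$, and $4$ entries, for $46$ pieces in all; $31$ are unconditionally known to be normal --- ``all but fifteen'' --- and under the same hypotheses eleven more become known, leaving $E_7(a_3)$, $D_6(a_1)$, $E_6(a_3)$, $D_5(a_1)$ open, i.e.\ ``all but four''. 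To close the argument I would note that Table~\ref{tbl:exc} is exhaustive: by Spaltenstein's theorem every unipotent class lies in a unique special piece, and the special classes are classified, so the five columns genuinely partition the set of special pieces in each type. With that, all three parts of the proposition follow.
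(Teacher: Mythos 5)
Your proof is correct and is essentially the argument the paper intends; the paper itself gives no proof beyond pointing to Table~\ref{tbl:exc}, since the proposition is advertised as a summary of the table. Your counts in all five rows (column totals $2{+}1$ in $G_2$; $9{+}0{+}2$ in $F_4$; $14{+}1{+}2$ in $E_6$; $27{+}1{+}0{+}6{+}1$ in $E_7$; $30{+}1{+}0{+}11{+}4$ in $E_8$) agree with the table, and the two reductions you invoke (single-class pieces are smooth, hence normal; a special piece is open in the closure of its special class, so normality of the closure yields normality of the piece) are exactly the ones stated in the paragraphs of Section~\ref{sect:normal} preceding the proposition.
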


\bibliographystyle{pnaplain}

\begin{thebibliography}{99}

\bibitem{achar:duality}
P. Achar, {\it An order-reversing duality map for conjugacy classes in
Lusztig's canonical quotient}, Transform. Groups. {\bf 8} (2003), 107--145.

\bibitem{aa}
P. Achar and A.-M. Aubert, {\it Supports unipotents de faisceaux caract\`eres}, J. Inst. Math. Jussieu {\bf 6} (2007), 173--207.

\bibitem{as}
P. Achar and D. Sage, {\it On special pieces, the Springer correspondence, and unipotent characters}, Amer. J. Math, to appear.

\bibitem{bbd}
A. Beilinson, J. Bernstein, P. Deligne, {\it Faisceaux pervers}, in {Analyse et topologie sur les espaces singuliers, I} (Luminy, 1981), Ast\'erisque {\bf 100} (1982).

\bibitem{bs:green}
W. M. Benyon and N. Spaltenstein, {\it Green functions of finite Chevalley
groups of type $E_n$ $(n = 6,7,8)$}, J.~Algebra {\bf 88} (1984), 584--614.

\bibitem{bl}
J. Bernstein and V. Lunts, {\it Equivariant sheaves and functors}, Lecture
Notes in Mathematics, no. 1578, Springer-Verlag, 1994.

\bibitem{bez:pc}
R. Bezrukavnikov, {\it Perverse coherent sheaves (after Deligne)}, {\tt
arXiv:math.AG/0005152}.

\bibitem{brodmann}
M. Brodmann, {\it A few remarks on blowing-up and connectedness}, J. Reine
Angew. Math. {\bf 370} (1986), 52--60.

\bibitem{broer:lb}
A. Broer, {\it Normality of some nilpotent varieties and cohomology of line bundles on the cotangent bundle of the flag variety}, in: {\it Lie Theory and Geometry, Boston}, in: Progr. Math., Vol. 123, Birkh\"auser, Boston, 1994, 1--19.

\bibitem{broer:f4}
A. Broer, {\it Normal nilpotent varieties in $F_4$}, J. Algebra {\bf 207} (1998), 427--448.

\bibitem{broer:decomp}
A. Broer, {\it Decomposition varieties in semisimple Lie algebras}, Canad. J. Math. {\bf 50}  (1998), 929--971.

\bibitem{bh:cm}
W. Bruns and J. Herzog, {\it Cohen--Macaulay rings}, Cambridge Studies in Advanced Mathematics, no.~39, Cambridge University Press, Cambridge, 1993.

\bibitem{carter}
R. W. Carter, {\it Finite groups of Lie type: Conjugacy classes and complex characters}, Pure and Applied Mathematics, John Wiley \& Sons, New York, 1985.

\bibitem{eh:schemes}
D. Eisenbud and J. Harris, {\it The geometry of schemes}, Graduate Texts
in Mathematics no.~197, Springer-Verlag, New York, 2000.

\bibitem{faltings}
G. Faltings, {\it \"Uber Macaulayfizierung}, Math. Ann. {\bf 238} (1978),
175--192.

\bibitem{ega42}
A.~Grothendieck, {\it \'El\'ements de g\'eom\'etrie alg\'ebrique. IV. \'Etude locale des sch\'emas et des morphismes de sch\'emas. II}, Inst. Hautes \'Etudes Sci. Publ. Math., No.~24, 1965.

\bibitem{hartshorne}
R. Hartshorne, {\it Residues and duality}, Lecture Notes in Mathematics,
no.~20, Springer-Verlag, 1966.

\bibitem{harts:ag}
R. Hartshorne, {\it Algebraic geometry}, Graduate Texts in Mathematics, no.~52, Springer-Verlag, New York--Heidelberg, 1977.

\bibitem{hinich}
V. Hinich, {\it On the singularities of nilpotent orbits}, Israel J.
Math.~{\bf 73} (1991), 297--308.

\bibitem{kp:special}
H. Kraft and C. Procesi, {\it A special decomposition of the nilpotent
cone of a classical Lie algebra}, Orbites unipotentes et
repr\'esentations, III, Ast\'erisque No.~173--174 (1989), 271--279.

\bibitem{lus:special}
G. Lusztig, {\it A class of irreducible representations of a Weyl group},
Nederl. Akad. Wetensch. Indag. Math. {\bf 41} (1979), 323--335.

\bibitem{lus:green}
G. Lusztig, {\it Green polynomials and singularities of unipotent classes},
Adv. Math. {\bf 42} (1981), 169--178. 

\bibitem{lus:book}
G. Lusztig, {Characters of reductive groups over a finite field}, Ann. of
Math. Stud. 107, Princeton University Press, Princeton, 1984.

\bibitem{lus:notes}
G. Lusztig, {\it Notes on unipotent classes}, Asian J. Math. {\bf 1}
(1997), 194--207.

\bibitem{kawasaki}
T. Kawasaki, {\it On Macaulayfication of Noetherian schemes}, Trans. Amer.
Math. Soc. {\bf 352} (2000), 2517--2552.

\bibitem{panyushev}
D. I. Panyushev, {\it Rationality of singularities and the Gorenstein property of nilpotent orbits}, Funct.~Anal.~Appl. {\bf 25} (1991), 225--226.

\bibitem{peskine}
C. Peskine, {\it An algebraic introduction to complex projective geometry.
I. Commutative algebra}, Cambridge Stud. Adv. Math. 47, Cambridge
University Press, Cambridge, 1996.

\bibitem{procesi}
C. Procesi, personal communication, 2006.

\bibitem{richardson}
R. W. Richardson {\it Derivatives of invariant polynomials on a semisimple Lie algebra}, in {\it Miniconference on harmonic analysis and operator algebras (Canberra, 1987)}, Proc. Centre Math. Anal. Austral. Nat. Univ., 15, Austral. Nat. Univ., Canberra, 1987, 228--241.

\bibitem{schenzel}
P. Schenzel, {\it On birational Macaulayfications and canonical
Cohen--Macaulay modules}, J. Algebra {\bf 275} (2004), 751--770.

\bibitem{shoji:green}
T. Shoji, {\it On the Green polynomials of a Chevalley group of type
$F_4$}, Comm. Alg. {\bf 10} (1982), 505--543.

\bibitem{sommers:e6}
E. Sommers, {\it Normality of nilpotent varieties in $E_6$}, J. Algebra {\bf 270} (2003), 288--306.

\bibitem{sommers:pc}
E. Sommers, personal communication, 2007.

\bibitem{spalt:classes}
N. Spaltenstein, {\it Classes unipotentes et sous-groupes de Borel},
Lecture Notes in Math. 946, Springer-Verlag, 1982.

\bibitem{thomsen}
J. F. Thomsen, {\it Normality of certain nilpotent varieties in positive characteristic}, J. Algebra {\bf 227} (2000), 595--613.
\end{thebibliography}

\end{document}